\definecolor{ggr}{rgb}{0.05,0.47,0.19}
\newtheorem{Theorem}{Theorem}[section]
\newtheorem{Proposition}[Theorem]{Proposition}
\newtheorem{Lemma}[Theorem]{Lemma}
\newtheorem{Corollary}[Theorem]{Corollary}
\newtheorem*{Claim}{Claim}
\theoremstyle{definition}
\newtheorem{Definition}[Theorem]{Definition}
\newtheorem{Question}[Theorem]{Question}
\newtheorem{Example}[Theorem]{Example}
\newtheorem{Remark}[Theorem]{Remark}
\DeclareMathOperator{\dom}{\mathrm{dom}}
\DeclareMathOperator{\ran}{\mathrm{ran}}
\DeclareMathOperator{\id}{\mathrm{id}}
\DeclareMathOperator{\IN}{\mathrm{IN}}
\DeclareMathOperator{\OUT}{\mathrm{OUT}}
\newcommand{\andd}{\wedge}
\newcommand{\la}{\langle}
\newcommand{\ra}{\rangle}
\newcommand{\da}{\!\downarrow}
\newcommand{\ua}{\!\uparrow}
\newcommand{\imp}{\rightarrow}
\newcommand{\biimp}{\leftrightarrow}
\newcommand{\smf}{\smallfrown}
\newcommand{\leqT}{\leq_\mathrm{T}}
\newcommand{\geqT}{\geq_\mathrm{T}}
\newcommand{\leqW}{\leq_\mathrm{W}}
\newcommand{\leW}{<_\mathrm{W}}
\newcommand{\nleqW}{\nleq_\mathrm{W}}
\newcommand{\equivW}{\equiv_\mathrm{W}}
\newcommand{\leqsW}{\leq_\mathrm{sW}}
\newcommand{\lesW}{<_\mathrm{sW}}
\newcommand{\nleqsW}{\nleq_\mathrm{sW}}
\newcommand{\equivsW}{\equiv_\mathrm{sW}}
\newcommand{\MP}[1]{\mathcal{#1}}
\newcommand{\str}{\mathrm{str}}
\newcommand{\alt}{\mathrm{alt}}
	\def\paul#1{\sethlcolor{yellow}\hl{#1}}
\def\MLR{\ensuremath{\mathrm{MLR}}\xspace}
\def\rd{\ensuremath{\mathrm{rd}}\xspace}
\def\N{\ensuremath{\mathbb{N}}\xspace}
\def\C{\ensuremath{\mathrm{C}}\xspace}
\def\UC{\ensuremath{\mathrm{UC}}\xspace}
\def\N{\mathbb{N}}
\def\C{\mathrm{C}}
\def\dom{\mathrm{dom}}
\def\ran{\mathrm{ran}}
\def\RD{\ensuremath{\mathsf{RD}}\xspace}
\def\LAY{\ensuremath{\mathsf{LAY}}\xspace}
\let\temp\phi
\let\phi\varphi
\let\varphi\temp
\title{Universality, optimality, and randomness deficiency%
%\\{\scriptsize (Draft version from \MakeUppercase{\today})}
}
\author{Rupert H\" olzl}
\address{Department of Mathematics, Faculty of Science, National University of Singapore, Block S17, 10~Lower Kent Ridge Road, Singapore 119076, Republic of Singapore}
\email{r@hoelzl.fr}
\urladdr{http://hoelzl.fr}
\author{Paul Shafer}
\address{Department of Mathematics,
Ghent University,
Krijgslaan 281, S22,
9000 Ghent,
Belgium}
\email{paul.shafer@ugent.be}
\urladdr{http://cage.ugent.be/~pshafer/}
\thanks{Rupert H\"olzl was supported by a Feodor Lynen postdoctoral research fellowship of the Alexander von Humboldt Foundation and is supported by the Ministry of Education of Singapore through grant R146-000-184-112 (MOE2013-T2-1-062).  Paul Shafer is an FWO Pegasus Long Postdoctoral Fellow.  He was also supported by the Fondation Sciences Math\'ematiques de Paris, and he received travel support from the Bayerisch-Französisches Hochschulzentrum/Centre de Coopération Universitaire Franco-Bavarois.}
\date{\today}
\begin{document}
\begin{abstract}
A Martin-L\"of test $\MP U$ is \emph{universal} if it captures all non-Martin-L\"of random sequences, and it is \emph{optimal} if for every ML-test $\MP V$ there is a $c \in \omega$ such that $\forall n(\MP {V}_{n+c} \subseteq \MP{U}_n)$.  We study the computational differences between universal and optimal ML-tests as well as the effects that these differences have on both the notion of layerwise computability and the Weihrauch degree of $\LAY$, the function that produces a bound for a given Martin-L\"of random sequence's randomness deficiency.  We prove several robustness and idempotence results concerning the Weihrauch degree of $\LAY$, and we show that layerwise computability is more restrictive than Weihrauch reducibility to $\LAY$.
Along similar lines we also study the principle $\RD$, a variant of $\LAY$ outputting the precise randomness deficiency of sequences instead of only an upper bound as $\LAY$.
\end{abstract}

\maketitle

\section{Introduction}

Hoyrup and Rojas~\cite{MR2545900} fix a universal Martin-L\"of test and define a function to be \emph{layerwise computable} if it is computable on Martin-L\"of random inputs when given what essentially amounts to a bound for the input's randomness deficiency as advice.  The ML-test $\MP U = (\MP{U}_n)_{n \in \omega}$ that Hoyrup and Rojas use to define layerwise computability has the special property that for every ML-test $\MP V = (\MP{V}_n)_{n \in \omega}$ there is a $c \in \omega$ such that $\forall n(\MP {V}_{n+c} \subseteq \MP{U}_n)$.  Miyabe~\cite{MR2876979} studies these special, so called \emph{optimal}, tests.  If $\MP U$ and $\MP V$ are two optimal ML-tests, then it is straightforward to see that the notion of layerwise computability is the same when defined via $\MP U$ as it is when defined via $\MP V$.  However, the following example (essentially due to Miyabe, though with a slightly different proof) shows that there are universal ML-tests that are not optimal.

\begin{Example}\label{ex-UniversalNotOptimal}
Let $\MP U$ be any universal ML-test.  Define a test $\MP V$ via $\MP V_n=\bigcap_{i\leq n} \MP U_n$ for all $n \in \omega$, thus making the test a descending chain.  $\MP V$ is also a universal ML-test, so $\lambda(\MP V_n)\not= 0$ for all $n$.  On the other hand $\lim_n \lambda(\MP V_n)=0$.  Therefore there are infinitely many $n$ with $\lambda(\MP V_{n+1})<\lambda(\MP V_n)$.  Assume for the sake of argument that there are infinitely many $n$ with $\lambda(\MP V_{2n+1})<\lambda(\MP V_{2n})$, and let $I$ be the set of these $n$.   (The case in which there are infinitely many $n$ with $\lambda(\MP V_{2n})<\lambda(\MP V_{2n-1})$ is analogous.)

Define an ML-test $\MP W$ via $\MP W_n=\MP V_{2n+1}$ for all $n$.  Clearly $\MP W$ meets the effectivity and measure conditions for being an ML-test.  As $\bigcap_{n \in \omega} \MP{W}_n = \bigcap_{n \in \omega} \MP{V}_n = \bigcap_{n \in \omega} \MP{U}_n$, $\MP W$ is also a universal ML-test.

Fix any $c \in \omega$.  For any $n \in I$ with $n \geq c$, we have that
\begin{align*}
\MP U_{n+c} \supseteq \MP V_{n+c} \supseteq \MP V_{2n} \supsetneq \MP V_{2n+1} = \MP W_n,
\end{align*}
which implies that $\MP U_{n+c} \not \subseteq \MP W_n$. 

That is, for every $c$ there are infinitely many $n$ with $\MP U_{n+c} \not \subseteq \MP W_n$. Thus $\MP W$ is a universal ML-test that is not optimal.\qed
\end{Example}

Miyabe~\cite{MR2876979} obtains a compelling computational difference between optimal ML-tests and universal ML-tests:  by a result of Merkle, Mihailovi\'c, and Slaman~\cite{MR2256638}, there is a universal ML-test $\MP U$ and a left-c.e.\ real $\alpha$ such that $\forall n(\lambda(\MP{U}_n) = 2^{-n}\alpha)$.  Miyabe proves that no optimal ML-test is of this form.

This article presents further differences between optimal ML-tests and universal ML-tests.  If $\MP U$ is an optimal ML-test, then for every ML-test  there trivially is a function $f$ (in fact, a computable function $f$) such that $\forall n(\MP{V}_{f(n)} \subseteq \MP{U}_n)$.  In Section~\ref{sec-NonOptimal}, we show that if $\MP U$ is universal but not optimal, then such an $f$ need not exist; and furthermore that there exist universal ML-tests $\MP U$ and $\MP V$ such that functions $f$ as above do indeed exist, but such that all of these $f$ are difficult to compute.

In Section~\ref{sec-LWC}, we ask if the notion of layerwise computability remains the same if we allow it to be defined using \emph{any}, possibly non-optimal, universal ML-test.  The answer is negative.  It is possible to construct universal ML-tests that distort the randomness deficiencies assigned by a given ML-test quite chaotically.  Likewise, we study the difference between the class of layerwise computable functions and the class of \emph{exactly layerwise computable} functions, where we say that a function on MLR is exactly layerwise computable if it is uniformly computable given an ML-random sequence and its randomness deficiency (not merely an upper bound for its randomness deficiency).  We show that both classes are different by identifying a function that is exactly layerwise computable but not layerwise computable.
 
Brattka, Gherardi and H\"olzl~\cite{BrHoeGh13} define and study the Weihrauch degree of $\LAY$, a function representing the mathematical task of determining an upper bound for the randomness deficiency of a given $\MLR$ sequence.  In particular, they investigate how $\LAY$ interacts with $\mathsf{MLR}$---the principle that generates sequences that are ML-random relative to its input---and the principle $\C_\N$---the choice principle on natural numbers.  We continue the study of $\LAY$ in Section~\ref{sec-Weihrauch}, where we show that, unlike the notion of layerwise computability, the Weihrauch degree of $\LAY$ does not depend on the choice of the universal ML-test used to define it.  Moreover, we show that, up to Weihrauch degree, the problem of exactly determining a ML-random sequence's randomness deficiency is equivalent to merely determining an upper bound for its randomness deficiency.  We show that the Weihrauch degree of $\LAY$ enjoys several idempotence properties, and we investigate the complexity of sets that can be reduced to $\LAY$.

Finally, in Section~\ref{sec-LWCvsLAY}, we compare layerwise computability with Weihrauch-reducibility to $\LAY$.  The results express that Weihrauch-reducibility to $\LAY$ encompasses a wider class of functions than layerwise computability.

Several of the results in this article, particularly in Section~\ref{sec-Weihrauch} and Section~\ref{sec-LWCvsLAY}, were obtained independently by Davie, Fouch\'e, and Pauly~\cite{PaulyPC}.

\section{Preliminaries}

\subsection{Computability theory and algorithmic randomness}
We typically follow Downey and Hirschfeldt~\cite{DH10} and Nies~\cite{Nies:book}, the standard references for algorithmic randomness.  $2^\omega$ is Cantor space, $2^s$ is the set of binary strings of length $s$, and $2^{<\omega} = \bigcup_{s \in \omega}2^s$ is the set of finite binary strings.  For $\sigma, \tau \in 2^{<\omega}$, we write $\sigma \preceq \tau$ if $\tau$ is an extension of $\sigma$.  We write $\sigma \prec \tau$ if this extension is strict.  For $\sigma \in 2^{<\omega}$ and $X \in 2^\omega$, we write $\sigma \prec X$ if $\sigma$ is an initial segment of $X$.  For $\sigma \in 2^{<\omega}$, $[\sigma]$ is the set $\{X \in 2^\omega : \sigma \prec X\}$.  For a set $U \subseteq 2^{<\omega}$, $[U] = \bigcup_{\sigma \in U}[\sigma]$ is the open subset of $2^\omega$ determined by $U$.  For a tree $T\subseteq 2^{<\omega}$, $[T]$ is the set of infinite paths through $T$.  One makes the analogous definitions concerning the topology on Baire space (i.e., on $\omega^\omega$), though our topological needs are almost exclusively confined to~$2^\omega$.  Lebesgue measure on Cantor space is denoted by $\lambda$.  As usual, $\langle \cdot, \cdot \rangle\colon \omega^2 \rightarrow \omega$ denotes the canonical computable and computably invertible pairing function.  We also use $\langle \cdot, \cdot \rangle$ to denote pairs of elements from $2^\omega$ or $\omega^\omega$, coded in the usual way.

$(\Phi_i)_{i \in \omega}$ is an effective list of all Turing functionals.  For a Turing functional $\Phi$, $\Phi(f)(n)$ denotes the output of $\Phi$ on input $n$ when equipped with oracle $f$, should the computation converge.  We suppress the `$(n)$' and write $\Phi(f)$ to denote the function computed by $\Phi$ when equipped with oracle $f$.  We sometimes suppress the `$(f)$' and write $\Phi(n)$ when $f$ is understood to be the identically $0$ function.  Subsets of $\omega$ are identified with their characteristic functions, and, for the purposes of supplying a single number as an oracle, each $n \in \omega$ is identified with the corresponding singleton $\{n\}$.

An open set $\MP{U}_0 \subseteq 2^\omega$ is \emph{effectively open} (or  \emph{c.e.})\ if the set $\{\sigma \in 2^{<\omega} : [\sigma] \subseteq \MP{U}_0\}$ is c.e.  A sequence of open sets $(\MP{U}_i)_{i \in \omega}$ is \emph{uniformly effectively open} (or \emph{uniformly c.e.})\ if the set $\{\la i, \sigma \ra : i \in \omega \andd \sigma \in 2^{<\omega} \andd [\sigma] \subseteq \MP{U}_i\}$ is c.e.

\begin{Definition}{\ }
\begin{itemize}
\item A \emph{Martin-L\"of test} is a uniformly c.e.\ sequence $\MP V = (\MP{V}_i)_{i \in \omega}$ of subsets of $2^\omega$ such that $\lambda(\MP{V}_i)\leq 2^{-i}$ for all $i$.

\item A Martin-L\"of test $\MP V$ is \emph{nested} if $\forall i(\MP{V}_{i+1} \subseteq \MP{V}_i)$.

\item A Martin-L\"of test $\MP U$ is \emph{universal} if, for every Martin-L\"of test $\MP V$, $\bigcap_{i \in \omega} \MP{V}_i \subseteq \bigcap_{i \in \omega} \MP{U}_i$.

\item A Martin-L\"of test $\MP U$ is \emph{optimal} if, for every Martin-L\"of test $\MP V$, $\exists c \forall i (\MP{V}_{i+c} \subseteq \MP{U}_i)$.

\item A sequence $X \in 2^\omega$ is \emph{Martin-L\"of random} (written $X \in \MLR$) if there is no ML-test $\MP V$ with $X \in \bigcap_{i \in \omega} \MP{V}_i$.
\end{itemize}
\end{Definition}

Every optimal ML-test is universal, and if $\MP U$ is a universal ML-test, then $X \in 2^\omega$ is ML-random if and only if $X \notin \bigcap_{i \in \omega}\MP{U}_i$.  Martin-L\"of~\cite{martin1966definition} defined the notions of ML-test and ML-randomness and proved that universal, indeed optimal, ML-tests exist (see also~\cite[Theorem 6.2.5]{DH10}).  In the same paper, Martin-L\"of also introduced the concept of \emph{randomness deficiency}, which can be thought of as a measure of how long it takes before a random sequence actually begins to look random.  That is, since the randomness of a sequence according to Martin-Löf's definition does not change if a finite initial segment of that sequence is changed, there are random sequences which do look extremely non-random on long initial segments, e.g., they might begin with $0^n$ for large $n$. The intuition is that the greater the randomness deficiency, the longer is this finite initial segment of non-randomness.

\begin{Definition}
If $X \in \MLR$ and $\MP U$ is a universal ML-test, then the \emph{randomness deficiency of $X$ relative to $\MP U$} is $\rd_{\MP U}(X) = \min\{i : X \notin \MP{U}_i\}$.
\end{Definition}
When the universal ML-test $\MP U$ is clear from context, we sometimes write $\rd$ in place of $\rd_{\MP U}$.

In a few places we make use of effective reals.  The following material and much more can be found in \cite[Chapter~5]{DH10}.

\begin{Definition}
Let $\alpha \in [0,1]$.  Then
\begin{itemize}
\item $\alpha$ is \emph{computable} if $\{q \in \mathbb{Q} : q < \alpha\}$ is computable,
\item $\alpha$ is \emph{left-c.e.}\ if $\{q \in \mathbb{Q} : q < \alpha\}$ is c.e., and
\item $\alpha$ is \emph{right-c.e.}\ if $\{q \in \mathbb{Q} : q > \alpha\}$ is c.e.
\end{itemize}
An $\alpha \in [0,1]$ is computable if and only if it is both left-c.e.\ and right-c.e.\ if and only if there is a computable sequence of rationals $(q_n)_{n \in \omega}$ such that $\forall n(|\alpha - q_n| < 2^{-n})$.  An $\alpha \in [0,1]$ is left-c.e.\ if and only if there is a computable sequence of rationals that is increasing and converges to $\alpha$.  An $\alpha \in [0,1]$ is right-c.e.\ if and only if $1-\alpha$ is left-c.e. if and only if there is a computable sequence of rationals that is decreasing and converges to $\alpha$.  The measure of an effectively open subset of $2^\omega$ is a left-c.e.\ real, and the measure of its complement in $2^\omega$ is a right-c.e.\ real.

\end{Definition}

\subsection{Layerwise computability}
Hoyrup and Rojas~\cite{MR2545900} introduced the concept of \emph{layerwise computability} which captures functions that, while not necessarily computable, are computable on $\MLR$ when given advice that essentially corresponds to an upper bound for the randomness deficiency of the input sequence.  They then employ this notion in the study of Birkhoff's ergodic theorem and Brownian motion, to give two examples~\cites{hoyrup2013computability, hoyrup2009applications}.  Hoyrup and Rojas define layerwise computability in a general computable probability space, but we present the definition only for Lebesgue measure on Cantor space for simplicity and because the phenomena we wish to study are present in this context.

\begin{Definition}[Hoyrup and Rojas% 
%[Definition~3.2.1]
~\cite{MR2545900}] 
A set $\MP A \subseteq 2^\omega$ is \emph{effectively $\lambda$-measurable} if there are sequences of uniformly effectively open sets $(\MP{U}_i)_{i \in \omega}$ and $(\MP{V}_i)_{i \in \omega}$ such that $2^\omega \setminus \MP{V}_i \subseteq \MP A \subseteq \MP{U}_i$ and $\lambda(\MP{U}_i \cap \MP{V}_i) \leq 2^{-i}$ for all $i \in \omega$.
\end{Definition}

\begin{Definition}[Hoyrup and Rojas \cite{MR2545900}]
Let $\MP W$ be a universal ML-test.
\begin{itemize}
\item A set $\MP A \subseteq 2^\omega$ is \emph{$\MP W$-layerwise semi-decidable} if there is a sequence of uniformly effectively open sets $(\MP{U}_i)_{i \in \omega}$ such that $\MP A \cap (2^\omega \setminus \MP{W}_i) = \MP{U}_i \cap (2^\omega \setminus \MP{W}_i)$ for all $i \in \omega$.

\item A set $\MP A \subseteq 2^\omega$ is \emph{$\MP W$-layerwise decidable} if both $\MP A$ and $2^\omega \setminus \MP A$ are $\MP W$-layerwise semi-decidable.

\item A function $f \colon 2^\omega \imp 2^\omega$ is \emph{$\MP W$-layerwise computable} if there is a Turing functional $\Phi$ such that $(\forall i)(\forall X \in 2^\omega \setminus \MP{W}_i)(\Phi(\la X,i \ra) = f(X))$.  
\end{itemize}
\end{Definition}

\begin{Remark}
When considering functions $f \colon 2^\omega \imp \omega$ (such as characteristic functions of subsets of $2^\omega$ or randomness deficiency functions), it is equivalent to define $f$ as being $\MP W$-layerwise computable if there is a Turing functional $\Phi$ such that $(\forall i)(\forall X \in 2^\omega \setminus \MP{W}_i)(\Phi(X)(i) = f(X))$. 
\end{Remark}

For a universal ML-test $\MP W$, it is straightforward to show that a set $\MP A \subseteq 2^\omega$ is $\MP W$-layerwise decidable if and only if its characteristic function is $\MP W$-layerwise computable.  Furthermore, effective $\lambda$-measurability and layerwise decidability coincide for optimal ML-tests.
\begin{Theorem}[Hoyrup and Rojas~\cite{MR2545900}%
%~Theorem~4.2.1(2)
]\label{thm-H&R}
Let $\MP U$ be an optimal ML-test, and let $\MP A \subseteq 2^\omega$.  Then $\MP A$ is effectively $\lambda$-measurable if and only if $\MP A$ is $\MP U$-layerwise decidable.
\end{Theorem}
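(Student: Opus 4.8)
The plan is to prove the two implications separately. The forward direction---effective $\lambda$-measurability implies $\MP U$-layerwise decidability---is where optimality of $\MP U$ does the real work; the backward direction is a routine combination of the layerwise witnesses with the layers $\MP{U}_i$ themselves, and it in fact goes through for an arbitrary ML-test $\MP U$.

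For the forward direction I would start from uniformly effectively open sequences $(\MP{G}_i)_{i\in\omega}$ and $(\MP{H}_i)_{i\in\omega}$ with $2^\omega \setminus \MP{H}_i \subseteq \MP A \subseteq \MP{G}_i$ and $\lambda(\MP{G}_i \cap \MP{H}_i) \leq 2^{-i}$ witnessing that $\MP A$ is effectively $\lambda$-measurable. The key observation is that $(\MP{G}_i \cap \MP{H}_i)_{i\in\omega}$ is itself an ML-test, so optimality of $\MP U$ supplies a constant $c$ with $\MP{G}_{i+c} \cap \MP{H}_{i+c} \subseteq \MP{U}_i$ for all $i$. I then claim $\MP A \cap (2^\omega \setminus \MP{U}_i) = \MP{G}_{i+c} \cap (2^\omega \setminus \MP{U}_i)$ for every $i$: the inclusion from left to right is immediate because $\MP A \subseteq \MP{G}_{i+c}$; for the reverse inclusion, if $X \in \MP{G}_{i+c}$ were not in $\MP A$ then $X \in \MP{H}_{i+c}$ (since $2^\omega \setminus \MP{H}_{i+c} \subseteq \MP A$), whence $X \in \MP{G}_{i+c} \cap \MP{H}_{i+c} \subseteq \MP{U}_i$, contradicting $X \notin \MP{U}_i$. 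As $(\MP{G}_{i+c})_{i\in\omega}$ is uniformly effectively open, this exhibits $\MP A$ as $\MP U$-layerwise semi-decidable. Running the identical argument on $2^\omega \setminus \MP A$---which is effectively $\lambda$-measurable with $(\MP{H}_i)_{i\in\omega}$ and $(\MP{G}_i)_{i\in\omega}$ in the two roles, using the same ML-test $(\MP{G}_i \cap \MP{H}_i)_{i\in\omega}$ and hence the same $c$---shows that $2^\omega \setminus \MP A$ is $\MP U$-layerwise semi-decidable via $(\MP{H}_{i+c})_{i\in\omega}$, so $\MP A$ is $\MP U$-layerwise decidable.

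For the backward direction, given uniformly effectively open $(\MP{S}_i)_{i\in\omega}$ and $(\MP{T}_i)_{i\in\omega}$ with $\MP A \cap (2^\omega \setminus \MP{U}_i) = \MP{S}_i \cap (2^\omega \setminus \MP{U}_i)$ and $(2^\omega \setminus \MP A) \cap (2^\omega \setminus \MP{U}_i) = \MP{T}_i \cap (2^\omega \setminus \MP{U}_i)$ witnessing $\MP U$-layerwise decidability, I would set $\MP{G}_i = \MP{S}_i \cup \MP{U}_i$ and $\MP{H}_i = \MP{T}_i \cup \MP{U}_i$. These are uniformly effectively open; $\MP A \subseteq \MP{G}_i$ and $2^\omega \setminus \MP{H}_i \subseteq \MP A$ follow by short case distinctions on membership in $\MP{U}_i$; and---crucially---$\MP{S}_i$ and $\MP{T}_i$ are disjoint outside $\MP{U}_i$ (there they agree with $\MP A$ and with its complement respectively), so $\MP{S}_i \cap \MP{T}_i \subseteq \MP{U}_i$ and therefore $\MP{G}_i \cap \MP{H}_i = (\MP{S}_i \cap \MP{T}_i) \cup \MP{U}_i = \MP{U}_i$, giving $\lambda(\MP{G}_i \cap \MP{H}_i) = \lambda(\MP{U}_i) \leq 2^{-i}$. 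Hence $\MP A$ is effectively $\lambda$-measurable.

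The main---and essentially the only---obstacle is seeing why optimality, rather than mere universality, is needed in the forward direction: universality would only give $\bigcap_i(\MP{G}_i \cap \MP{H}_i) \subseteq \bigcap_i \MP{U}_i$, which says nothing about any individual layer, whereas a layerwise semi-decidability witness must match $\MP A$ on $2^\omega \setminus \MP{U}_i$ for each fixed $i$. The uniform shift $\MP{V}_{i+c} \subseteq \MP{U}_i$ granted by optimality is exactly what lets one fixed $c$ make $(\MP{G}_{i+c})_{i\in\omega}$ and $(\MP{H}_{i+c})_{i\in\omega}$ serve as the witnesses.
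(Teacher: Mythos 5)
Your proof is correct; the paper states this result only as a citation to Hoyrup and Rojas without reproducing a proof, and your argument is the standard one: the forward direction correctly isolates the role of optimality by turning $(\MP{G}_i \cap \MP{H}_i)_{i\in\omega}$ into an ML-test and shifting by the constant $c$, and the backward direction's verification that $\MP{G}_i \cap \MP{H}_i = \MP{U}_i$ is sound. Nothing to add.
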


\subsection{The Weihrauch degrees}

We refer the reader to Weihrauch~\cite{weihrauch2000computable} for a general reference for computable analysis.  The Weihrauch degrees are the degree structure induced by Weihrauch reducibility.  This reducibility was defined by Klaus Weihrauch and has been the subject of intense study \cites{Ste89,Wei92a,weihrauch1992tte,hertling1996unstetigkeitsgrade,MR1650860,MALQ:MALQ200310125}.  Recently, it has been used to classify the computational difficulty of solving specific mathematical tasks~\cites{gherardi2009,MALQ:MALQ200910104,Pauly:jucs_16_18:how_incomputable_is_finding,MR2791341,MR2760117,MR2915694,MR2889550,FrancoisDamirJeffJoePaul}.  The work in this article was sparked by the work in Brattka, Gherardi, and H\"olzl~\cite{BrHoeGh13}.

Informally, assume that we have a black box that is given some input and produces some output.  In all cases but the trivial ones, the transformations performed by the black boxes are non-computable.  In full generality, inputs and outputs can be members of any two so-called \emph{represented spaces}, but in this article, an input is typically a ML-random sequence, and an output is typically a single natural number bounding the randomness deficiency of the input sequence.

We think of the transformation performed by a black box as solving some mathematical problem $A$.  That is, the input is an instance of $A$, and the output a solution to that instance.  We can then, given such a box, investigate whether we can also use it to solve some other mathematical problem $B$.  That is, we ask if it is possible to encode an instance of $B$ into an instance of $A$, obtain a solution to this $A$-instance using the black box, and finally decode the solution to the $A$-instance into a solution to the original $B$-instance.  This informal notion of reducibility can be seen as a sort of many-one reducibility among mathematical problems.  We now give the formal definitions pertaining to the Weihrauch degrees.

\begin{Definition}
A \emph{representation} of a set $X$ is a surjective partial function $\delta\colon \!\!\subseteq\omega^\omega\to X$.  Call $(X, \delta)$ a \emph{represented space}.
\end{Definition}

The `$\subseteq$' in `$\delta\colon \!\!\subseteq\omega^\omega\to X$' emphasizes that the domain of $\delta$ may be a proper subset of~$\omega^\omega$.  Sometimes we suppress the representation and write $X$ in place of $(X, \delta)$ for the sake of readability.

\begin{Definition}
Let $f\colon \!\!\subseteq(X, \delta_X) \rightrightarrows (Y, \delta_Y)$ be a multi-valued partial function on represented spaces.  A function $\Gamma\colon \!\!\subseteq\omega^\omega\to\omega^\omega$ \emph{realizes} $f$ (or is a \emph{realizer} of $f$, written $\Gamma \vdash f$) if $\delta_Y \circ \Gamma(p)\in f \circ \delta_X(p)$ for all $p\in\dom(f\circ \delta_X)$.
\end{Definition}

The `$\rightrightarrows$' in `$f\colon \!\!\subseteq(X, \delta_X) \rightrightarrows (Y, \delta_Y)$' emphasizes that the function is multi-valued.  The intuition behind these two definitions is that a representation allows us to use elements of Baire space as codes for elements of other spaces and that a function between two such other spaces can be represented as a realizer (that is, as a function from codes for elements of the input space to codes for the corresponding elements of the output space).

\begin{Definition}
Let $f$ and $g$ be two multi-valued functions on represented spaces.  Then $f$ is \emph{Weihrauch reducible} to $g$ ($f\leqW g$) if there are Turing functionals $\Phi,\Psi\colon \!\!\subseteq\omega^\omega \to \omega^\omega$ such that $\Psi \circ \langle \id, \Gamma\circ \Phi \rangle \vdash f$ for all $\Gamma \vdash g$.
\end{Definition}

The intuition is that $\Phi$ pre-processes an input before handing it over to $\Gamma$ and that $\Psi$ post-processes the output provided by $\Gamma$.  In terms of the informal reducibility described in terms of black boxes above, $\Gamma$ is the black box, $\Phi$ performs the encoding, and $\Psi$ performs the decoding.  Note that $\Psi$ has access to the original input.
\begin{Definition}
Let $f$ and $g$ be two multi-valued functions on represented spaces.  Then $f$ is \emph{strongly Weihrauch reducible} to $g$ ($f \leqsW g$) if there are Turing functionals $\Phi,\Psi\colon \!\!\subseteq\omega^\omega \to \omega^\omega$ such that $\Psi \circ \Gamma\circ \Phi \vdash f$ for all $\Gamma \vdash g$.
\end{Definition}

Note that in strong Weihrauch reductions, $\Psi$ does not have access to the original input.

Although we do not study the lattice structure of these degrees in this work, we mention that the Weihrauch degrees form a distributive lattice by results by Brattka and Pauly~\cites{MR2791341,MALQ:MALQ200910104} and that the strong Weihrauch degrees form a lower semi-lattice by Brattka~\cite{MR2791341}; whether the latter actually form a lattice is unknown.

Finally, we introduce the operations on multi-valued functions that we study in Section~\ref{sec-Weihrauch}.

\begin{Definition}
Let $f \colon \!\!\subseteq X \rightrightarrows Y$ and $g \colon \!\!\subseteq W \rightrightarrows Z$ be multi-valued functions.  The \emph{product} of $f$ and $g$ is the multi-valued function $f \times g \colon \!\!\subseteq X \times W \rightrightarrows Y \times Z$ defined by $(f \times g)(x,w) = f(x) \times g(w)$.
\end{Definition}

The \emph{parallelization} of a function $f$ is the infinite product of $f$ with itself.

\begin{Definition}
Let $f\colon \!\!\subseteq X \rightrightarrows Y$ be a multi-valued function.  The \emph{parallelization} of $f$ is the multi-valued function $\widehat{f} \colon \!\!\subseteq X^\omega \imp Y^\omega$ defined by $\widehat{f}((x_i)_{i \in \omega}) = (f(x_i))_{i \in \omega}$.
\end{Definition}

The \emph{compositional product} of two Weihrauch degrees was introduced by Brattka, Gherardi, and Marcone~\cite{MR2889550}.  The intuition is that the computational product of $f$ and $g$ describes the maximum complexity of a function that can be computed by calling $g$, making an additional computation, and then calling $f$.

\begin{Definition}
Let $f$ and $g$ be two multi-valued functions.  The \emph{compositional product} of $f$ and $g$ is the Weihrauch degree 
\begin{align*}
f \ast g = \sup\{f_0 \circ g_0 : (f_0 \leqW f) \andd (g_0 \leqW g) \andd (\text{$f$ and $g$ are composable)}\},
\end{align*}
where the supremum is with respect to $\leqW$.
\end{Definition}

Brattka, Oliva, and Pauly~\cite{BOP13} prove that $f \ast g$ is well-defined on the Weihrauch degrees and that the supremum is in fact a maximum.  Trivially, $f \times g \leqW f \ast g$.  Brattka, Gherardi, and Marcone~\cite{MR2889550} also define the \emph{strong compositional product} in the strong Weihrauch degrees by replacing $\leqW$ with $\leqsW$ and taking the supremum with respect to $\leqsW$.  However, strong compositional products are not guaranteed to exist in general.

\section{Non-optimal universal Martin-L\"of tests}\label{sec-NonOptimal}

Every optimal ML-test is universal.  In fact, if $\MP U$ is a ML-test such that, for every ML-test $\MP V$, $\forall i \exists j (\MP{V}_j \subseteq \MP{U}_i)$, then $\MP U$ is universal.  However, there are a universal ML-test $\MP U$ and an ML-test $\MP V$ such that $\exists i \forall j (\MP{V}_j \nsubseteq \MP{U}_i)$.

\begin{Lemma}\label{lem-NotOptimal}
There are an effectively open set $\MP{W}_0$ and a ML-test $\MP V$ such that $\overline\MLR \subseteq \MP{W}_0$ but $\forall i(\MP{V}_i \nsubseteq \MP{W}_0)$.
\end{Lemma}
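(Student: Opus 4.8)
The plan is to build $\MP{W}_0$ directly as a single effectively open set containing $\overline{\MLR}$ with measure bounded away from $1$ in a controlled way, and simultaneously build $\MP V$ so that each $\MP V_i$ has a piece that escapes $\MP W_0$. The key idea is that $\overline{\MLR}$ is a $\Pi^0_2$ null set, so we can cover it by an effectively open set of any positive measure we like; in particular, fix a universal ML-test $\MP G$ and note $\overline{\MLR} = \bigcup_n (2^\omega \setminus \MP{G}_n)$ is contained in, say, $\MP{G}_0^c$'s complement-covering. Concretely, take $\MP{W}_0$ to be an effectively open set with $\overline{\MLR} \subseteq \MP{W}_0$ and $\lambda(\MP W_0) \leq 1/2$: such a set exists because $\overline{\MLR}$ is effectively null, so it is contained in $\MP V_1$ for a suitable universal (indeed optimal) ML-test, which has measure $\leq 1/2$. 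This already gives an effectively open $\MP W_0$ with the first property and with room to spare.

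Next I would construct the ML-test $\MP V = (\MP V_i)_{i \in \omega}$ so that $\lambda(\MP V_i) \leq 2^{-i}$ but $\MP V_i \nsubseteq \MP W_0$ for every $i$. The cleanest way is to reserve, for each $i$, a cylinder $[\sigma_i]$ that is guaranteed to be disjoint from $\MP W_0$, and then put a small piece of $\MP V_i$ inside $[\sigma_i]$. Since $\lambda(\MP W_0) \leq 1/2 < 1$, the complement $2^\omega \setminus \MP W_0$ is a nonempty closed set of positive measure; but it is only $\Pi^0_1$, not something we can uniformly name a point in, so instead I would enumerate $\MP W_0$ and, in parallel, search: at stage $s$, having enumerated a clopen approximation $\MP W_0[s]$ with $\lambda(\MP W_0[s]) \leq 1/2$, there must be a string $\tau$ of some length with $[\tau] \cap \MP W_0[s] = \emptyset$. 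The subtlety is that $\MP W_0$ keeps growing, so a $\tau$ chosen at stage $s$ may later be swallowed. This is the main obstacle, and I would resolve it exactly as in the standard "below a given measure, something escapes" arguments: because $\lambda(\MP W_0) \leq 1/2$ permanently, for each $i$ there is a length $\ell_i$ and a string $\tau_i$ of length $\ell_i$ with $[\tau_i] \cap \MP W_0 = \emptyset$ outright — but to do this effectively and uniformly I would instead not fix $\tau_i$ in advance; rather I would define $\MP V_i$ as $2^{-i}$ worth of the "not-yet-covered" region, updating as $\MP W_0$ grows, and argue that since $\lambda(\MP W_0) \leq 1/2$ the set $\MP V_i \setminus \MP W_0$ cannot be emptied (its measure, once we commit mass, exceeds what $\MP W_0$ can ever cover within the relevant cylinders).

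Concretely, here is the cleanest route. Let $\MP W_0$ be effectively open with $\overline{\MLR} \subseteq \MP W_0$ and $\lambda(\MP W_0) \le 1/2$. For each $i$, I would set $\MP V_i$ to be an effectively open set, defined uniformly in $i$, with $\lambda(\MP V_i) \le 2^{-i}$, obtained by the following finite-injury-free enumeration: enumerate strings into $\MP V_i$ greedily so as to keep $\MP V_i \not\subseteq \MP W_0$, i.e., at each stage, if currently $\MP V_i[s] \subseteq \MP W_0[s]$, add one new cylinder $[\rho]$ with $[\rho] \not\subseteq \MP W_0[s]$ and $\lambda(\MP V_i[s] \cup [\rho]) \le 2^{-i}$; such a $[\rho]$ of small enough measure exists because $\lambda(\MP W_0[s]) \le 1/2$ leaves a positive-measure co-region, so it contains a cylinder of measure $\le 2^{-i} - \lambda(\MP V_i[s])$ meeting that co-region unless $\MP V_i[s]$ already has measure making this impossible — and a short computation shows it never does, since we only ever add a cylinder when $\MP V_i[s] \subseteq \MP W_0[s]$, so $\lambda(\MP V_i[s]) \le \lambda(\MP W_0[s]) \le 1/2$, hence $2^{-i} - \lambda(\MP V_i[s])$ can be made positive by also only adding cylinders small relative to the remaining budget. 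The point is that this procedure changes $\MP V_i$ only finitely often: each time we add a cylinder we strictly increase $\lambda(\MP V_i)$ by a positive amount bounded below at that step, yet the total stays $\le 2^{-i}$, and moreover once a cylinder $[\rho]$ we added satisfies $[\rho] \not\subseteq \MP W_0$ permanently we never act again. Since $\MP W_0$ has measure $\le 1/2$, it cannot eventually contain every cylinder we try, so the construction stabilizes with $\MP V_i \not\subseteq \MP W_0$. Verifying uniform c.e.-ness of $(\MP V_i)_i$ and the measure bound is then routine. The genuinely delicate point, which I would spell out carefully, is the argument that the enumeration acts only finitely often for each $i$ and that a suitable small escaping cylinder is always available; everything else is bookkeeping.
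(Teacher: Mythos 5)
There is a genuine gap, and it is located exactly where the lemma's difficulty lies. You propose to first fix $\MP{W}_0$ as a level of a universal, ``indeed optimal'', ML-test (of measure $\leq 1/2$) and only then construct $\MP V$ against it. But if $\MP{W}_0 = \MP{U}_1$ for an \emph{optimal} test $\MP U$, the lemma is false for that choice of $\MP{W}_0$: by the definition of optimality, for \emph{every} ML-test $\MP V$ there is a $c$ with $\MP{V}_{1+c} \subseteq \MP{U}_1 = \MP{W}_0$, so no ML-test whatsoever escapes $\MP{W}_0$ at every level. The same obstruction threatens any $\MP{W}_0$ that you do not build yourself with escape room deliberately reserved. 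Moreover, even against a non-optimal $\MP{W}_0$, your greedy stabilization argument does not go through: the cylinders you ever enumerate into $\MP{V}_i$ have total measure at most $2^{-i} \leq 1/2$, so nothing in the bound $\lambda(\MP{W}_0) \leq 1/2$ prevents $\MP{W}_0$ from eventually swallowing every one of them. If your increments are uniformly bounded below, the budget $2^{-i}$ is exhausted after finitely many attempts with nothing guaranteed to survive; if they shrink, the construction may act infinitely often and end with $\MP{V}_i \subseteq \MP{W}_0$. The sentence ``it cannot eventually contain every cylinder we try'' is precisely the step that fails.

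The fix is to construct $\MP{W}_0$ and $\MP V$ \emph{together}, arranging that $\MP{W}_0$ is provably thin inside certain reserved cylinders. The paper does this by choosing, for each $s$, a cylinder $[\sigma_s]$ of measure $\leq 2^{-(s+2)}$ avoiding the part of $\MP{U}_2$ enumerated so far, letting $\MP{W}_0$ agree with $\MP{U}_2$ outside the $[\sigma_s]$ but equal $[\sigma_s] \cap \MP{U}_{|\sigma_s|+1}$ inside $[\sigma_s]$, and setting $\MP{V}_i = [\sigma_i]$. Then $\overline\MLR \subseteq \MP{W}_0$ still holds (non-randoms in $[\sigma_s]$ lie in $\MP{U}_{|\sigma_s|+1}$), while $\lambda([\sigma_i] \cap \MP{W}_0) \leq 2^{-(|\sigma_i|+1)} < \lambda([\sigma_i])$ gives $\MP{V}_i \nsubseteq \MP{W}_0$ by a measure comparison \emph{local to} $[\sigma_i]$ --- no search for a surviving cylinder is needed. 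Your write-up correctly senses that ``a $\tau$ chosen at stage $s$ may later be swallowed'' is the main obstacle, but the resolution has to come from controlling $\MP{W}_0$ inside the reserved cylinders, not from a global measure count.
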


\begin{proof}
Let $\MP U$ be a universal ML-test, and let $(\MP{U}_{2,s})_{s \in \omega}$ be an enumeration of the basic open sets in $\MP{U}_2$.  Define a sequence $(\sigma_s)_{s \in \omega}$ by letting each $\sigma_s \in 2^{<\omega}$ be the first string of length $\geq s+2$ such that $[\sigma_s]$ is disjoint from $\MP{U}_{2,s} \cup \bigcup_{t<s}[\sigma_t]$.  Enumerate in stages the effectively open set $\MP{W}_0'$ by $\MP{W}_{0,s}' = \MP{U}_{2,s} \setminus \bigcup_{t<s}[\sigma_t]$.  By choice of the sequence $(\sigma_s)_{s \in \omega}$, the result is $\MP{W}_0' = \MP{U}_2 \setminus \bigcup_{s \in \omega}[\sigma_s]$.  Let $\MP{W}_0 = \MP{W}_0' \cup \bigcup_{s \in \omega}([\sigma_s] \cap \MP{U}_{|\sigma_s|+1})$ and, for each $i \in \omega$, let $\MP{V}_i = [\sigma_i]$.  $\MP V$ is a ML-test because $\lambda(\MP{V}_i) = \lambda([\sigma_i]) = 2^{-|\sigma_i|} \leq 2^{-(i+2)}$.  To see that $\overline\MLR \subseteq \MP{W}_0$, consider $X \in \overline\MLR = \bigcap_{i \in \omega} \MP{U}_i$.  If $X \in [\sigma_s]$ for some $s \in \omega$, then $X \in [\sigma_s] \cap \MP{U}_{|\sigma_s|+1} \subseteq \MP{W}_0$.  If not, then $X \in \MP{W}_0' \subseteq \MP{W}_0$.  In either case, $X \in \MP{W}_0$, as desired.  To see that $\forall i(\MP{V}_i \nsubseteq \MP{W}_0)$, observe that $\MP{V}_i = [\sigma_i]$ and that $[\sigma_i] \cap \MP{W}_0 = [\sigma_i] \cap \MP{U}_{|\sigma_i|+1}$.  Thus
\begin{align*}
\lambda(\MP{V}_i \cap \MP{W}_0) \leq \lambda(\MP{U}_{|\sigma_i|+1}) \leq 2^{-(|\sigma_i|+1)} < 2^{-|\sigma_i|} = \lambda(\MP{V}_i),
\end{align*}
so $\MP{V}_i \nsubseteq \MP{W}_0$.
\end{proof}

\begin{Theorem}\label{thm-NotOptimal}
There are ML-tests $\MP U$ and $\MP V$ such that $\MP U$ is universal but $\exists i \forall j (\MP{V}_j \nsubseteq \MP{U}_i)$.
\end{Theorem}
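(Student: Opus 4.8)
The statement asks for a \emph{universal} ML-test $\MP U$ and an ML-test $\MP V$ such that for some fixed index $i$ we have $\MP V_j \nsubseteq \MP U_i$ for all $j$. Lemma~\ref{lem-NotOptimal} already produces a single effectively open set $\MP W_0$ with $\overline{\MLR} \subseteq \MP W_0$ and $\MP V_i \nsubseteq \MP W_0$ for all $i$, together with a witnessing ML-test $\MP V$. So the plan is essentially to build $\MP U$ around $\MP W_0$, using $\MP W_0$ as the level that no $\MP V_j$ can be contained in.

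Concretely, I would start from the universal ML-test $\MP U'$ (and the associated $\MP W_0$, $\MP V$) supplied by the proof of Lemma~\ref{lem-NotOptimal}, and define the new test $\MP U$ by shifting and inserting $\MP W_0$ at a suitable level. The natural choice is to pick some level $i_0$ with $\lambda(\MP W_0) \le 2^{-i_0}$ --- note that $\MP W_0 = \MP W_0' \cup \bigcup_s([\sigma_s] \cap \MP U'_{|\sigma_s|+1})$ has measure bounded by $\lambda(\MP U'_2) + \sum_s 2^{-(|\sigma_s|+1)} \le 2^{-2} + \sum_s 2^{-(s+3)} = 2^{-2} + 2^{-3}$, which is $\le 2^{-1}$, so in fact $i_0 = 1$ works --- and then set $\MP U_i = \MP U'_{i+1}$ for $i < i_0$... more cleanly, since $i_0 = 1$ we can simply set $\MP U_0 = 2^\omega$ (or any effectively open set of measure $\le 1$ containing $\MP W_0$ and $\overline{\MLR}$), $\MP U_1 = \MP W_0$, and $\MP U_{i} = \MP U'_{i-1} \cap \MP W_0$ for $i \ge 2$. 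Then each level has measure $\le 2^{-i}$: level $0$ trivially, level $1$ because $\lambda(\MP W_0) \le 2^{-1}$, and level $i \ge 2$ because $\lambda(\MP U_i) \le \lambda(\MP U'_{i-1}) \le 2^{-(i-1)}$ --- wait, that only gives $2^{-(i-1)}$, not $2^{-i}$, so I should instead intersect with $\MP U'_i$: define $\MP U_i = \MP U'_i \cap \MP W_0$ for $i \ge 2$, giving $\lambda(\MP U_i) \le 2^{-i}$. The sequence is uniformly effectively open since $\MP W_0$ and the $\MP U'_i$ are.

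The two things left to verify are: (1) $\MP U$ is universal, and (2) $\MP V_j \nsubseteq \MP U_1 = \MP W_0$ for all $j$. Point (2) is immediate from Lemma~\ref{lem-NotOptimal} with $i = 1$. For point (1), I would show $\bigcap_i \MP U_i = \overline{\MLR}$: the inclusion $\overline{\MLR} \subseteq \bigcap_i \MP U_i$ holds because $\overline{\MLR} \subseteq \MP W_0$ (Lemma~\ref{lem-NotOptimal}), $\overline{\MLR} \subseteq \MP U_0 = 2^\omega$, and $\overline{\MLR} = \bigcap_i \MP U'_i \subseteq \MP U'_i$ for each $i$, so $\overline{\MLR} \subseteq \MP U'_i \cap \MP W_0 = \MP U_i$ for $i \ge 2$; conversely $\bigcap_i \MP U_i \subseteq \bigcap_{i \ge 2} \MP U'_i = \overline{\MLR}$. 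Since any ML-test $\MP T$ satisfies $\bigcap_i \MP T_i \subseteq \overline{\MLR} = \bigcap_i \MP U_i$, the test $\MP U$ is universal. I do not anticipate a genuine obstacle here; the only mild care needed is bookkeeping to make sure every level of $\MP U$ has measure at most $2^{-i}$ while still keeping $\MP W_0$ sitting at a single named level, which is why intersecting with the correspondingly-indexed $\MP U'_i$ at levels $\ge 2$ is the right move.
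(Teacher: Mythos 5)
Your proof is correct and follows essentially the same route as the paper: take the $\MP{W}_0$ and $\MP V$ from Lemma~\ref{lem-NotOptimal} and splice $\MP{W}_0$ into a universal ML-test at one fixed level, which then witnesses $\exists i \forall j (\MP{V}_j \nsubseteq \MP{U}_i)$. The paper simply replaces $\MP{U}_0$ by $\MP{W}_0$ (the measure bound $2^{-0}=1$ is automatic at level $0$), so your bookkeeping at level $1$ and the intersections at levels $\geq 2$ are unnecessary, though harmless.
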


\begin{proof}
Let $\MP{W}_0$ and $\MP V$ be as in Lemma~\ref{lem-NotOptimal}, let $\MP U$ be any universal ML-test, and replace $\MP{U}_0$ by~$\MP{W}_0$.  $\MP U$ remains universal because $\overline\MLR \subseteq \MP{W}_0$, and $i=0$ witnesses $\exists i \forall j (\MP{V}_j \nsubseteq \MP{U}_i)$.
\end{proof}

\begin{Remark}
The ML-test $\MP V$ constructed in the proof of Lemma~\ref{lem-NotOptimal} satisfies $\bigcap_{i \in \omega}\MP{V}_i = \emptyset$.  This is not necessary because we could let $\MP{V}'$ be any ML-test and replace each $\MP{V}_i$ by $\MP{V}_{i+1} \cup \MP{V}'_{i+1}$.  Hence the ML-test $\MP{V}$ in Lemma~\ref{lem-NotOptimal} and Theorem~\ref{thm-NotOptimal} may be taken to be universal or even optimal.
\end{Remark}

On the other hand, if $\MP U$ and $\MP V$ are ML-tests such that $\forall i \exists j (\MP{V}_j \subseteq \MP{U}_i)$, then there is a function $f \leqT 0''$ such that $\forall i(\MP{V}_{f(i)} \subseteq \MP{U}_i)$.  The next theorem states that, in this situation, $0''$ is necessary in general.

\begin{Theorem}\label{thm-ComputeTot}
There are ML-tests $\MP W$ and $\MP V$ such that
\begin{enumerate}[(i)]
\item $\MP W$ is universal,
\item $\forall i \exists j (\MP{V}_j \subseteq \MP{W}_i)$, and
\item if $f \colon \omega \imp \omega$ satisfies $\forall i (\MP{V}_{f(i)} \subseteq \MP{W}_i)$, then $f \geqT 0''$.\label{total_thm3}
\end{enumerate}
\end{Theorem}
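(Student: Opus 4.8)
The plan is to design $\MP W$ and $\MP V$ so that, for each $n$, the collection of levels of $\MP V$ that fit inside $\MP{W}_n$ encodes the bit $0''(n)$ so rigidly that \emph{every} admissible value of $f(n)$ betrays it. Fix a computable $T$ with $n\in 0''\iff\exists a\,\forall b\,T(n,a,b)$, and call $a$ a \emph{permanent witness for $n$} if $\forall b\,T(n,a,b)$. Partition $\omega=\bigsqcup_n L_n$ into computable infinite blocks, $L_n=\{q_n\}\cup\{v_{n,a}:a\in\omega\}$ with $q_n=\la n,0\ra$ and $v_{n,a}=\la n,a+1\ra$; block $n$ will govern the index $\MP{W}_n$ and the levels in $L_n$. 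I would aim for
\[
\MP{V}_{q_n}\subseteq\MP{W}_n\iff n\notin 0'',\qquad
\MP{V}_{v_{n,a}}\subseteq\MP{W}_n\iff a\text{ is a permanent witness for }n,
\]
together with $\MP{V}_j\nsubseteq\MP{W}_n$ whenever $j\notin L_n$. Granting this, clause~(ii) is immediate (for $n\notin0''$ use $j=q_n$; for $n\in0''$ use $j=v_{n,a}$ for a permanent witness $a$), and if $f$ satisfies $\forall i\,(\MP{V}_{f(i)}\subseteq\MP{W}_i)$, then $\{j:\MP{V}_j\subseteq\MP{W}_n\}$ is $\{q_n\}$ when $n\notin0''$ and is $\{v_{n,a}:a\text{ a permanent witness}\}\neq\varnothing$ when $n\in0''$; hence $0''(n)=1\iff f(n)\neq q_n$, which is decidable from $f$, so $f\geqT0''$, giving clause~(iii).

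The engine will be a ``$\Pi^0_2$-race'' refining the idea of Lemma~\ref{lem-NotOptimal}. Given a computable list of $\Sigma^0_1$ events $R_0,R_1,R_2,\dots$ — processed in order, so that \emph{target $t$ is reached} once target $t-1$ is and $R_t$ has fired — a level $\ell$ of $\MP V$, an index $i$ of $\MP W$, and a clopen workspace $G_\ell$: I would reserve pairwise disjoint clopen $\pi_{\ell,-1},\pi_{\ell,0},\pi_{\ell,1},\dots\subseteq G_\ell$, put $\pi_{\ell,-1}$ into $\MP{V}_\ell$ outright and $\pi_{\ell,t}$ into $\MP{V}_\ell$ when target $t$ is reached, and enumerate all of $\pi_{\ell,t}$ into $\MP{W}_i$ when target $t+1$ is reached. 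Then $\MP{V}_\ell\subseteq\MP{W}_i$ iff every target is reached, i.e.\ iff $\bigwedge_t R_t$ holds: if $t^{*}$ is the last target reached, then $\pi_{\ell,t^{*}}\in\MP{V}_\ell$ is never fully put into $\MP{W}_i$. I would instantiate this with $\ell=v_{n,a}$, $i=n$, $R_b=$ ``$T(n,a,b)$'' (so $\bigwedge_b R_b$ says $a$ is a permanent witness for $n$), and with $\ell=q_n$, $i=n$, $R_c=$ ``$\exists b\,\neg T(n,c,b)$'' (so $\bigwedge_c R_c$ says no $c$ is a permanent witness, i.e.\ $n\notin0''$).

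It then remains to house all of this inside genuine Martin-L\"of tests with $\MP W$ universal. Fix a universal ML-test $\MP U$; since $\lambda(\MP{U}_1)\leq\tfrac12$ (and re-indexing makes it as small as we like), I would partition $\MP{U}_1=\bigsqcup_j G_j$ into clopen workspaces with $\sum_{j\in L_n}\lambda(G_j)$ as small as desired, noting that $2^\omega\setminus\MP{U}_1\subseteq\MLR$ so nothing outside $\bigcup_j G_j$ ever needs covering. Inside $G_j$ I would let the race-pieces $\pi_{j,t}$ exhaust $G_j\cap\MP{U}_{L_j}$ for a large $L_j$, so the residue $G_j\setminus\bigcup_t\pi_{j,t}=G_j\setminus\MP{U}_{L_j}$ is again contained in $\MLR$, whence $\overline\MLR\cap G_j\subseteq\bigcup_t\pi_{j,t}$ and $\lambda(\MP{V}_j)\leq2^{-L_j}\leq2^{-j}$. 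To cover $\overline\MLR$ in every $\MP{W}_i$ without ever swallowing a critical piece, I would put into \emph{every} $\MP{W}_i$, unconditionally, the tiny ``mini-background'' $\pi_{j,t}\cap\MP{U}_{m_{i,j,t}}$ with $m_{i,j,t}$ so huge that $2^{-m_{i,j,t}}<\lambda(\pi_{j,t})$ and $\sum_{j,t}2^{-m_{i,j,t}}$ is negligible: this covers $\overline\MLR\cap\pi_{j,t}\subseteq\MP{U}_{m_{i,j,t}}$ yet leaves each $\pi_{j,t}$ properly uncovered in $\MP{W}_i$ until (and unless) block $i$ later enumerates the rest of $\pi_{j,t}$. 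With all sizes and the $m_{i,j,t}$ chosen geometrically, $\lambda(\MP{W}_i)\leq2^{-i}$ and every series converges; $\MP W$ is universal because $\overline\MLR\subseteq\MP{U}_1=\bigcup_j G_j$ and each $\overline\MLR\cap G_j$ lies in $\MP{W}_i$; and for $j\notin L_n$ the set $\MP{W}_n$ meets $G_j$ only in mini-backgrounds, so the piece $\pi_{j,-1}\in\MP{V}_j$ is not covered and $\MP{V}_j\nsubseteq\MP{W}_n$.

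The hard part, where these devices are really earning their keep, is reconciling two opposing demands on $\MP W$: every $\MP{W}_i$ must contain $\overline\MLR$, even though for large $i$ the budget $2^{-i}$ is minuscule, while $\MP{W}_i$ must simultaneously \emph{fail} to contain a prescribed sequence of arbitrarily small clopen pieces. Carrying out the coding inside a partition of $\MP{U}_1$ (so that the unavoidable limit points of the coding regions are all Martin-L\"of random and hence need no covering) and covering each piece only partially at first, via $\MP{U}_m$ for very large $m$, is precisely what lets both demands be met; verifying the measure inequalities and the uniform effectivity of the construction is then routine but lengthy.
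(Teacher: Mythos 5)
Your proposal is correct in its architecture and reaches the theorem by a genuinely different route than the paper. The paper does not encode $0''$ directly: it builds $\MP W$ and $\MP V$ so that any admissible $f$ must bound, for each $e$, the least $n$ with $\Phi_e(n)\ua$ (when one exists), and then invokes the classical fact that any such bounding function computes $0''$ (via $0'$). Concretely, each $\MP{W}_i$ is essentially a tail $\MP{U}_{e_i}$ of an optimal test whose index $e_i$ is pushed past the lengths of finitely many reserved cylinders, and the same reserved cylinder is dumped into \emph{all} $\MP{V}_j$ with $j\leq n_{i,s}$, so the levels of $\MP V$ are shared across all requirements. You instead partition the indices of $\MP V$ into blocks and realize, for each block, a prescribed $\Pi^0_2$ pattern of containments via your race gadget, so that $0''(n)$ is read off from the single equation $f(n)=q_n$. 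What your approach buys is modularity and a stronger-looking intermediate fact (any uniformly $\Pi^0_2$ family of conditions can be realized as the containment pattern $\{(j,i):\MP{V}_j\subseteq\MP{W}_i\}$, up to the blockwise restriction); what it costs is the heavier covering apparatus (workspaces plus mini-backgrounds), where the paper gets universality almost for free from $\MP{W}_i\supseteq\MP{U}_{e_i}$.

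One point in your write-up deserves an explicit justification rather than being filed under ``routine'': you partition $\MP{U}_1$ into infinitely many nonempty \emph{clopen} workspaces $G_j$, and likewise let the $\pi_{j,t}$ exhaust $G_j\cap\MP{U}_{L_j}$ with infinitely many nonempty clopen pieces. A compact set admits no partition into infinitely many nonempty disjoint open pieces, so this step would fail if $\MP{U}_1$ or $G_j\cap\MP{U}_{L_j}$ were clopen. It is not: every level of a universal test contains $\overline\MLR$, which is dense, so $\MP{U}_1$ and each $G_j\cap\MP{U}_{L_j}$ are dense open sets of measure less than that of their ambient clopen set, hence not closed, hence their canonical prefix-free enumerations produce infinitely many disjoint cylinders that can be grouped (and split, to control the block measures) as you require. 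With that observation added, and with the pieces reserved on the fly as cylinders are enumerated rather than fixed in advance, the remaining verification is indeed routine.
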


\begin{proof}
By the following (well-known) claim, it suffices to construct $\MP W$ and $\MP V$ such that every $f$ satisfying $\forall i (\MP{V}_{f(i)} \subseteq \MP{W}_i)$ also satisfies 
\begin{align*}
\forall e(\exists n \Phi_e(n)\ua \imp (\exists n \leq f(e))\Phi_e(n)\ua)\tag{$\dagger$}\label{total_thm}.
\end{align*}

\begin{Claim}
Suppose $f \colon \omega \imp \omega$ satisfies \textnormal{(\ref{total_thm})}.  Then $f \geqT 0''$.
\end{Claim}
\begin{proof}[Proof of claim]
First we show that $f \geqT 0'$.  To determine whether or not $\Phi_e(e)\da$, generate an index $i$ such that $\forall n(\Phi_i(n)\da \biimp \Phi_{e,n}(e)\ua)$.  Then $\Phi_e(e)\da$ if and only if $(\exists n \leq f(i))\Phi_{e,n}(e)\da$.  This shows that $f \geqT 0'$.  Now we readily see that $f \geqT 0''$ as well because $\Phi_e$ is total if and only if $(\forall n \leq f(e))\Phi_e(n)\da$, and this latter predicate is $f$-computable because $f \geqT 0'$.
\end{proof}

Let $\MP U$ be an optimal ML-test.  We simultaneously enumerate $\MP W$ and $\MP V$ in stages.  For each $i$, initialize $n_{i,0} = 0$ and $e_{i,0} = i+1$.  At stage $\la i,s \ra$, proceed as follows.
\begin{itemize}
\item If $\Phi_{i,s}(n_{i,s})\ua$, then enumerate $\MP{U}_{e_{i,s},s}$ into $\MP{W}_i$, set $n_{i,s+1} = n_{i,s}$, and set $e_{i,s+1} = e_{i,s}$.
\item If $\Phi_{i,s}(n_{i,s})\da$, then enumerate $\MP{U}_{i+1,s}$ into $\MP{W}_i$.  Let $\sigma$ be the first string such that $[\sigma]$ is disjoint from $\MP{W}_i$ and that $(\forall j \leq n_{i,s})(\lambda(\MP{V}_j \cup [\sigma]) < 2^{-j})$.  Enumerate $[\sigma]$ into $\MP{V}_j$ for each $j \leq n_{i,s}$, set $n_{i,s+1} = n_{i,s}+1$, and set $e_{i,s+1} = \max(e_{i,s}, |\sigma|)+1$.
\end{itemize}

It is easy to see that $\forall i(\lambda(\MP{V}_i) \leq 2^{-i})$.  To see that $\forall i (\lambda(\MP{W}_i) \leq 2^{-i})$, fix $i$ and observe that $\MP{W}_i \subseteq \bigcup_{e \geq i+1} \MP{U}_e$ and hence that $\lambda(\MP{W}_i) \leq \sum_{e \geq i+1}\lambda(\MP{U}_e) \leq \sum_{e \geq i+1} 2^{-e} = 2^{-i}$.  Thus $(\MP{W}_i)_{i \in \omega}$ and $(\MP{V}_i)_{i \in \omega}$ are indeed ML-tests.  Furthermore, we have that $\forall i \exists j(\MP{U}_j \subseteq \MP{W}_i)$.  Fix $i$.  If $\exists n \Phi_i(n)\ua$, then $n_i = \lim_s n_{i,s}$ exists and is the least such $n$, $e_i = \lim_s e_{i,s}$ exists, and $\MP{U}_{e_i} \subseteq \MP{W}_i$.  If $\forall n \Phi_i(n)\da$, then $\MP{U}_{i+1} \subseteq \MP{W}_i$.  The fact that $\MP U$ is an optimal ML-test now immediately implies that $\MP W$ is universal and that $\forall i \exists j (\MP{V}_j \subseteq \MP{W}_i)$.

To finish the proof, it remains to show (\ref{total_thm3}). For this we argue that each $f$ as in (\ref{total_thm3}) has property (\ref{total_thm}). To see this, it suffices to show, for each $i$, that if $n$ is least such that $\Phi_i(n)\ua$, then $(\forall j < n)(\MP{V}_j \nsubseteq \MP{W}_i)$.  If this is the case, then any $f$ with $\forall i (\MP{V}_{f(i)} \subseteq \MP{W}_i)$ must have $f(i)\geq n$, thereby bounding the first place of divergence $n$ of $\Phi_i$ as required by (\ref{total_thm}).

So fix $i$, suppose that $n$ is least such that $\Phi_i(n)\ua$, and assume $n > 0$ (for if $n=0$ the desired property trivially holds).  Then $n = \lim_s n_{i,s}$, and there is a stage $\la i,s \ra$ at which $n_{i,s} = n-1$ and $\Phi_{i,s}(n_{i,s})\da$.  At this stage we enumerate a $[\sigma]$, disjoint from what has been enumerated into $\MP{W}_i$ so far, into $\MP{V}_j$ for every $j \leq n-1$.  From this stage on, we enumerate $\MP{U}_{e_i}$ into $\MP{W}_i$, where $e_i = \lim_s e_{i,s} > |\sigma|$.  It follows that $[\sigma] \nsubseteq \MP{W}_i$ (because $\lambda([\sigma]) > \lambda([\sigma] \cap \MP{W}_i)$) and therefore that $(\forall j < n)(\MP{V}_j \nsubseteq \MP{W}_i)$.
\end{proof}

\begin{Remark}
In fact, the universal ML-test $\MP W$ constructed in Theorem~\ref{thm-ComputeTot} has the following properties.
\begin{itemize}
\item If $\MP U$ is any ML-test, then $\forall i \exists j(\MP{U}_j \subseteq \MP{W}_i)$.

\item If $\MP U$ is an optimal ML-test and $f \colon \omega \imp \omega$ is a function such that $\forall i (\MP{U}_{f(i)} \subseteq \MP{W}_i)$, then $f \geqT 0''$.
\end{itemize}
\end{Remark}

\section{Layerwise computability depends on the ML-test}\label{sec-LWC}

Let $\MP U$ and $\MP V$ be universal ML-tests.  If there is a computable function $f \colon \omega \rightarrow \omega$ such that $\forall i (\MP{V}_{f(i)} \subseteq \MP{U}_i)$, then every $\MP V$-layerwise computable function is also $\MP U$-layerwise computable.  In particular, if $\MP U$ is an optimal ML-test, then every $\MP V$-layerwise computable function is $\MP U$-layerwise computable; and if $\MP U$ and $\MP V$ are both optimal ML-tests, then a function is $\MP U$-layerwise computable if and only if it is $\MP V$-layerwise computable.  However, the results of the previous section show that in general a function $f \colon \omega \rightarrow \omega$ such that $\forall i (\MP{V}_{f(i)} \subseteq \MP{U}_i)$ need not exist at all.  Moreover, there are cases in which such functions do exist but are necessarily difficult to compute.

This situation raises the question of whether or not there are a universal ML-test $\MP U$ and a (necessarily non-optimal) universal ML-test $\MP V$ such that some function is $\MP U$-layerwise computable but not $\MP V$-layerwise computable.  We show that such ML-tests indeed exist.  Let $\MP U$ be an optimal ML-test.  Recall from Theorem~\ref{thm-H&R} that a set is effectively $\lambda$-measurable if and only if it is $\MP U$-layerwise decidable. We will show that if a universal ML-test $\MP V$ is not optimal then  effectively $\lambda$-measurable sets need not be $\MP V$-layerwise decidable.

\begin{Theorem}\label{thm-BadLayering}
There are an effectively $\lambda$-measurable set $\MP A \subseteq 2^\omega$ and a universal ML-test $\MP W$ such that $\MP A$ is not $\MP W$-layerwise decidable.
\end{Theorem}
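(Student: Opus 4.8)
The plan is to build $\MP{A}$ and $\MP{W}$ together by a diagonalisation against all uniformly c.e.\ sequences of open sets, using a fixed optimal ML-test $\MP{U}$ as scaffolding. I would take $\MP{A}$ to be a \emph{c.e.\ open} set, so that $2^\omega\setminus\MP{A}$ is $\Pi^0_1$ and hence automatically $\MP{W}$-layerwise co-semi-decidable; all the work then goes into making $2^\omega\setminus\MP{A}$ \emph{not} $\MP{W}$-layerwise semi-decidable, which already defeats $\MP{W}$-layerwise decidability of $\MP{A}$. The test $\MP{W}$ would be $\MP{W}_n=\MP{U}_{n+2}$ in all but the following respect: I reserve pairwise disjoint clopen ``arenas'' $[\gamma_e]$ of rapidly decreasing measure, one per requirement $R_e$, assign to $R_e$ a fresh target level $m_e$, and replace $\MP{W}_{m_e}\cap[\gamma_e]$ by $\MP{U}_{K_e}\cap[\gamma_e]$ for a very large $K_e$ (chosen so that $\lambda(\MP{W}_{m_e}\cap[\gamma_e])<\tfrac14\lambda([\gamma_e])$, and still large enough to keep $\lambda(\MP{W}_{m_e})\le 2^{-m_e}$). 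Every $\MP{W}_n$ still contains a dense open set covering $\overline{\MLR}$, and $\bigcap_n\MP{W}_n\supseteq\bigcap_n\MP{U}_{n+2}=\overline{\MLR}$ regardless of the deviations, so $\MP{W}$ is a universal ML-test; by Theorem~\ref{thm-H&R} it will necessarily come out non-optimal.

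Requirement $R_e$ asks that the $e$-th uniformly c.e.\ sequence $(\MP{U}^e_n)_n$ \emph{fail} to satisfy $(2^\omega\setminus\MP{A})\cap(2^\omega\setminus\MP{W}_n)=\MP{U}^e_n\cap(2^\omega\setminus\MP{W}_n)$ for all $n$. Its strategy is simply to monitor $\lambda(\MP{U}^e_{m_e}\cap[\gamma_e])$ over the stages: if this measure ever exceeds $\tfrac12\lambda([\gamma_e])$, enumerate $[\gamma_e]$ into $\MP{A}$; otherwise leave $[\gamma_e]$ outside $\MP{A}$ forever. Suppose for contradiction that $(\MP{U}^e_n)_n$ were correct. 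If $[\gamma_e]$ ended up in $\MP{A}$, intersecting the correctness equation at level $m_e$ with $[\gamma_e]$ gives $\MP{U}^e_{m_e}\cap[\gamma_e]\subseteq\MP{W}_{m_e}$, whence $\lambda(\MP{U}^e_{m_e}\cap[\gamma_e])<\tfrac14\lambda([\gamma_e])$, contradicting that the threshold $\tfrac12\lambda([\gamma_e])$ was crossed. If instead $[\gamma_e]$ stayed out of $\MP{A}$, the same equation forces $[\gamma_e]\setminus\MP{W}_{m_e}\subseteq\MP{U}^e_{m_e}$, hence $\lambda(\MP{U}^e_{m_e}\cap[\gamma_e])\ge\tfrac34\lambda([\gamma_e])>\tfrac12\lambda([\gamma_e])$, so by continuity of measure from below the threshold was crossed at some finite stage and $[\gamma_e]$ should have been enumerated into $\MP{A}$ --- again a contradiction. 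Thus $R_e$ is met. (In the first case one sees a positive-measure set of points in $\MP{A}\cap\MP{U}^e_{m_e}\setminus\MP{W}_{m_e}$, a ``commission error''; in the second, a positive-measure set in $(2^\omega\setminus\MP{A})\setminus(\MP{W}_{m_e}\cup\MP{U}^e_{m_e})$, an ``omission error''.)

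The step I expect to be the main obstacle is reconciling this diagonalisation with \emph{effective $\lambda$-measurability} of $\MP{A}$. Since $\MP{A}$ is open, effective measurability amounts to $\lambda(\MP{A})$ being a computable real --- one then takes $\MP{U}_i=\MP{A}$ and $\MP{V}_i=2^\omega\setminus C_i$ for a suitable clopen $C_i\subseteq\MP{A}$ with $\lambda(\MP{A})-\lambda(C_i)\le 2^{-i}$ --- but in the construction above the threshold for $R_e$ may be crossed only at an arbitrarily late stage, so dumping a fixed-measure arena into $\MP{A}$ late makes $\lambda(\MP{A})$ merely left-c.e. The remedy is to have $R_e$ act not on one arena but, at each stage $s$, on a fresh sub-arena $[\gamma_{e,s}]\subseteq[\gamma_e]$ of measure $\le 2^{-(s+e)}$, each with its own target level $m_{e,s}$ and its own large $K_{e,s}$ released lazily, so that whenever a threshold is crossed the measure added to $\MP{A}$ is summable \emph{and} small relative to the stage of the action; then $\lambda(\MP{A})=\sum_e(\text{measure of at most one committed sub-arena of }R_e)$ converges at a computable rate. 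One must then replay the dichotomy argument sub-arena by sub-arena, checking that a correct $(\MP{U}^e_n)_n$ is still forced into an inconsistency on some $[\gamma_{e,s}]$ --- this is the delicate part, since the adversary may cover sub-arenas slowly. The remaining verifications --- that $\sum_e\lambda([\gamma_e])$ and the content added to $\MP{W}$ stay small enough to preserve all test inequalities, that $\overline{\MLR}$ really is covered by every $\MP{W}_n$ despite the lazily released content, that distinct target levels can be arranged, and that the sets witnessing $\MP{A}$'s effective measurability are uniformly c.e.\ --- are routine bookkeeping.
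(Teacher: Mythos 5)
Your single-arena dichotomy is correct and is a genuinely different diagonalisation from the paper's (you defeat the c.e.\ witness sequences for layerwise semi-decidability of $2^\omega\setminus\MP A$ by a measure-threshold argument inside reserved clopen arenas, whereas the paper diagonalises against Turing functionals $\Phi_i$ by waiting for $\Phi_i(\cdot)(i)$ to converge on a measure-$\tfrac12$ set of strings and then planting a single small cone). The universality of the doctored test $\MP W$ also goes through as you describe. But the step you yourself flag as delicate --- reconciling the construction with effective $\lambda$-measurability of $\MP A$ --- is a genuine gap, and the sub-arena remedy does not close it as stated. If each sub-arena $[\gamma_{e,s}]$ (of measure $\le 2^{-(s+e)}$, fixed at its stage of \emph{introduction}) is monitored forever, then a requirement $R_e$ that never acts leaves the possible future contribution of its largest sub-arena, of measure about $2^{-e}$, hanging over the approximation to $\lambda(\MP A)$ at every stage; since one can never confirm that $R_e$ will not act, $\lambda(\MP A)$ is only left-c.e., not computable, and an open set of non-computable measure is not effectively $\lambda$-measurable. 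If instead each sub-arena is only eligible for commitment during a computably bounded window, then a correct witness sequence that enumerates slowly evades every window and the requirement is never met --- exactly the ``adversary may cover sub-arenas slowly'' problem you name without resolving.

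The missing idea is that what gets enumerated into $\MP A$ at the moment of action must be chosen \emph{at that moment} and have measure bounded by $2^{-t}$ for $t$ the stage of \emph{action}, not of introduction: when the threshold for an arena is crossed at stage $t$, one should commit only a small fresh cone $[\sigma]\subseteq\MP U^e_{m_e,t}\cap[\gamma_e]$ disjoint from the current approximation to $\MP W_{m_e}$, rather than the whole arena. This in turn forces a second piece of machinery you have not set up: your $\MP W$ is fixed in advance (as $\MP U_{n+2}$ with static replacements by $\MP U_{K_e}$ on the arenas), but $\MP U_{K_e}$ may be dense in $[\gamma_e]$, so no cone can be chosen disjoint from its final value; one must instead choose $[\sigma]$ disjoint from the stage-$t$ approximation and then dynamically retarget the level-$m_e$ content of $\MP W$ to a much larger index $K'$ so that what enters $\MP W_{m_e}\cap[\sigma]$ after stage $t$ has measure strictly less than $\lambda([\sigma])$, guaranteeing a point of $[\sigma]\setminus\MP W_{m_e}$ that witnesses the failure. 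With both repairs your argument becomes correct, but at that point it has essentially reconstructed the mechanism of the paper's proof (the stage-indexed small cones and the $e_{i,s}$-style delayed enumeration into $\MP W_i$), which is precisely how the paper obtains effective $\lambda$-measurability for free.
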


\begin{proof}
 The idea of the proof is to build $\MP A$ in such a way that no functional $\Phi_i$ is a $\MP W$-layerwise computation procedure for the characteristic function of $\MP A$.  We achieve this by waiting for $\Phi_i(\cdot)(i)$ to converge to either $0$ or $1$ on some $[\sigma]$.  In the first case we put $[\sigma]$ into $\MP A$, and in the second case we ensure that $[\sigma]$ is never put into $\MP A$.  Then $[\sigma]$ witnesses the failure of $\Phi_i$.
 
We simultaneously enumerate the ML-test $(\MP{W}_i)_{i \in \omega}$ and compute two sequences $(\IN_s)_{s \in \omega}$ and $(\OUT_s)_{s \in \omega}$ of finite sets of strings from which we define the effectively $\lambda$-measurable set~$\MP A$.  In fact, $\MP A$ will be an effectively open set. Throughout the construction, we maintain that $\lambda(\MP{W}_i) \leq 2^{-i-4}$ for all $i \in \omega$ and that $\lambda([\IN_s]) \leq 1/16$, $\lambda([\OUT_s]) \leq 1/16$, and $[\IN_s] \cap [\OUT_s] = \emptyset$ for all $s \in \omega$.

Let $\MP Y$ be any nested universal ML-test.  Initialize $\MP{W}_{i,0} = \emptyset$ for each $i \in \omega$, $\IN_0 = \OUT_0 = \emptyset$, and $e_{i,0} = i+4$ for each $i \in \omega$.  At stage $s = \la i,t \ra$, check if $t$ is least such that there is an $n\in \omega$ and and a sequence of strings $(\sigma_j)_{j < n}$ from $2^t$ such that $\lambda(\bigcup_{j<n}[\sigma_j]) \geq 1/2$ and $(\forall j < n)(\Phi_i(\sigma_j)(i)\da < 2)$.  If $t$ is indeed least, search for the first $\sigma \in 2^{<\omega}$ such that $\Phi_i(\sigma)(i)\da < 2$, $[\sigma] \cap (\MP{W}_{i,t} \cup [\IN_s] \cup [\OUT_s]) = \emptyset$, and $\lambda([\sigma]) \leq 2^{-s-5}$.  Such a $\sigma$ must exist because the measures of $\MP{W}_{i,t}$, $[\IN_s]$, and $[\OUT_s]$ are all $\leq 1/16$.  Set $e_{i,s+1} = \max(e_{i,s}, |\sigma|)+1$.  If $\Phi_i(\sigma)(i)=0$, set $\IN_{s+1} = \IN_s \cup \{\sigma\}$ and $\OUT_{s+1} = \OUT_s$.  If $\Phi_i(\sigma)(i)=1$, set $\IN_{s+1} = \IN_s$ and $\OUT_{s+1} = \OUT_s \cup \{\sigma\}$.  If no sequence $(\sigma_j)_{j < n}$ as above was found for this $t$ or if $t$ is not least, set $e_{i,s+1} = e_{i,s}$, $\IN_{s+1} = \IN_s$, and $\OUT_{s+1} = \OUT_s$.  In either case, put $\MP{W}_{i,t+1} = \MP{W}_{i,t} \cup \MP{Y}_{e_{i,t+1},t}$.

The above procedure defines the ML-test $\MP W$ and the sequences $(\IN_s)_{s \in \omega}$ and $(\OUT_s)_{s \in \omega}$.  $\MP W$ is indeed an ML-test because (by the nestedness of $\MP Y$) $\forall i (\MP{W}_i \subseteq \MP{Y}_{i+4})$.  Hence $\forall i(\lambda(\MP{W}_i) \leq \lambda(\MP{Y}_{i+4}) \leq 2^{-(i+4)})$.  To see that $\MP W$ is universal, it suffices to notice that, for all $i$, the limit $e_i = \lim_s e_{i,s}$ exists because $e_{i,s} \neq e_{i,s+1}$ for at most one $s$.  This implies that $\exists j(\MP{Y}_j \subseteq \MP{W}_i)$, from which the universality of $\MP W$ follows from the universality of $\MP Y$.

Let $\MP A = \bigcup_{s \in \omega}[\IN_s]$.  We show that $\MP A$ is effectively $\lambda$-measurable.  Let $\MP{U}_i = \MP A$ for each $i \in \omega$, and let $\MP{V}_i = 2^\omega \setminus [\IN_i]$ for each $i \in \omega$.  Then $\MP{U}_i \cap \MP{V}_i = (\bigcup_{s \in \omega}[\IN_s]) \setminus [\IN_i]$.  At any stage $s$, at most one $\sigma$ enters $\IN_s$, and such a $\sigma$ satisfies $\lambda([\sigma]) \leq 2^{-s-5}$.  Thus $\lambda(\MP{U}_i \cap \MP{V}_i) \leq \sum_{s>i}2^{-s-5} = 2^{-i-5} \leq 2^{-i}$, so $(\MP{U}_s)_{s \in \omega}$ and $(\MP{V}_s)_{s \in \omega}$ witness that $\MP A$ is effectively $\lambda$-measurable.  It is also easy to see that $[\IN_s]$ and $[\OUT_s]$ are disjoint for each $s \in \omega$.  That the measures of $[\IN_s]$ and $[\OUT_s]$ are $\leq 1/16$ for each $s \in \omega$ is because at each stage $s$ there is at most one $\sigma$ added to $\IN_s$ or $\OUT_s$, the corresponding $[\sigma]$ has measure at most $2^{-s-5}$, and $\sum_{s \geq 0}2^{-s-5} = 1/16$.

Finally, we show that $\MP A$ is not $\MP W$-layerwise decidable.  Suppose for a contradiction that $\Phi_i$ witnesses that $\MP A$ is $\MP W$-layerwise decidable, and let $\MP{K}_i = 2^\omega \setminus \MP{W}_i$.  Then $(\forall X \in \MP{K}_i \cap \MP A)(\Phi_i(X)(i)=1)$ and $(\forall X \in \MP{K}_i \setminus \MP A)(\Phi_i(X)(i)=0)$.  As $\lambda(\MP{K}_i) \geq 1-2^{-i-4} \geq 1/2$, there is a stage $s = \la i,t \ra$ where $t$ is least such that there is an $n\in \omega$ and a sequence of strings $(\sigma_j)_{j < n}$ from $2^t$ such that $\lambda(\bigcup_{j<n}[\sigma_j]) \geq 1/2$ and $(\forall j < n)(\Phi(\sigma_j)(i)\da < 2)$.  Thus at stage $s$ we found a string $\sigma$ such that either $\Phi_i(\sigma)(i)=0$, in which case we added $\sigma$ to $\IN_{s+1}$; or $\Phi_i(\sigma)(i)=1$, in which case we added $\sigma$ to $\OUT_{s+1}$.  Regardless of the choice of $\sigma$, we have that $[\sigma] \cap \MP{K}_i \neq \emptyset$ because $[\sigma]$ was chosen disjoint from $\MP{W}_{i,t}$, and a set of measure at most $\lambda([\sigma])/2$ was enumerated into $\MP{W}_i$ from stage $t+1$ on.  If $\Phi_i(\sigma)(i)=0$, then $\sigma \in \IN_{s+1}$, so $[\sigma] \subseteq \MP A$.  Then any $X \in [\sigma] \cap \MP{K}_i$ gives the contradiction $X \in \MP{K}_i \cap \MP A$ but $\Phi_i(X)(i)=0$.  On the other hand, if $\Phi_i(\sigma)(i)=1$, then $\sigma \in \OUT_{s+1}$, in which case $[\sigma] \subseteq 2^\omega \setminus \MP A$.  Then any $X \in [\sigma] \cap \MP{K}_i$ gives the contradiction $X \in \MP{K}_i \setminus \MP A$ but $\Phi_i(X)(i)=1$.  Therefore $\MP A$ is not $\MP W$-layerwise decidable.
\end{proof}

\begin{Corollary}\label{cor-LWCDependsOnTest}
The notion of layerwise computability depends on the universal ML-test.  That is, there are universal ML-tests $\MP U$ and $\MP W$ and a function $F$ that is $\MP U$-layerwise computable but not $\MP W$-layerwise computable.
\end{Corollary}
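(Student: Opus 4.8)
The plan is to read the desired function $F$ directly off of Theorem~\ref{thm-BadLayering}. Concretely, let $\MP A \subseteq 2^\omega$ and the universal ML-test $\MP W$ be as furnished by that theorem, and let $F = \chi_{\MP A}$ be the characteristic function of $\MP A$. For the second witness $\MP U$ I would take any optimal ML-test; such a test exists by Martin-L\"of's construction, and every optimal ML-test is universal, so $\MP U$ is a legitimate universal ML-test. One could equally well take $\MP U$ to be the particular universal ML-test that Hoyrup and Rojas~\cite{MR2545900} use to define layerwise computability, which underscores that their notion genuinely changes when one substitutes the non-optimal test $\MP W$.

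To see that $F$ is $\MP U$-layerwise computable, recall that $\MP A$ is effectively $\lambda$-measurable by Theorem~\ref{thm-BadLayering}; since $\MP U$ is optimal, Theorem~\ref{thm-H&R} yields that $\MP A$ is $\MP U$-layerwise decidable; and since a set is layerwise decidable relative to a universal ML-test exactly when its characteristic function is layerwise computable relative to that test (the observation recorded just before Theorem~\ref{thm-H&R}), it follows that $F = \chi_{\MP A}$ is $\MP U$-layerwise computable. For the negative direction, Theorem~\ref{thm-BadLayering} asserts outright that $\MP A$ is not $\MP W$-layerwise decidable, so by the same equivalence $F$ is not $\MP W$-layerwise computable. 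Together these give the two universal ML-tests $\MP U$ and $\MP W$ and the function $F$ demanded by the statement.

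All of the genuine difficulty is already absorbed into Theorem~\ref{thm-BadLayering}, so there is no real obstacle in the corollary itself; it is essentially a matter of packaging, the only things to verify being the elementary equivalence between layerwise decidability of a set and layerwise computability of its characteristic function and the fact that optimal ML-tests are universal. If one wanted $F$ to be presented as a function $2^\omega \imp 2^\omega$ rather than $2^\omega \imp \omega$, one could compose with a fixed computable embedding of $\{0,1\}$ into $2^\omega$ without affecting either layerwise computability claim.
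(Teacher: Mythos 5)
Your proposal is correct and follows essentially the same route as the paper: take $\MP U$ optimal, $\MP W$ and $\MP A$ from Theorem~\ref{thm-BadLayering}, let $F=\chi_{\MP A}$, and combine effective $\lambda$-measurability with Theorem~\ref{thm-H&R} and the equivalence between layerwise decidability of a set and layerwise computability of its characteristic function. Nothing further is needed.
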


\begin{proof}
Let $\MP U$ be an optimal ML-test, let $\MP W$ and $\MP A$ be as in Theorem~\ref{thm-BadLayering}, and let $F$ be the characteristic function of $\MP A$.  $\MP A$ is effectively $\lambda$-measurable, therefore $\MP A$ is $\MP U$-layerwise decidable and $F$ is $\MP U$-layerwise computable.  However, $\MP A$ is not $\MP W$-layerwise decidable, so $F$ is not $\MP W$-layerwise computable.
\end{proof}

If $f$ is $\MP{U}$-layerwise computable via $\Phi$, then $\Phi(\la X, i \ra)$ is required to produce the correct value of $f(X)$ for \emph{every} pair $\la X, i \ra$ with $X \in 2^\omega \setminus \MP{U}_i$.  We show that more functions become $\MP U$-layerwise computable if the definition is relaxed to only require $\Phi(\la X, i \ra)=f(X)$ for the \emph{least} $i$ such that $X \in 2^\omega \setminus \MP{U}_i$.  This contrasts with the situation in the Weihrauch degrees, as we see in Section~\ref{sec-Weihrauch}.

\begin{Definition}
Let $\MP U$ be a universal ML-test.  A function $f \colon 2^\omega \imp 2^\omega$ is \emph{exactly $\MP U$-layerwise computable} if there is a Turing functional $\Phi$ such that $(\forall X \in \MLR)(\Phi(\la X, \rd_{\MP U}(X) \ra) = f(X))$.  
\end{Definition}

The function $\rd_{\MP U}(X)$ is clearly exactly $\MP U$-layerwise computable and thus is the natural candidate for a function that is exactly $\MP U$-layerwise computable but not $\MP U$-layerwise computable.  We show that $\rd_{\MP U}(X)$ is indeed not $\MP U$-layerwise computable, thus showing that the exactly $\MP U$-layerwise computable functions are always a strictly larger class than the $\MP U$-layerwise computable functions.  To implement the proof, we first show a basic lemma about summing uniformly right-c.e.\ sequences of reals.

\begin{Lemma}\label{lem-SumRightCE}
If $(r_n)_{n \in \omega}$ is a uniformly right-c.e.\ sequence of reals such that $\forall n(r_n \in [0, 2^{-(n+1)}])$, then $r = \sum_{n \in \omega} r_n$ is right-c.e.
\end{Lemma}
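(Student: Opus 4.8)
The plan is to show that the partial sums of the $r_n$ converge effectively from above to $r$, which is exactly the characterization of right-c.e.\ reals. Since each $r_n$ is right-c.e.\ uniformly in $n$, I can fix a uniformly computable array of rationals $(q_{n,s})_{s \in \omega}$ that is nonincreasing in $s$ with $\lim_s q_{n,s} = r_n$; by replacing $q_{n,s}$ with $\min(q_{n,s}, 2^{-(n+1)})$ I may assume $q_{n,s} \leq 2^{-(n+1)}$ for all $n,s$, and these modified rationals are still nonincreasing in $s$ with limit $r_n$ (using $r_n \leq 2^{-(n+1)}$). Then $q_{n,s} \geq 0$ need not hold as a rational, but $q_{n,s} \geq r_n \geq 0$, so in particular the tail sums $\sum_{n > N} q_{n,s} \leq \sum_{n > N} 2^{-(n+1)} = 2^{-(N+1)}$ are controlled.

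Define the approximation $p_s = \sum_{n \leq s} q_{n,s} + 2^{-(s+1)}$. I claim $(p_s)_{s \in \omega}$ is a computable sequence of rationals converging to $r$ from above, i.e.\ nonincreasing with limit $r$; this suffices by the characterization of right-c.e.\ reals quoted in the preliminaries. Computability is clear since each $q_{n,s}$ is given uniformly and the sum is finite. For the limit: $\sum_{n \leq s} q_{n,s} \to \sum_{n \in \omega} r_n = r$ as $s \to \infty$, because for each fixed $N$ the finite partial sum $\sum_{n \leq N} q_{n,s} \to \sum_{n \leq N} r_n$, and the tails $\sum_{N < n \leq s} q_{n,s} \leq 2^{-(N+1)}$ and $\sum_{n > N} r_n \leq 2^{-(N+1)}$ are both small; since the extra term $2^{-(s+1)} \to 0$, we get $p_s \to r$.

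It remains to check monotonicity, $p_{s+1} \leq p_s$, which I expect to be the one place requiring a small computation rather than a triviality. Writing the difference,
\begin{align*}
p_s - p_{s+1} = \sum_{n \leq s}(q_{n,s} - q_{n,s+1}) - q_{s+1,s+1} + 2^{-(s+1)} - 2^{-(s+2)}.
\end{align*}
Each term $q_{n,s} - q_{n,s+1} \geq 0$ by monotonicity in $s$, and $q_{s+1,s+1} \leq 2^{-(s+2)}$, while $2^{-(s+1)} - 2^{-(s+2)} = 2^{-(s+2)}$; hence $p_s - p_{s+1} \geq 0 - 2^{-(s+2)} + 2^{-(s+2)} = 0$. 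So $(p_s)$ is nonincreasing, and the proof is complete. The only subtlety is the bookkeeping that forces $q_{n,s} \leq 2^{-(n+1)}$ without destroying monotonicity in $s$; once that normalization is in place, the guard term $2^{-(s+1)}$ absorbs the single new summand $q_{s+1,s+1}$ appearing at step $s+1$, which is the mechanism making the partial sums decrease.
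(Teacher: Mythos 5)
Your proof is correct, but it takes a genuinely different route from the paper's. The paper reduces to the left-c.e.\ case by complementation: it writes $1 - r = \sum_{n \in \omega}\bigl(2^{-(n+1)} - r_n\bigr)$, notes that the summands are uniformly left-c.e.\ and bounded by $2^{-(n+1)}$, and then invokes the fact that such a sum is left-c.e. That fact really is straightforward, because for left-c.e.\ summands the partial sums of increasing rational approximations are automatically increasing, so no correction term is needed. You instead work directly on the right-c.e.\ side, where the partial sums of decreasing approximations are \emph{not} automatically decreasing (each step introduces the brand-new summand $q_{s+1,s+1}$), and you compensate with the guard term $2^{-(s+1)}$, which drops by exactly $2^{-(s+2)}$ at each step and thereby absorbs the new summand; your monotonicity computation, the normalization $q_{n,s}\le 2^{-(n+1)}$ (which preserves monotonicity in $s$ and the limit, since a nonincreasing approximation satisfies $q_{n,s}\ge r_n$), and the limit argument via the tail bounds are all correct. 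The trade-off is that the paper's complementation trick makes the monotonicity issue vanish at the cost of citing an unproved (though easy) left-c.e.\ analogue, whereas your construction is self-contained and exhibits the mechanism explicitly. One small wording slip: since $q_{n,s}\ge r_n\ge 0$ automatically, the aside suggesting that $q_{n,s}\ge 0$ might fail is moot.
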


\begin{proof}
As any real $\alpha \in [0,1]$ is right-c.e.\ if and only if $1-\alpha$ is left-c.e., it suffices to show that $1-r$ is left-c.e.  We have that
\begin{align*}
1 - r = 1 - \sum_{n \in \omega} r_n = \sum_{n \in \omega} 2^{-(n+1)} - \sum_{n \in \omega} r_n = \sum_{n \in \omega}(2^{-(n+1)} - r_n).
\end{align*}
The sequence $(2^{-(n+1)} - r_n)_{n \in \omega}$ is uniformly left-c.e., and $\forall n (2^{-(n+1)} - r_n \in [0, 2^{-(n+1)}])$.  It is straightforward to show that $\sum_{n \in \omega} \alpha_n$ is left-c.e.\ whenever $(\alpha_n)_{n \in \omega}$ is a uniformly left-c.e.\ sequence of reals such that $\forall n(\alpha_n \in [0, 2^{-(n+1)}])$.  Thus, $1 - r$ is left-c.e.\end{proof}

\begin{Theorem}\label{thm-RDnotLWC}
Let $\MP U$ be a universal ML-test.  Then $\rd_{\MP U}$ is not $\MP U$-layerwise computable.
\end{Theorem}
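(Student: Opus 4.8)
The plan is to suppose toward a contradiction that a Turing functional $\Phi$ witnesses that $\rd_{\MP U}$ is $\MP U$-layerwise computable, so that $\Phi(X)(i) = \rd_{\MP U}(X)$ for all $i$ and all $X \in 2^\omega \setminus \MP U_i$, and then to exploit the following use-observation. Write $T_m = \{X \in \MLR : \rd_{\MP U}(X) = m\} = \bigcap_{j<m}\MP U_j \setminus \MP U_m$; the $T_m$ partition $\MLR$, and if $\Phi(\sigma)(i)\da = m$ while $[\sigma] \nsubseteq \MP U_i$ --- which is automatic once $|\sigma| < i$, since then $\lambda([\sigma]) > \lambda(\MP U_i)$ --- then every $Y \in [\sigma] \setminus \MP U_i$ has $\Phi(Y)(i) = \Phi(\sigma)(i) = m$ and hence $\rd_{\MP U}(Y) = m$. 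Thus $[\sigma] \setminus \MP U_i \subseteq T_m$, so in particular $[\sigma] \setminus \bigcap_{j<m}\MP U_j \subseteq \MP U_i$ and $[\sigma] \cap \MP U_m \subseteq \MP U_i$.

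First I would turn this into effectivity of the measures attached to $\MP U$. The uniformly c.e.\ open sets $H_{n,i} = \{X : \exists \sigma \prec X\,(\Phi(\sigma)(i)\da \geq n)\}$ will satisfy $\lambda\bigl(H_{n,i} \mathbin{\triangle} \bigcap_{j<n}\MP U_j\bigr) \leq 2^{-i+1}$; since $\lambda(H_{n,i})$ is uniformly left-c.e., this makes $a_n := \lambda\bigl(\bigcap_{j<n}\MP U_j\bigr)$ computable, uniformly in $n$. Running the same bookkeeping a bit further, each level set $T_m$ is seen to be $\MP U$-layerwise decidable --- off $\MP U_i$ it agrees with the c.e.\ open set $\{X : \exists \sigma \prec X\,(\Phi(\sigma)(i)\da = m)\}$, and its complement agrees off $\MP U_i$ with $\bigcup_{m' \le i,\, m' \ne m}\{X : \exists \sigma \prec X\,(\Phi(\sigma)(i)\da = m')\}$ --- and, feeding the associated (a priori only difference-of-left-c.e.) measures through Lemma~\ref{lem-SumRightCE} applied to the tails $\sum_{m \ge N}\lambda(T_m)$, I would deduce that each $\MP U_m$ is in fact effectively $\lambda$-measurable, so that $\lambda(\MP U_m)$ is computable uniformly in $m$. (Here it is convenient to pass first to the nested hull $\bigl(\bigcap_{j \le n}\MP U_j\bigr)_n$, which is again a universal ML-test with the same set of ML-randoms and the same randomness deficiency, and for which layerwise computability of $\rd$ is inherited from $\Phi$.)

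Next I would derive the contradiction. The fixed target is the test-independent fact that $\rd_{\MP U}$ is never computable on $\MLR$: if $\Psi(X) = \rd_{\MP U}(X)$ held for all $X \in \MLR$, then for any $X \in \MLR$ and any $\sigma \prec X$ with $\Psi(\sigma)\da = m$, every ML-random $Y \succ \sigma$ would have $\rd_{\MP U}(Y) = m$, so $[\sigma] \setminus T_m \subseteq [\sigma] \setminus \MLR$ would be null; hence $\lambda([\sigma] \cap \MP U_m) = 0$, and since $\MP U_m$ is open, $[\sigma] \cap \MP U_m = \emptyset$, so every $Y \in [\sigma]$ avoids $\MP U_m \supseteq \bigcap_i \MP U_i$ and thus $[\sigma] \subseteq \MLR$ --- contradicting that $\sigma \conc 0^\omega$ is not ML-random. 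So it suffices to show that the effectivity extracted above actually produces such a $\Psi$: using the uniformly computable $\lambda(\MP U_m)$, fix clopen $\widehat{\MP U}_m \subseteq \MP U_m$ with $\lambda(\MP U_m \setminus \widehat{\MP U}_m) < 2^{-2m}$; then $(\MP U_m \setminus \widehat{\MP U}_m)_m$ is a Solovay test, so for $X \in \MLR$ every large enough $m$ with $X \notin \widehat{\MP U}_m$ is a valid advice ($X \notin \MP U_m$), and from any such $m$ one reads $\rd_{\MP U}(X) = \Phi(X)(m)$.

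I expect the hard part to be this last extraction: ``large enough'' carries an $X$-dependent threshold, so to obtain a genuine Turing functional $\Psi$ one must combine the Solovay test with additional verification --- for instance, accepting a candidate value $v = \Phi(X)(m)$ only after confirming the c.e.\ predicate $X \in \bigcap_{j < v}\MP U_j$, and using that (after passing to the nested hull) the valid advices are cofinal in $\{m : X \notin \widehat{\MP U}_m\}$ whereas the invalid ones form an initial segment, so that the accepted value stabilizes exactly at $\rd_{\MP U}(X)$. The second delicate point is the measure-theoretic core: before the argument the tails $\sum_{m \ge N}\lambda(T_m)$ are only differences of left-c.e.\ reals, and Lemma~\ref{lem-SumRightCE} is precisely what makes them right-c.e., hence --- being left-c.e.\ as well --- computable; this is where the layerwise-computability hypothesis is really being consumed.
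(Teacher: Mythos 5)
Your route up to the computability of the level-set measures is, in essence, the paper's route: the paper carves the sets $\{X\in\MLR:\rd_{\MP U}(X)=i\}$ out of $2^\omega\setminus\MP U_i$ as $\Pi^0_1$ classes $[S_i]$ using the hypothesised functional, notes that $\lambda([S_0\cup S_1])$ is right-c.e.\ (measure of a $\Pi^0_1$ class) while $1-\lambda([S_0\cup S_1])=\sum_{i\ge2}\lambda([S_i])$ is right-c.e.\ by Lemma~\ref{lem-SumRightCE}, and then stops: the measure of a nonempty $\Pi^0_1$ class of ML-randoms is a ML-random, hence non-computable, real (\cite[Theorem~3.2.35]{Nies:book}). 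You already have this exit available: once $\lambda(\MP U_0\cap\MP U_1)$ is computable, the nonempty $\Pi^0_1$ class $2^\omega\setminus(\MP U_0\cap\MP U_1)\subseteq\MLR$ has computable measure and you are done. Two local points need repair on the way there. First, the $H_{n,i}$ step does not make $a_n$ computable: knowing that the left-c.e.\ reals $\lambda(H_{n,i})$ lie within $2^{-i+1}$ of $a_n$ gives no effective upper bounds, because you never know when the enumeration of $H_{n,i}$ is close to its own limit; you genuinely need a closed (right-c.e.) representation of the level sets, which your layerwise-decidability argument (or the paper's $S_i$) supplies. Second, ``layerwise computability of $\rd$ is inherited'' by the nested hull is not automatic, since the hull has larger complements $2^\omega\setminus\bigcap_{j\le i}\MP U_j$ on which $\Phi$ must now be correct; this can be fixed (run $\Phi(\la X,j\ra)$ for all $j\le i$, discard $j$ once $X$ is seen in $\MP U_j$, and wait for all survivors to converge to a common value), but it needs saying.

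The genuine gap is the step you flag yourself: extracting a total functional $\Psi$ with $\Psi(X)=\rd_{\MP U}(X)$ on all of $\MLR$. Confirming the c.e.\ predicate $X\in\bigcap_{j<v}\MP U_j$ certifies only $\rd_{\MP U}(X)\ge v$; the ``accepted'' values are therefore exactly those outputs of $\Phi$ that happen to be $\le\rd_{\MP U}(X)$, and while their supremum is $\rd_{\MP U}(X)$, this monotone lower approximation exists trivially for every universal test (just enumerate the $\MP U_j$) and comes with no effective stopping rule. What is missing is a certificate for the $\Pi^0_1$ fact $X\notin\MP U_v$: the Solovay-test implication ``$X\notin\widehat{\MP U}_m\Rightarrow X\notin\MP U_m$'' holds only beyond a threshold $m_0(X)$ that is not computable from $X$, and nothing in your construction replaces it. Since your own preliminary argument shows unconditionally that no such $\Psi$ exists, a mechanism that consumes nothing beyond the trivially available lower approximation cannot produce one, so the contradiction is never reached as written. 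Redirect the argument at the computable-measure stage and invoke \cite[Theorem~3.2.35]{Nies:book}, as in the paper's proof and in Remark~\ref{rmk-ClosedNotLWSD}, and the proof closes.
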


\begin{proof}
Fix a uniformly computable sequence of trees $(T_i)_{i \in \omega}$ such that $\forall i([T_i] = 2^\omega \setminus \MP{U}_i)$.  Suppose for a contradiction that $\rd_{\MP U}$ is layerwise computable, and let $\Phi$ be such that $\Phi(X)(i) = \rd_{\MP U}(X)$ whenever $X \in [T_i]$.  Define another uniformly computable sequence of trees $(S_i)_{i \in \omega}$ by
\begin{align*}
S_i = \{\sigma \in T_i : \Phi(\sigma)(i)\da \imp \Phi(\sigma)(i) = i\}.
\end{align*}  
We claim that $\forall i ([S_i] = \{X \in \MLR : \rd_{\MP U}(X) = i\})$.  To see this, fix $i \in \omega$ and consider an $X \in \MLR$.  If $\rd_{\MP U}(X) = i$, then $X \in [T_i]$ and, as $\Phi(X)(i) = \rd_{\MP U}(X) = i$, it must be that $\forall n(\Phi(X \restriction n)(i)\da \imp \Phi(X \restriction n)(i) = i)$.  Hence $X \in [S_i]$.  If $\rd_{\MP U}(X) \neq i$, then either $X \notin [T_i]$, or $X \in [T_i]$ and $\Phi(X)(i) = \rd_{\MP U}(X) \neq i$.  If $X \notin [T_i]$, then clearly $X \notin [S_i]$.  If $\Phi(X)(i) = \rd_{\MP U}(X) \neq i$, then there is an $n \in \omega$ such that $\Phi(X \restriction n)(i)\da$ but $\Phi(X \restriction n)(i) \neq i$.  Thus $X \notin [S_i]$.

Let $S = S_0 \cup S_1$.  Note that $[S]$ is non-empty because there must be an $X \in \MLR$ such that $\rd_{\MP U}(X)$ is either $0$ or $1$. This is because
since $\mu(\MP U_1) \leq 1/2$, there is some $X \not \in \MP U_1$. Then either $\MP U_0 \not = 2^\omega$, in which case there is an $X$ with $\rd_{\MP U}(X)=0$, or $\MP U_0 = 2^\omega$, in which case there is an $X$ with $\rd_{\MP U}(X)=1$. 

  We show that $\lambda([S])$ is a computable real, which contradicts the fact that the measure of every non-empty $\Pi^0_1$ class of ML-random sets is a ML-random real (see, for example, Nies~\cite[Theorem~3.2.35]{Nies:book}).  To do this, we show that both $\lambda([S])$ and $1-\lambda([S])$ are right-c.e.  It is clear that $\lambda([S])$ is right-c.e.\ because it is the measure of a $\Pi^0_1$ class.  To see that $1-\lambda([S])$ is right-c.e., observe that $([S_i])_{i \in \omega}$ is a partition of $\MLR$, a set of measure $1$.  Hence $1-\lambda([S]) = \sum_{i \geq 2}\lambda([S_i])$.  For $i > 0$, the fact that $[S_i] \subseteq \MP{U}_{i-1}$ implies that $\lambda([S_i]) \leq \lambda(\MP{U}_{i-1}) \leq 2^{-(i-1)}$.  Lemma~\ref{lem-SumRightCE} thus applies to the sequence $(\lambda([S_i]))_{i \geq 2}$, so $1-\lambda([S])$ is indeed right-c.e.
\end{proof}

\begin{Corollary}
For every universal ML-test $\MP U$, there is a function that is exactly $\MP U$-layerwise computable but not $\MP U$-layerwise computable.  Thus the  class of exactly layerwise computable functions is always strictly bigger than the class of layerwise computable functions.\qed
\end{Corollary}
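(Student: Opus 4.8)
The plan is to take the witnessing function to be $\rd_{\MP U}$ itself; all the real work has already been done in Theorem~\ref{thm-RDnotLWC}, so the corollary is essentially a matter of unpacking definitions. First I would observe that $\rd_{\MP U}$ is exactly $\MP U$-layerwise computable: let $\Phi$ be the Turing functional that, on input a pair $\la X, n \ra$, simply outputs $n$ (ignoring the oracle $X$ entirely). Then for every $X \in \MLR$ we have $\Phi(\la X, \rd_{\MP U}(X) \ra) = \rd_{\MP U}(X)$, so $\Phi$ witnesses that $\rd_{\MP U}$ is exactly $\MP U$-layerwise computable. (This is the same observation already noted in the paragraph preceding Theorem~\ref{thm-RDnotLWC}.)

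Next I would invoke Theorem~\ref{thm-RDnotLWC}, which states precisely that $\rd_{\MP U}$ is not $\MP U$-layerwise computable. Combining the two facts yields that $\rd_{\MP U}$ is a function that is exactly $\MP U$-layerwise computable but not $\MP U$-layerwise computable, which is the first assertion of the corollary. For the second assertion, note that every $\MP U$-layerwise computable function is trivially exactly $\MP U$-layerwise computable (a functional $\Phi$ witnessing the former works a fortiori when restricted to inputs $\la X, \rd_{\MP U}(X)\ra$, since $X \notin \MP U_{\rd_{\MP U}(X)}$); hence the class of $\MP U$-layerwise computable functions is contained in the class of exactly $\MP U$-layerwise computable functions, and by the first assertion this containment is strict. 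Since the argument goes through for an arbitrary universal ML-test $\MP U$, the separation holds uniformly in the choice of test.

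There is no genuine obstacle remaining at this stage: the corollary is an immediate consequence of Theorem~\ref{thm-RDnotLWC} together with the elementary containment of layerwise computability inside exact layerwise computability. The only point requiring even minimal care is spelling out the trivial containment in the reverse direction to justify the word ``strictly,'' and confirming that the functional computing $\rd_{\MP U}$ exactly is legitimate (which it plainly is, as it need not even consult its oracle). Accordingly the proof is short enough that it can simply be stated inline.
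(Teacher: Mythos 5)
Your proposal is correct and is exactly the argument the paper intends: the corollary is stated with no proof because it follows immediately from the observation (made in the paragraph preceding Theorem~\ref{thm-RDnotLWC}) that $\rd_{\MP U}$ is trivially exactly $\MP U$-layerwise computable, together with Theorem~\ref{thm-RDnotLWC} itself. Your explicit check of the reverse containment (using that $X \notin \MP U_{\rd_{\MP U}(X)}$, so a layerwise witness also serves as an exact one) is the only detail the paper leaves implicit, and it is handled correctly.
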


We now prove the analog of Corollary~\ref{cor-LWCDependsOnTest} for exact layerwise computability:  the notion of exact layerwise computability also depends on the universal ML-test used to define it.

\begin{Definition}
Let $\MP U$ be an ML-test.  Define the \emph{stratification} of $\MP U$ to be the ML-test $\MP{U}^\str$ given by
\begin{align*}
\MP{U}^\str_i = [1^{i+3}] \cup \{1^\ell0^\smf X : \ell \in \omega \andd X \in \MP{U}_{i+1}\}
\end{align*}
for each $i \in \omega$.
\end{Definition}

$\MP{U}^\str_i$ can be enumerated by enumerating $[1^{i+3}]$, then by initiating the enumeration of the cones $[1^\ell0^\smf \sigma]$ whenever $[\sigma]$ is enumerated as a subset of $\MP{U}_{i+1}$.  One readily checks that $\lambda(\MP{U}^\str_i) \leq 2^{-(i+3)} + \lambda(\MP{U}_{i+1}) \leq 2^{-(i+3)} + 2^{-(i+1)} \leq 2^{-i}$.  Thus $\MP{U}^\str$ is an ML-test.

\begin{Lemma}\label{lem-StrUniversal}
If $\MP U$ is a universal ML-test, then $\MP{U}^\str$ is a universal ML-test.
\end{Lemma}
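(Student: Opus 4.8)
The plan is as follows. Since it has already been verified above that $\MP{U}^\str$ is an ML-test, it remains only to prove universality, i.e., that $\bigcap_{i} \MP{V}_i \subseteq \bigcap_i \MP{U}^\str_i$ for every ML-test $\MP V$. Because $\bigcap_i \MP{V}_i \subseteq \overline\MLR$ holds for every ML-test $\MP V$, it suffices to establish the single containment $\overline\MLR \subseteq \bigcap_i \MP{U}^\str_i$; as $\MP{U}^\str$ is itself an ML-test, this will in fact yield $\bigcap_i \MP{U}^\str_i = \overline\MLR$.

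To prove $\overline\MLR \subseteq \bigcap_i \MP{U}^\str_i$, I would fix $X \in \overline\MLR$ and $i \in \omega$ and show $X \in \MP{U}^\str_i$ by a case split on the initial segment of $X$. If $X$ begins with $1^{i+3}$, then $X \in [1^{i+3}] \subseteq \MP{U}^\str_i$ and we are done. Otherwise $X \neq 1^\omega$, so $X$ has a first occurrence of $0$, say at position $\ell$, and hence $X = 1^\ell 0^\smf Y$ for some $Y \in 2^\omega$. The key observation is that, since $X = 1^\ell0^\smf Y \notin \MLR$ and Martin-L\"of randomness is invariant under adding or removing a finite prefix, $Y \notin \MLR$ as well. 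Because $\MP U$ is universal, a sequence fails to be ML-random if and only if it lies in $\bigcap_j \MP{U}_j$; thus $Y \in \bigcap_j \MP{U}_j$, and in particular $Y \in \MP{U}_{i+1}$. Consequently $X = 1^\ell 0^\smf Y$ belongs to $\{1^\ell0^\smf Z : \ell \in \omega \andd Z \in \MP{U}_{i+1}\} \subseteq \MP{U}^\str_i$. Since $i$ was arbitrary, $X \in \bigcap_i \MP{U}^\str_i$.

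I do not expect a genuine obstacle here: the argument is essentially bookkeeping around the definition of $\MP{U}^\str_i$, and the only external ingredient is the prefix-invariance of ML-randomness (standard, but if a self-contained treatment is wanted it is recovered by noting that if an ML-test $\MP V$ captures $1^\ell0^\smf Y$, then the sets $\MP{V}'_i = \{Z \in 2^\omega : 1^\ell0^\smf Z \in \MP{V}_{i+\ell+1}\}$ form an ML-test capturing $Y$, the index shift by $\ell+1$ compensating for the factor $2^{\ell+1}$ introduced on passing to the cone $[1^\ell0]$). The two points to get right are the case split on whether $X$ begins with $1^{i+3}$, and the fact that universality of $\MP U$ is being invoked to identify $\overline\MLR$ with $\bigcap_j \MP{U}_j$.
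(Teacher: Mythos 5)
Your proposal is correct and follows essentially the same route as the paper: reduce universality to showing $\overline\MLR \subseteq \bigcap_i \MP{U}^\str_i$, handle the all-$1$s-type case via $[1^{i+3}]$, and otherwise write $X = 1^\ell0^\smf Y$ with $Y \in \overline\MLR$ (by prefix-invariance of ML-randomness) so that universality of $\MP U$ puts $Y$ in $\MP{U}_{i+1}$ and hence $X$ in $\MP{U}^\str_i$. The only difference is that you make the prefix-invariance step and its index-shift justification explicit, which the paper leaves implicit.
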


\begin{proof}
Let $\MP U$ be a universal ML-test.  Let $X \in \overline\MLR$.  We need to show that $X \in \bigcap_{i \in \omega}\MP{U}^\str_i$.  If $X$ is the sequence of all $1$'s, then clearly $X \in \bigcap_{i \in \omega}\MP{U}^\str_i$.  Otherwise, $X = 1^\ell0^\smf Y$ for some $\ell \in \omega$ and some $Y \in \overline\MLR$.  As $Y \in \overline\MLR$, for every $i \in \omega$ there is a $[\tau_i] \subseteq \MP{U}_{i+1}$ with $Y \in [\tau_i]$.  Then, by the definition of the stratification, $X \in [1^\ell0^\smf\tau_i] \subseteq \MP{U}^\str_i$.  Thus $X \in \bigcap_{i \in \omega}\MP{U}^\str_i$.
\end{proof}

\begin{Lemma}\label{lem-StrRD}
Let $\MP U$ be a universal ML-test, and let $X \in \MLR$ be such that $\rd_{\MP U}(X) > 0$.  Then $\forall \ell(\rd_{\MP{U}^\str}(1^\ell0^\smf X) = \rd_{\MP U}(X) - 1)$.
\end{Lemma}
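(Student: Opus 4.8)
The plan is to compute $\rd_{\MP{U}^\str}(1^\ell 0^\smf X)$ directly from the definition $\rd_{\MP V}(Z) = \min\{i : Z \notin \MP{V}_i\}$ by analyzing, for each index $i$, whether $1^\ell 0^\smf X \in \MP{U}^\str_i$. Recall that $\MP{U}^\str_i = [1^{i+3}] \cup \{1^m 0^\smf Y : m \in \omega \andd Y \in \MP{U}_{i+1}\}$. Since $X \in \MLR$, we have $X = 1^\ell 0^\smf X$ is not the all-ones sequence, so $1^\ell 0^\smf X \notin [1^{i+3}]$ for any $i$ (the first $0$ occurs at position $\ell$, whereas every member of $[1^{i+3}]$ begins with at least $i+3 \geq 1$ ones and could still have a $0$ — wait, more carefully: $[1^{i+3}]$ contains sequences beginning with $i+3$ ones, and $1^\ell 0^\smf X$ has exactly $\ell$ ones followed by a $0$, so $1^\ell 0^\smf X \in [1^{i+3}]$ iff $\ell \geq i+3$). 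So I must be a little careful about small $i$, but the point is that for the relevant values of $i$ the membership is governed by the second part of the union.

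First I would observe that $1^\ell 0^\smf X \in \{1^m 0^\smf Y : m \in \omega \andd Y \in \MP{U}_{i+1}\}$ if and only if $X \in \MP{U}_{i+1}$: the decomposition $1^m 0^\smf Y$ of a sequence whose first $0$ is at position $\ell$ forces $m = \ell$ and $Y = X$. Hence, setting aside the $[1^{i+3}]$ part, the sequence $1^\ell 0^\smf X$ leaves $\MP{U}^\str_i$ exactly when $X$ leaves $\MP{U}_{i+1}$, i.e.\ exactly when $i+1 \geq \rd_{\MP U}(X)$, i.e.\ exactly when $i \geq \rd_{\MP U}(X) - 1$. Combining with the $[1^{i+3}]$ component: $1^\ell 0^\smf X \in \MP{U}^\str_i$ iff ($\ell \geq i+3$) or ($X \in \MP{U}_{i+1}$). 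So $1^\ell 0^\smf X \notin \MP{U}^\str_i$ iff $\ell < i+3$ and $i < \rd_{\MP U}(X) - 1$... no wait, iff $\ell \leq i+2$ and $i+1 \geq \rd_{\MP U}(X)$, i.e.\ $\ell \leq i+2$ and $i \geq \rd_{\MP U}(X)-1$.

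Thus $\rd_{\MP{U}^\str}(1^\ell 0^\smf X) = \min\{i : i \geq \rd_{\MP U}(X) - 1 \andd i \geq \ell - 2\}= \max(\rd_{\MP U}(X)-1,\ \ell-2)$. This does not match the claimed value $\rd_{\MP U}(X)-1$ unless $\ell - 2 \leq \rd_{\MP U}(X)-1$, which is false for large $\ell$. So the definition of $\MP{U}^\str_i$ as I've read it must make the $[1^{i+3}]$ part harmless — and indeed it is, provided the intended reading is that membership is tested only via the \emph{structure} that $X$ contributes; I suspect the cones $[1^{i+3}]$ are there to guarantee the all-ones sequence is captured and that for the lemma one restricts to $\rd_{\MP U}(X) > 0$ precisely so that the "$-1$" is nonnegative, while the $\ell$-dependence is absorbed because $1^\ell 0^\smf X$ with $\ell$ large is \emph{already} deep inside $[1^{i+3}]$ — which is consistent with my computation. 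I expect the actual proof handles this by observing that the authors intend $\rd_{\MP{U}^\str}$ applied to sequences of the form $1^\ell 0^\smf X$ where the relevant behavior is governed by the $\MP{U}_{i+1}$ component, and the main obstacle is simply bookkeeping the off-by-one and confirming that the $[1^{i+3}]$ term genuinely contributes nothing new for the indices near $\rd_{\MP U}(X)-1$; concretely, I would just verify: (i) $1^\ell 0^\smf X \in \MP{U}^\str_i$ for all $i \leq \rd_{\MP U}(X) - 2$ (using $X \in \MP{U}_{i+1}$ since $i+1 \leq \rd_{\MP U}(X)-1 < \rd_{\MP U}(X)$), and (ii) $1^\ell 0^\smf X \notin \MP{U}^\str_{\rd_{\MP U}(X)-1}$ (using $X \notin \MP{U}_{\rd_{\MP U}(X)}$ together with $\ell$ being such that $1^\ell 0^\smf X \notin [1^{(\rd_{\MP U}(X)-1)+3}]$ — which will need the hypothesis $\rd_{\MP U}(X)>0$ to keep the index sensible and presumably a convention restricting $\ell$, or a re-reading of the stratification that I will reconcile with the text). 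The hard part, then, is not the mathematics but pinning down precisely why the $[1^{i+3}]$ cones do not perturb $\rd_{\MP{U}^\str}(1^\ell 0^\smf X)$ for the critical index; once that is isolated, the lemma follows immediately by the two membership checks (i) and (ii).
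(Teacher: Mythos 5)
Your membership computation is correct, and the tension you flag is genuine rather than an artifact of your reading: with the stratification as defined, $1^\ell 0^\smf X \in \MP{U}^\str_i$ if and only if $\ell \geq i+3$ or $X \in \MP{U}_{i+1}$, so
\[
\rd_{\MP{U}^\str}(1^\ell 0^\smf X) \;=\; \max\bigl(\rd_{\MP U}(X)-1,\ \ell-2\bigr),
\]
which equals the claimed value only when $\ell \leq \rd_{\MP U}(X)+1$. There is no alternative reading under which the cones $[1^{i+3}]$ become harmless: writing $d = \rd_{\MP U}(X)$, if $\ell \geq d+2$ then $1^\ell 0^\smf X \in [1^{d+2}] = [1^{(d-1)+3}] \subseteq \MP{U}^\str_{d-1}$, so $\rd_{\MP{U}^\str}(1^\ell 0^\smf X) > d-1$. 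The paper's own proof argues only from the second component of the union ($X \notin \MP{U}_d$ implies $1^\ell 0^\smf X \notin \MP{U}^\str_{d-1}$) and silently ignores the $[1^{i+3}]$ component, so it is the paper's argument, not yours, that has the gap here; the downstream application in the theorem on exact layerwise computability uses $\ell = e$ for arbitrary machine indices $e$, so the restriction $\ell \leq d+1$ cannot simply be assumed there either.

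The gap in \emph{your} proposal is that you stop at this impasse: you correctly derive the formula with the $\max$, observe it contradicts the statement for large $\ell$, and then defer the resolution to ``a re-reading of the stratification,'' which does not exist. To turn your work into a proof of something true, you should either (a) prove the corrected statement $\rd_{\MP{U}^\str}(1^\ell 0^\smf X) = \max(\rd_{\MP U}(X)-1,\ \ell-2)$, which your checks (i) and (ii) already establish once (ii) is split into the two cases $\ell-2 \leq d-1$ and $\ell-2 > d-1$, or (b) prove the lemma as stated under the additional hypothesis $\ell \leq \rd_{\MP U}(X)+1$ (equivalently, $1^\ell 0^\smf X \notin [1^{\rd_{\MP U}(X)+2}]$), which is exactly what your check (ii) needs. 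Either way, the only missing step is the one you identified; your instinct that it could be reconciled with the text as written should be abandoned.
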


\begin{proof}
Let $d = \rd_{\MP U}(X)$.  Then $X \in \MP{U}_i$ for all $i < d$, and $X \notin \MP{U}_d$.  Thus $1^\ell0^\smf X \in \MP{U}^\str_i$ for all $i < d-1$, and $1^\ell0^\smf X \notin \MP{U}^\str_{d-1}$.  Hence $\rd_{\MP{U}^\str}(1^\ell0^\smf X) = \rd_{\MP U}(X) - 1$.
\end{proof}

\begin{Theorem}
The notion of exact layerwise computability depends on the universal ML-test.  That is, there are universal ML-tests $\MP U$ and $\MP W$ and a function $F$ that is exactly $\MP U$-layerwise computable but not exactly $\MP W$-layerwise computable.
\end{Theorem}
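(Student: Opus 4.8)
The plan is to recycle the stratification machinery. Fix an optimal ML-test $\MP V$; replacing it first by $\bigl(\bigcap_{j\le i}\MP{V}_j\bigr)_{i\in\omega}$ and then replacing level $0$ by $2^\omega$, we may assume $\MP V$ is nested and $\MP{V}_0 = 2^\omega$, so that $\rd_{\MP V}(Y)\ge 1$ for every $Y\in\MLR$. Put $\MP U := \MP{V}^\str$ and $\MP W := \MP V$; then $\MP U$ is a universal ML-test by Lemma~\ref{lem-StrUniversal}, and $\MP W$ is optimal, hence universal. Every $X\in\MLR$ has a unique decomposition $X = 1^\ell 0^\smf Y$ with $\ell\in\omega$ and $Y\in\MLR$, and I would define $F\colon\MLR\to\omega$ by $F(X) := \rd_{\MP V}(Y)$ (``peel off the leading block $1^\ell 0$, then take $\rd_{\MP V}$'').

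First I would dispatch the positive half. Since $\rd_{\MP V}(Y)>0$, Lemma~\ref{lem-StrRD} gives $\rd_{\MP U}(X) = \rd_{\MP{V}^\str}(1^\ell 0^\smf Y) = \rd_{\MP V}(Y) - 1 = F(X) - 1$ for all $X\in\MLR$, so the functional $\Phi(\la X,n\ra) := n+1$ witnesses that $F$ is exactly $\MP U$-layerwise computable. It therefore remains to show $F$ is not exactly $\MP W=\MP V$-layerwise computable; the theorem then follows, with the roles of $\MP U$ and $\MP W$ in the statement played by $\MP{V}^\str$ and $\MP V$.

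For the negative half I would argue by contradiction, imitating the proof of Theorem~\ref{thm-RDnotLWC}. Suppose $\Psi$ satisfies $\Psi(\la X,\rd_{\MP V}(X)\ra) = F(X)$ for all $X\in\MLR$, and specialise to the points $0^\smf Y$: then $\Psi(\la 0^\smf Y,\rd_{\MP V}(0^\smf Y)\ra) = \rd_{\MP V}(Y)$ for all $Y\in\MLR$. Introduce the auxiliary test $\MP{V}'$ with $\MP{V}'_i := \{Y : 0^\smf Y\in\MP{V}_{i+1}\}$; a short computation shows $\MP{V}'$ is a universal (indeed optimal) ML-test with $\rd_{\MP{V}'}(Y) = \rd_{\MP V}(0^\smf Y) - 1$, and so $\Psi$ exhibits $\rd_{\MP V}$ as exactly $\MP{V}'$-layerwise computable. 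As $\MP V$ and $\MP{V}'$ are both optimal, there is a constant $k$ with $|\rd_{\MP V}(Y)-\rd_{\MP{V}'}(Y)|\le k$ for all $Y\in\MLR$ (apply optimality to the computable measure-halving map $Y\mapsto 0^\smf Y$ and to its partial inverse). Now, as in Theorem~\ref{thm-RDnotLWC}, I would use $\Psi$ together with uniformly computable trees $(T_i)_{i\in\omega}$ with $[T_i] = 2^\omega\setminus\MP{V}_i$ to build $\Pi^0_1$ classes $S_i$ capturing the deficiency-level sets $\{X\in\MLR : \rd_{\MP V}(X) = i\}$ up to the bounded ambiguity $k$, and set $S := \bigcup_{i\le k+1} S_i$. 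This should be a non-empty $\Pi^0_1$ class of ML-random sequences whose measure $\lambda([S])$ is right-c.e., while its complement $1-\lambda([S]) = \sum_{i>k+1}\lambda([S_i])$ is again right-c.e.\ by Lemma~\ref{lem-SumRightCE} (using $[S_i]\subseteq\MP{V}_{i-k-1}$, so $\lambda([S_i])\le 2^{-(i-k-1)}$); hence $\lambda([S])$ is computable, contradicting the fact (cited in the proof of Theorem~\ref{thm-RDnotLWC}) that the measure of a non-empty $\Pi^0_1$ class of ML-random reals is ML-random.

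The step I expect to be the real obstacle is exactly this last one. In Theorem~\ref{thm-RDnotLWC} the advice handed to the layerwise procedure was an upper bound for the deficiency, so a single value $i$ could be fed to $\Phi$ along all of $[T_i]$; here $\Psi$ is only guaranteed correct on the exact advice $\rd_{\MP{V}'}(X)$, which locates $\rd_{\MP V}(X)$ only within a window of size $O(k)$. Consequently the sets cut out by running $\Psi$ with a fixed advice are not literally the deficiency-level sets, and I anticipate having to run $\Psi$ over all advice values in the relevant window, intersect with the tree conditions, and carefully track the $O(k)$ loss in the containments $[S_i]\subseteq\MP{V}_{i-O(k)}$ needed to invoke Lemma~\ref{lem-SumRightCE}, so that $S$ still comes out with computable measure.
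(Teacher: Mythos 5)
Your positive half is fine: with $\MP U=\MP{V}^{\str}$ and $\rd_{\MP V}(Y)\geq 1$ arranged, Lemma~\ref{lem-StrRD} does give $\rd_{\MP U}(X)=F(X)-1$, so $F$ is exactly $\MP U$-layerwise computable. The problem is the negative half, and the obstacle you flag at the end is not a technicality to be ``carefully tracked'' but the point where the argument breaks. The method of Theorem~\ref{thm-RDnotLWC} works because a (non-exact) layerwise procedure with advice $i$ is required to be correct on \emph{all} of $[T_i]=2^\omega\setminus\MP{U}_i$, which is $\Pi^0_1$; hence the sets $S_i$ are computable trees whose path sets partition $\MLR$, and the infinite-sum trick makes $\lambda([S])$ computable. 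In your setting $\Psi'(\cdot)(i)$ is only constrained on the level set $\{Y:\rd_{\MP{V}'}(Y)=i\}=[T'_i]\setminus[T'_{i-1}]$, a difference of $\Pi^0_1$ classes. On the rest of $[T'_i]$ the functional may diverge or output anything, so the classes $S_{i,m}$ you would build contain uncontrolled junk and no longer partition $\MLR$; intersecting with ``$\rd_{\MP{V}'}(Y)=i$'' destroys closedness, and restricting to the first nontrivial level gives only a finite partition of a $\Pi^0_1$ class whose total measure is itself non-computable (indeed ML-random), so the complementation trick yields nothing. I do not see how to complete the measure argument from here.

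There is also a more basic issue: the claim you are trying to prove may simply be false for your $F$. You take an \emph{arbitrary} optimal $\MP V$, and nothing rules out that $\MP V$ respects the shifts $Y\mapsto 1^\ell0^\smf Y$ exactly, say $\rd_{\MP V}(1^\ell0^\smf Y)=\rd_{\MP V}(Y)+c_\ell$ for a computable sequence $(c_\ell)$; for such a $\MP V$ your $F$ \emph{is} exactly $\MP V$-layerwise computable (read off $\ell$ and subtract $c_\ell$). The paper avoids this by not using a ``natural'' $F$ at all: it perturbs the target test, setting $\MP{U}_{i+1}=\MP{V}^{\str}_{i+1}\cup\{1^e0^\smf X:\Phi_e(e)\!\da\;\andd\;1^e0^\smf X\in\MP{V}^{\str}_i\}$, so that whether $\rd_{\MP U}(1^e0^\smf X)$ agrees with $\rd_{\MP{V}^{\str}}(1^e0^\smf X)$ encodes whether $\Phi_e(e)$ halts. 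Then $F=\rd_{\MP U}$ and $\MP W=\MP{V}^{\str}$, and any exact $\MP W$-layerwise procedure for $F$ would let a fixed ML-random $X$ of deficiency $\geq 2$ compute $0'$ --- a contradiction obtained by Turing-degree considerations rather than by measure. Some such diagonalization (against all candidate functionals, or via coding into the test) appears unavoidable; your proposal is missing that ingredient.
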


\begin{proof}
Let $\MP V$ be any universal ML-test satisfying $\forall i(\lambda(\MP{V}_i) \leq 2^{-(i+2)})$, and notice that $\MP{V}^\str$ also satisfies $\forall i(\lambda(\MP{V}^\str_i) \leq 2^{-(i+2)})$.  Define an ML-test $\MP U$ by $\MP{U}_0 = \MP{V}^\str_0$ and
\begin{align*}
\MP{U}_{i+1} = \MP{V}^\str_{i+1} \cup \{1^e0^\smf X : \Phi_e(e)\da \andd 1^e0^\smf X \in \MP{V}^\str_i\}
\end{align*}
for each $i \in \omega$.  The $\MP{U}_{i+1}$ can be enumerated by enumerating $\MP{V}^\str_{i+1}$ while watching the computations of the $\Phi_e(e)$.  When a $\Phi_e(e)$ converges, $\MP{U}_{i+1}$ begins enumerating $[1^e0] \cap \MP{V}^\str_i$ as well.  For each $i \in \omega$, $\lambda(\MP{U}_{i+1}) \leq \lambda(\MP{V}^\str_{i+1}) + \lambda(\MP{V}^\str_i) \leq 2^{-(i+3)} + 2^{-(i+2)} \leq 2^{-(i+1)}$.  Thus $\MP U$ is a ML-test.  $\MP U$ is universal because $\MP{V}^\str$ is universal by Lemma~\ref{lem-StrUniversal}.

Trivially, $\rd_{\MP U}$ is exactly $\MP U$-layerwise computable.  We show that $\rd_{\MP U}$ is not exactly $\MP{V}^\str$-layerwise computable.  Suppose for a contradiction that $\Phi$ witness that $\rd_{\MP U}$ is exactly $\MP{V}^\str$-layerwise computable.  Let $X$ be ML-random and such that $\rd_{\MP V}(X) \geq 2$.  We show that $X \geqT 0'$, which is a contradiction because there are such $X$'s that do not compute $0'$.  Let $d = \rd_{\MP V}(X)$.  Given $e$, compute $d' = \Phi(1^e0^\smf X)(d-1)$.  Output $0$ if $d' = d-1$, and output $1$ otherwise.  By Lemma~\ref{lem-StrRD}, $d-1 = \rd_{\MP{V}^\str}(1^e0^\smf X)$, so by assumption $\Phi(1^e0^\smf X)(d-1)$ must produce $d' = \rd_{\MP U}(1^e0^\smf X)$.  If $\Phi_e(e)\ua$, then $\forall i ([1^e0] \cap \MP{U}_i = [1^e0] \cap \MP{V}^\str_i)$, so $d' = \rd_{\MP U}(1^e0^\smf X) = \rd_{\MP{V}^\str}(1^e0^\smf X) = d-1$.  Thus the procedure correctly outputs $0$.  Now suppose $\Phi_e(e)\da$.  Then $(\forall i < d-1)(1^e0^\smf X \in \MP{V}^\str_i)$, so $(\forall i \leq d-1)(1^e0^\smf X \in \MP{U}_i)$.  Thus $d' = \rd_{\MP U}(1^e0^\smf X) > d-1$, so the procedure correctly outputs $1$.  Thus $X \geqT 0'$, which gives the desired contradiction.

Let $\MP W = \MP{V}^\str$, and let $F = \rd_{\MP U}$.  Then $F$ is exactly $\MP U$-layerwise computable but not exactly $\MP W$-layerwise computable.
\end{proof}

\section{Randomness deficiency and the Weihrauch degrees}\label{sec-Weihrauch}

In this section we study randomness deficiency in the context of the Weihrauch degrees.  Our results show that, unlike the situation with layerwise computability, randomness deficiency considerations in the Weihrauch degrees are typically not sensitive to the choice of the universal ML-test.

\begin{Definition}\label{def-LAY}
Let $\MP U$ be a universal ML-test.
\begin{itemize}
\item $\LAY_{\MP U}$ is the multi-valued function $\MLR \rightrightarrows \omega$ defined by $\LAY_{\MP U}(X) = \{i : X \notin \MP{U}_i\}$.

\item $\RD_{\MP U}$ is the single-valued function $\MLR \rightarrow \omega$ defined by $\RD_{\MP U}(X) = \rd_{\MP U}(X)$.
\end{itemize}
\end{Definition}

We intend the natural representations of $\MLR$ and $\omega$.  $\MLR$ is represented by the identity function on the domain $\MLR$ (i.e., the characteristic functions of ML-random sets).  For $\omega$, we use the identification of $n$ and $\{n\}$.  Thus $\omega$ is represented by the function $\{n\} \mapsto n$ whose domain is the set of characteristic functions of singleton subsets of $\omega$.  In full technicality, the realizers of $\LAY_{\MP U}$ are exactly the functions $\Gamma \colon \MLR \imp 2^\omega$ such that, for all $X \in \MLR$, $\Gamma(X) = \chi_{\{i\}}$ for some $i$ with $X \notin \MP{U}_i$.  For convenience, we often think of such a realizer $\Gamma$ more simply as a function $\MLR \imp \omega$ such that $(\forall X \in \MLR)(X \notin \MP{U}_{\Gamma(X)})$.  Similarly, we think of a realizer for $\RD_{\MP U}$ as the function $\rd_{\MP U}$.  Thus, in a sense, $\RD_{\MP U}$ is the same object as $\rd_{\MP U}$, but for the sake of conceptual clarity we use the notation `$\rd_{\MP U}$' to refer to the $\MP U$-randomness deficiencies of ML-random sets, and we use the notation `$\RD_{\MP U}$' to refer to the mathematical task of determining the $\MP U$-randomness deficiencies of ML-random sets.

In contrast to Corollary~\ref{cor-LWCDependsOnTest}, which states that the notion of layerwise computability depends on the universal ML-test used to define it, the strong Weihrauch degree of $\LAY_{\MP U}$ is independent of the universal ML-test $\MP U$.  Therefore we are henceforth justified in writing $\LAY$ in place of $\LAY_{\MP U}$ in the context of both the Weihrauch degrees and the strong Weihrauch degrees.

\begin{Proposition}\label{prop-LAYisLAY}
If $\MP U$ and $\MP V$ are universal ML-tests, then $\LAY_{\MP V} \leqsW \LAY_{\MP U}$.
\end{Proposition}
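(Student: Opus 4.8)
The plan is to produce a single Turing functional $\Phi$ so that $(\Phi,\id)$ is a strong Weihrauch reduction of $\LAY_{\MP V}$ to $\LAY_{\MP U}$: for every realizer $\Gamma$ of $\LAY_{\MP U}$ and every $X\in\MLR$, the value $\Gamma(\Phi(X))$ — which is some $i$ with $\Phi(X)\notin\MP U_i$ — will already satisfy $X\notin\MP V_i$, so that the identity post-processor turns it into a valid answer of $\LAY_{\MP V}$ on $X$. In light of Section~\ref{sec-NonOptimal}, $\Phi=\id$ is hopeless in general: by Theorem~\ref{thm-NotOptimal} there need be no function $h$ at all with $\MP V_{h(i)}\subseteq\MP U_i$, so $\Phi$ must genuinely relocate $X$ inside $2^\omega$.

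I would construct $\Phi$ as follows. On oracle $X$, build $\Phi(X)$ from left to right, maintaining a finite string $\rho$, initially empty, which is only ever extended and which is at every stage an initial segment of the final output $\Phi(X)$. Dovetail a search, over all $i$, for a basic cone enumerated into $\MP V_i$ that is an initial segment of $X$ (the semidecidable event ``$X\in\MP V_i$''). When such a cone first appears for an index $i$ not yet handled, mark $i$ handled and search the enumeration of $\MP U_i$ for a string $\tau\succeq\rho$ with $[\tau]\subseteq\MP U_i$; such a $\tau$ exists because $\rho\conc 0^\omega$ is computable, hence not ML-random, hence lies in $\overline\MLR\subseteq\MP U_i$, so $[\rho]\cap\MP U_i$ is nonempty and open; set $\rho:=\tau$. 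On every other step, and whenever a handling step leaves $\rho$ unchanged, append to $\rho$ the bit $X(|\rho|)$. Each step strictly lengthens $\rho$ and every handling search terminates, so $\Phi$ is a total computable functional; moreover $\Phi(X)$ agrees with $X$ at every position not belonging to one of the blocks inserted by handling steps.

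The verification rests on two points. First, an invariant: if $i_1,\dots,i_r$ are the indices handled so far, then $[\rho]\subseteq\MP U_{i_1}\cap\dots\cap\MP U_{i_r}$, immediately by induction on the handling steps. Second — and this is the heart of the matter — for $X\in\MLR$ the set $S_X:=\{i:X\in\MP V_i\}$ is finite: otherwise $X$ belongs to every set $\MP T_k:=\bigcup_{m\geq 2k}\MP V_m$ (taking $\MP T_0:=2^\omega$), and $(\MP T_k)_{k\in\omega}$ is a Martin-L\"of test because $\lambda(\MP T_k)\leq\sum_{m\geq 2k}2^{-m}=2^{1-2k}\leq 2^{-k}$ for $k\geq 1$, contradicting $X\in\MLR$. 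Consequently, for $X\in\MLR$ only the finitely many indices in $S_X$ are ever handled, so from some stage on $\Phi$ merely copies bits of $X$; hence $\Phi(X)$ agrees with $X$ outside a finite set and is therefore ML-random, while $[\rho]$ stabilizes to a cone contained in $\bigcap_{i\in S_X}\MP U_i$, so that $\Phi(X)\in\MP U_i$ whenever $X\in\MP V_i$.

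The reduction is then correct: for $X\in\MLR$ we have $\Phi(X)\in\MLR$, so $\Gamma(\Phi(X))$ is defined and equals some $i$ with $\Phi(X)\notin\MP U_i$; the previous paragraph forces $X\notin\MP V_i$, i.e.\ $i\in\LAY_{\MP V}(X)$, and the post-processor is the identity. I expect the main obstacle to be simply locating the right $\Phi$: the key realization is that a non-random $X$ may lie arbitrarily deep in $\MP V$, whereas a random $X$ lies in only finitely many levels $\MP V_i$ — which is exactly what makes the finite intersection $\bigcap_{i\in S_X}\MP U_i$ a nonempty open set into which $\Phi$ can safely ``park'' $\Phi(X)$, deep enough to defeat every possible answer of $\Gamma$ yet (by copying a tail of $X$) still ML-random.
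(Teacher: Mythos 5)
Your proof is correct and follows essentially the same strategy as the paper's: both constructions relocate $X$ to a randomness-preserving modification of itself that lands in $\MP U_i$ for every $i$ with $X\in\MP V_i$, so that the identity post-processor turns any $\LAY_{\MP U}$-answer into a $\LAY_{\MP V}$-answer. The paper organizes the ``only finitely many insertions'' argument via the auxiliary universal test $\MP V'_i=\bigcup_{j>i}\MP V_j$ and outputs $\sigma^\smf X$ instead of a finite modification, but these are cosmetic differences from your dovetailed construction and your test $\MP T_k$.
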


\begin{proof}
We describe a Turing functional $\Phi$ such that $\Phi(\MLR) \subseteq \MLR$ and
\begin{align*}
(\forall X \in \MLR)(\forall i)(i \in \LAY_{\MP U}(\Phi(X)) \rightarrow i \in \LAY_{\MP V}(X)).
\end{align*}
Hence, by letting $\Psi$ be the identity function, we have that $\LAY_{\MP V} \leqsW \LAY_{\MP U}$.

First define an auxiliary universal ML-test $\MP{V}'$ by $\MP{V}_i' = \bigcup_{j > i}\MP{V}_i$ for each $i \in \omega$.  It is easy to see that $\MP{V}'$ is indeed a universal ML-test.  Furthermore, $\MP{V}'$ has the property that if $X \in \MLR$ and $i > \rd_{\MP{V}'}(X)$, then $i \in \LAY_{\MP V}(X)$ (simply because by definition $X \notin \MP{V}_{\rd_{\MP{V}'(X)}}' = \bigcup_{i > \rd_{\MP{V}'}(X)}\MP{V}_n$).

Given $X \in 2^\omega$, to compute $\Phi(X)$, output the bits of $X$ while searching for a stage $s_0$ such that $X \in \MP{V}'_{0,s_0}$.  If $s_0$ is found, let $\sigma_0$ be the output of $\Phi(X)$ so far, and search for a string $\tau_0$ such that $[{\sigma_0}^\smf\tau_0] \subseteq \MP{U}_0 \cap \MP{U}_1$.  Such a $\tau_0$ exists because there are non-ML-random sequences extending $\sigma_0$, and, as a universal test, $\MP U$ must capture all of these.  Append $\tau_0$ to the current output of $\Phi(X)$ (so that it becomes ${\sigma_0}^\smf\tau_0$), and restart outputting the bits of $X$ from the beginning.  While $\Phi(X)$ is outputting the bits of ${\sigma_0}^\smf{\tau_0}^\smf X$, search for a stage $s_1$ such that $X \in \MP{V}'_{1,s_1}$.  If $s_1$ is found, let $\sigma_1$ be the output of $\Phi(X)$ so far, and search for a $\tau_1$ such that $[{\sigma_1}^\smf\tau_1] \subseteq \MP{U}_0 \cap \MP{U}_1 \cap \MP{U}_2$, which again exists because there are non-ML-random sequences extending $\sigma_1$.  Append $\tau_1$ to the current output of $\Phi(X)$ (so that it becomes ${\sigma_1}^\smf\tau_1$), and restart outputting the bits of $X$ from the beginning.  Continue in this way, now searching for $s_2$ such that $X \in \MP{V}'_{2,s_2}$ and so on.  This defines $\Phi$.

Now consider an $X \in \MLR$, and let $v = \rd_{\MP{V}'}(X)$.  As $X \notin \MP{V}'_v$, the output of $\Phi(X)$ is a sequence of the form $\sigma^\smf X$, which is in $\MLR$ because $X$ is in $\MLR$.  Moreover, the construction clearly ensures that if $i \leq v$, then $\Phi(X) \in \MP{U}_i$.  Hence $\rd_{\MP U}(\Phi(X)) > \rd_{\MP{V}'}(X)$.  Therefore, if $i \in \LAY_{\MP U}(\Phi(X))$, then $i \geq \rd_{\MP U}(\Phi(X)) > \rd_{\MP{V}'}(X)$, so $i \in \LAY_{\MP V}(X)$ as desired.
\end{proof}

\begin{Remark}\label{rmk-LAYisLAY}
In the last line of the proof of Proposition~\ref{prop-LAYisLAY}, the fact that $i \in \LAY_{\MP U}(\Phi(X))$ only matters insofar as it implies that $i \geq \rd_{\MP U}(\Phi(X))$.  For a universal ML-test $\MP U$, let $\LAY^\alt_{\MP U}$ be the multi-valued function $\MLR \rightrightarrows \omega$ defined by $\LAY^\alt_{\MP U}(X) = \{i : i \geq \rd_{\MP U}(X)\}$.  Then for any universal ML-tests $\MP U$ and $\MP V$, we have that $\LAY_{\MP V} \leqsW \LAY^\alt_{\MP U}$ because the proof of  Proposition~\ref{prop-LAYisLAY} goes through with $\LAY^\alt_{\MP U}$ in place of $\LAY_{\MP U}$.  As clearly $\LAY^\alt_{\MP U} \leqsW \LAY_{\MP U}$, we have that $\LAY_{\MP U} \equivsW \LAY_{\MP V} \equivsW \LAY^\alt_{\MP U} \equivsW \LAY^\alt_{\MP V}$ for every pair $\MP U$ and $\MP V$ of universal ML-tests.  Plainly stated, given a universal ML-test $\MP U$ and an $X \in \MLR$, determining an $i$ such that $X \notin \MP{U}_i$ is the same as determining an upper bound for $\rd_{\MP U}(X)$, up to strong Weihrauch equivalence.
\end{Remark}

Furthermore, $\LAY_{\MP U}$ and $\RD_{\MP V}$ are Weihrauch-equivalent for any two universal ML-tests.  Thus in the context of the Weihrauch degrees we may unambiguously write either $\LAY$ or $\RD$ for any function $\LAY_{\MP U}$ or $\RD_{\MP U}$ for any universal ML-test $\MP U$.

\begin{Theorem}\label{thm-LAYisRD}
Let $\MP U$ and $\MP V$ be universal ML-tests.  Then $\RD_{\MP V} \equivW \LAY_{\MP U}$.
\end{Theorem}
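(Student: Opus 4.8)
The plan is to prove $\RD_{\MP V} \equivW \LAY_{\MP U}$ by establishing the two reductions separately. For the easy direction $\LAY_{\MP U} \leqW \RD_{\MP V}$, I would use Proposition~\ref{prop-LAYisLAY} together with the trivial observation that $\LAY_{\MP V} \leqsW \RD_{\MP V}$ (a precise value is in particular an upper bound, and $\rd_{\MP V}(X)$ is an admissible member of $\LAY_{\MP V}(X)$). Indeed, a realizer for $\RD_{\MP V}$ returns $\rd_{\MP V}(X)$, which satisfies $X \notin \MP V_{\rd_{\MP V}(X)}$, so $\rd_{\MP V}(X) \in \LAY_{\MP V}(X)$; composing with Proposition~\ref{prop-LAYisLAY} gives $\LAY_{\MP U} \equivsW \LAY_{\MP V} \leqsW \RD_{\MP V}$, hence $\LAY_{\MP U} \leqW \RD_{\MP V}$.

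The substantive direction is $\RD_{\MP V} \leqW \LAY_{\MP U}$. By Remark~\ref{rmk-LAYisLAY} (or Proposition~\ref{prop-LAYisLAY} applied the other way) it suffices to show $\RD_{\MP V} \leqW \LAY_{\MP V}$ for a single conveniently chosen universal test $\MP V$, since all $\LAY_{\MP W}$ are strongly Weihrauch equivalent. So I would work directly with $\LAY_{\MP V}$ and aim to compute the \emph{exact} deficiency from an arbitrary \emph{upper bound}. The key point is that, once we are told some $i$ with $X \notin \MP V_i$, the value $\rd_{\MP V}(X) = \min\{k : X \notin \MP V_k\}$ can be determined by checking, for each $k < i$, whether $X \in \MP V_k$: each $\MP V_k$ is effectively open, so $X \in \MP V_k$ is a $\Sigma^0_1(X)$ event, and for $k < \rd_{\MP V}(X)$ it will eventually fire, while for $\rd_{\MP V}(X) \le k < i$ it never fires. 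The difficulty is that we only have finitely much time and cannot wait forever on a $\Sigma^0_1$ event that may never converge. This is exactly the obstacle the earlier machinery is designed to circumvent: I would use the same trick as in Proposition~\ref{prop-LAYisLAY}, namely pass to $\MP V'_i = \bigcup_{j>i}\MP V_j$ and, more importantly, build the $\Phi$-preprocessing so that the input is padded with non-random blocks that make deficiencies \emph{strict}. Concretely, the plan is: given $X$, the preprocessor $\Phi$ copies $X$ while searching for the stage witnessing $X \in \MP V'_{0}$, then $X\in\MP V'_1$, etc., inserting between blocks a finite non-random string pushing the image deep into $\MP U_0 \cap \cdots \cap \MP U_m$; this guarantees that $\rd_{\MP U}(\Phi(X))$ strictly exceeds $\rd_{\MP V'}(X)$, and conversely any upper bound $i$ returned by $\LAY_{\MP U}$ on $\Phi(X)$ gives $i > \rd_{\MP V'}(X) \ge \rd_{\MP V}(X)$. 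The postprocessor $\Psi$, having access to the original $X$ and to $i$, then runs the bounded search: for $k = 0, 1, \dots, i-1$ it dovetails the enumerations of $\MP V_0, \dots, \MP V_{i-1}$ on $X$ and outputs the least $k$ for which it never sees $X \in \MP V_k$.

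The one genuine subtlety — the main obstacle — is making the bounded search in $\Psi$ actually halt. The naive search "find the least $k < i$ with $X \notin \MP V_k$" is only $\Pi^0_1(X)$ and need not be computable even relative to $X$ and $i$. The fix is to exploit that $i$ is not merely \emph{some} bound but can be taken to be one where the margin is visible: by first reducing to $\LAY^\alt_{\MP V'}$ and arranging (as in Remark~\ref{rmk-LAYisLAY}) that the returned $i$ satisfies $X \notin \MP V'_{i-1}$ as well — equivalently, that for every $k$ with $\rd_{\MP V}(X) \le k \le i-1$ we have $X \notin \MP V_k$, which is automatic from nestedness once we insist $\MP V$ is a nested universal test — the search becomes: enumerate $\MP V_0,\dots,\MP V_{i-1}$ on $X$ until, for each $k$, either $X$ enters $\MP V_k$ or we have confirmed (via the bound) it will not; since $X$ lies in $\MP V_0 \supseteq \cdots \supseteq \MP V_{\rd_{\MP V}(X)-1}$ and outside $\MP V_{\rd_{\MP V}(X)} \supseteq \cdots \supseteq \MP V_{i-1}$, dovetailing will eventually witness $X \in \MP V_k$ for all $k < \rd_{\MP V}(X)$, at which point no further $k < i$ can enter (again by nestedness), and we output that $\rd_{\MP V}(X)$. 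Thus, taking $\MP V$ to be nested from the start and using $\MP V' $ exactly as in Proposition~\ref{prop-LAYisLAY}, the reduction goes through, and combining both directions yields $\RD_{\MP V} \equivW \LAY_{\MP U}$ for all universal ML-tests $\MP U, \MP V$.
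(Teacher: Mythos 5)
Your first direction ($\LAY_{\MP U}\leqsW\LAY_{\MP V}\leqsW\RD_{\MP V}$) is exactly the paper's argument and is fine, and your preprocessing idea for the other direction starts out on the right track. But the postprocessor has a genuine gap: it has no computable stopping criterion. After dovetailing the enumerations of $\MP V_0,\dots,\MP V_{i-1}$ on $X$, at every finite stage you have seen $X$ enter $\MP V_0,\dots,\MP V_{m-1}$ but not $\MP V_m$, and you have no way to tell whether $m=\rd_{\MP V}(X)$ or whether $X$ simply has not entered $\MP V_m$ yet. Nestedness only propagates non-membership \emph{upward} (from $\MP V_k$ to $\MP V_{k+1}$), so knowing $X\notin\MP V_{i-1}$ tells you nothing about $\MP V_m$ for $m<i-1$; the phrase ``confirmed (via the bound) it will not'' does not correspond to any event your algorithm can observe. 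The statement that ``dovetailing will eventually witness $X\in\MP V_k$ for all $k<\rd_{\MP V}(X)$, at which point no further $k<i$ can enter'' is true but useless, because the algorithm cannot recognize that this point has been reached. As written, your $\Psi$ never halts (or, if forced to commit, may output a value that is too small).

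The missing idea --- which is how the paper's proof closes exactly this gap --- is to make the number returned by $\LAY_{\MP U}$ serve not merely as an upper bound on the deficiency but as a \emph{stage bound} for the enumerations of the $\MP V_j$. Concretely: when $\Phi(X)$ witnesses $X\in\MP V_{j,s}$ at stage $s$, it pads its output into $\bigcap_{\ell\leq s}\MP U_\ell$ (depth $s$, the stage, rather than depth $j+1$, the index). Then any $k\in\LAY_{\MP U}(\Phi(X))$ satisfies $k\geq\rd_{\MP U}(\Phi(X))>s$ for every stage $s$ at which some membership $X\in\MP V_{j,s}$ with $j<\rd_{\MP V}(X)$ was witnessed, so $X\in\MP V_{j,k}$ for all $j<\rd_{\MP V}(X)$ while $X\notin\MP V_{\rd_{\MP V}(X),k}$. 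The postprocessor $\Psi(\la X,k\ra)$ then simply outputs the least $j$ with $X\notin\MP V_{j,k}$: this is a search through the \emph{finite} stage-$k$ approximations, hence decidable, and it returns exactly $\rd_{\MP V}(X)$. With this modification no nestedness assumption on $\MP V$ is needed at all.
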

\begin{proof}
First, 
%$\LAY_{\MP U} \leqW \RD_{\MP V}$ follows from  because 
we trivially have ${\LAY_{\MP V} \leqsW \RD_{\MP V}}$, and by Proposition~\ref{prop-LAYisLAY} therefore $$\LAY_{\MP U} \leqsW \LAY_{\MP V} \leqsW \RD_{\MP V}.$$  
%So in this direction we have even that $\LAY_{\MP U} \leqsW \RD_{\MP V}$.

For the other direction we prove $\RD_{\MP V} \leqW \LAY_{\MP U}$. The proof is very similar to that of Proposition~\ref{prop-LAYisLAY}. Compute $\Phi$ and $\Psi$ as follows.  $\Phi(X)$ outputs the bits of $X$ while searching for a stage $s$ such that $X \in \MP{V}_{0,s}$.  If $s$ is found, let $\sigma$ be the output of $\Phi(X)$ so far, and search for a string $\tau$ such that $[\sigma^\smf\tau] \subseteq \bigcap_{i \leq s}\MP{U}_i$.  Append $\tau$ to the current output of $\Phi(X)$ (so that it becomes $\sigma^\smf\tau$), and restart outputting the bits of $X$ from the beginning.  Now search for a stage $s$ such that $X \in \MP{V}_{1,s}$.  If $s$ is found, let $\sigma$ be the output of $\Phi(X)$ so far, and search for a $\tau$ such that $[\sigma^\smf\tau] \subseteq \bigcap_{i \leq s}\MP{U}_i$.  Append $\tau$ to the current output of $\Phi(X)$ (so that it becomes $\sigma^\smf\tau$), and restart outputting the bits of $X$ from the beginning.  Continue in this way, now searching for an $s$ such that $X \in \MP{V}_{2,s}$ and so on.  This defines $\Phi$.  $\Psi(\la X, k \ra)$ outputs the least $i$ such that $X \notin \MP{V}_{i,k}$.

To see that $\Phi$ and $\Psi$ witness that $\RD_{\MP V} \leqW \LAY_{\MP U}$, consider an $X \in \MLR$, and let $v = \rd_{\MP V}(X)$.  As $X \notin \MP{V}_v$, the output of $\Phi(X)$ is a sequence of the form $\sigma^\smf X$, which is in $\MLR$ because $X$ is in $\MLR$.  Hence $\Phi(X)$ is in the domain of $\LAY_{\MP U}$.  Let $u = \rd_{\MP U}(\Phi(X))$.  The construction ensures that $u$ is large enough so that if $i < v$, then $X \in \MP{V}_{i,u}$.  Thus if $k \in \LAY_{\MP U}(\Phi(X))$, then $k \geq u$, so $\Psi(\la X, k \ra)$, the least $i$ such that $X \notin \MP{V}_{i,k}$, is exactly $v$ as desired.
\end{proof}

However, Theorem~\ref{thm-LAYisRD} cannot be improved to have $\equivsW$ in place of $\equivW$.

\begin{Proposition}
Let $\MP U$ and $\MP V$ be universal ML-tests.  Then $\RD_{\MP V} \nleqsW \LAY_{\MP U}$.
\end{Proposition}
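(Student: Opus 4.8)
The plan is to show that no strong Weihrauch reduction can work because the output forgetfulness forces the post-processor $\Psi$ to commit to a precise value $\rd_{\MP V}(X)$ based only on the $\LAY_{\MP U}$-answer, and we can arrange for two ML-random inputs that are indistinguishable from the point of view of any fixed pre-processor $\Phi$ at the moment $\Phi$'s output becomes a valid $\LAY_{\MP U}$-instance, yet have different $\MP V$-randomness deficiencies. Concretely, suppose for a contradiction that $\Phi,\Psi$ witness $\RD_{\MP V} \leqsW \LAY_{\MP U}$: for every realizer $\Gamma \vdash \LAY_{\MP U}$ and every $X \in \MLR$ we have $\Psi(\Gamma(\Phi(X))) = \rd_{\MP V}(X)$, and crucially $\Psi$ does not see $X$.

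First I would observe that a single number $i \in \LAY_{\MP U}(\Phi(X))$ is consistent with a whole cone of ML-random sequences: if $\Phi(X)$ and $\Phi(X')$ agree on a long enough initial segment (specifically, agree past the use of the computation certifying $\Phi(X) \notin \MP{U}_i$ for the chosen $i$), then there is a realizer $\Gamma$ that returns the same $i$ on both, and hence $\Psi$ must return the same value on both — so $\rd_{\MP V}(X) = \rd_{\MP V}(X')$. The strategy is therefore to build, via a finite-injury or direct diagonalization against all pairs $\Phi_e,\Psi_e$, two ML-random sequences $X$ and $X'$ (or rather to exhibit for each candidate reduction a pair) with $\rd_{\MP V}(X) \neq \rd_{\MP V}(X')$ but whose $\Phi$-images agree sufficiently far out. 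The cleanest approach: fix the reduction $(\Phi,\Psi)$; feed $\Phi$ an ML-random $Z$ and watch its output. Since the output $\Phi(Z)$ is ML-random when $Z$ is, eventually $\Phi(Z)$ enters $2^\omega \setminus \MP{U}_i$ for $i = \rd_{\MP U}(\Phi(Z))$, certified by a finite initial segment $\rho \prec \Phi(Z)$ with $[\rho] \cap \MP{U}_i = \emptyset$. Now $[\rho]$ has positive measure, so it contains ML-random sequences of every $\MP U$-randomness deficiency $\geq i$; but more to the point, among the ML-random $Z'$ with $\Phi(Z') \succeq \rho$ there should be ones of differing $\MP V$-randomness deficiency. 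The realizer $\Gamma$ that, on any extension of $\rho$, returns $i$, witnesses $i \in \LAY_{\MP U}(\Phi(Z'))$ for all such $Z'$, forcing $\Psi(\Gamma(\Phi(Z'))) = i$-response to be a single fixed number, contradicting that it must equal both $\rd_{\MP V}(Z)$ and $\rd_{\MP V}(Z')$ when these differ.

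The main obstacle is arranging that the set of $Z'$ with $\Phi(Z') \succeq \rho$ genuinely contains sequences of two different $\MP V$-randomness deficiencies, since a priori $\Phi$ could be degenerate (e.g.\ mapping everything into a single $\MP V$-deficiency class) — but $\Phi$ must map $\MLR$ into $\dom(\LAY_{\MP U}) \subseteq \MLR$ and we have freedom in choosing the witnessing $Z$ first and then perturbing. I would handle this by choosing $Z$ inside a small basic clopen set $[\mu]$ on which $\MP{V}$ already does interesting things, using a pair of ML-random continuations giving $\rd_{\MP V}$ values, say, $0$ and $1$ (or more robustly $n$ and $n+1$ for suitable $n$, mimicking the $[S] = S_0 \cup S_1$ trick and the measure-of-$\Pi^0_1$-class argument in the proof of Theorem~\ref{thm-RDnotLWC}); alternatively, one can run the argument uniformly for all pairs $(\Phi_e,\Psi_e)$ simultaneously by a priority construction of $\MP V$ itself — but since the statement is for arbitrary $\MP V$, the argument must instead exploit the structure present in every universal $\MP V$ (namely that $\bigcap_i \MP{V}_i = \overline{\MLR}$ is null but each $\MP{V}_i$ is nonempty and the deficiency function is unbounded). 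I expect the write-up to follow the template of Proposition~\ref{prop-LAYisLAY}'s proof in reverse: the positive reduction built there used $\Psi$ with oracle access to $X$ to read off the least $i$ with $X \notin \MP{V}_{i,k}$; the impossibility of doing this strongly is precisely that without $X$, the stage parameter $k$ coming out of $\LAY_{\MP U}$ does not pin down $\rd_{\MP V}(X)$, and a short combinatorial lemma making "two inputs agreeing past the relevant use have a common $\LAY_{\MP U}$-answer" precise will close the argument.
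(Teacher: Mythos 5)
Your opening intuition is exactly the right one --- in a strong reduction $\Psi$ sees only the number output by the realizer, so a single number must decode to the $\RD_{\MP V}$-answer for every input it is consistent with --- but the way you try to exploit it introduces an obstacle that the actual proof never faces, and you do not overcome that obstacle. You insist on finding two ML-random inputs $Z, Z'$ whose $\Phi$-images \emph{agree past the use} of some computation (i.e.\ both extend a cone $[\rho]$ with $[\rho]\cap\MP U_i=\emptyset$), and then you correctly observe that you have no control over whether $\{Z' \in \MLR : \Phi(Z') \succeq \rho\}$ contains sequences of two different $\MP V$-deficiencies --- $\Phi$ could be wildly non-injective, and perturbing $Z$ inside a clopen set $[\mu]$ can change $\Phi(Z)$ entirely, so your ``pair of continuations with deficiencies $0$ and $1$'' need not land back in $[\rho]$. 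Your proposal ends by conceding this and hoping a ``short combinatorial lemma'' closes the gap; as written, the argument is incomplete.

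The missing idea is that no agreement between $\Phi(X)$ and $\Phi(Y)$ is needed at all, because $\LAY_{\MP U}$ is multi-valued and (after normalizing $\MP U$ to be nested, which is harmless by Proposition~\ref{prop-LAYisLAY}) \emph{every} $n \geq \rd_{\MP U}(W)$ is a legitimate answer for $W$. So take \emph{any} two $X, Y \in \MLR$ with $\rd_{\MP V}(X) \neq \rd_{\MP V}(Y)$ (such a pair exists since $\MP V_0 \neq \emptyset$ has positive measure while $\lambda(\MP V_1) \leq 1/2$), and let $n \geq \max(\rd_{\MP U}(\Phi(X)), \rd_{\MP U}(\Phi(Y)))$. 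There is a realizer $\Gamma \vdash \LAY_{\MP U}$ with $\Gamma(\Phi(X)) = \Gamma(\Phi(Y)) = n$ --- realizers need not be computable, and $n$ is a valid answer for both images by nestedness --- whence $\rd_{\MP V}(X) = \Psi(n) = \rd_{\MP V}(Y)$, a contradiction. This two-line argument is the paper's proof; your cone-and-use machinery is not only unnecessary but is precisely what leaves you unable to finish.
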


\begin{proof}
In light of Proposition~\ref{prop-LAYisLAY}, we may assume that $\MP U$ is nested.  Suppose for a contradiction that $\RD_{\MP V} \leqsW \LAY_{\MP U}$, and let $\Phi$ and $\Psi$ witness the reduction.  That is, if $X \in \MLR$, then $\Phi(X) \in \MLR$, and if $n \geq \rd_{\MP U}(\Phi(X))$, then $\Psi(n) = \rd_{\MP V}(X)$.  Now let $X, Y \in \MLR$ be such that $\rd_{\MP V}(X) \neq \rd_{\MP V}(Y)$.  Let $n \geq \max(\rd_{\MP U}(\Phi(X)), \rd_{\MP U}(\Phi(Y)))$.  Then $\rd_{\MP V}(X) = \Psi(n) = \rd_{\MP V}(Y)$, a contradiction.
\end{proof}

We were unable to determine whether or not $\RD_{\MP U} \equivsW \RD_{\MP V}$ for every pair $\MP U$ and $\MP V$ of universal ML-tests, so we leave it as a question.

\begin{Question}
Let $\MP U$ and $\MP V$ be universal ML-tests.  Does $\RD_{\MP U} \equivsW \RD_{\MP V}$ hold?
\end{Question}

Now we show that $\LAY$ enjoys various idempotence properties in the Weihrauch degrees and in the strong Weihrauch degrees.  These properties are ultimately due to the fact that, given a universal ML-test $\MP U$ and two ML-random sequences $X$ and $Y$, we can effectively generate a ML-random sequence $Z$ such that $\rd(Z) \geq \max\{\rd(X), \rd(Y)\}$.

\begin{Proposition}\label{prop-LAYxLAY}
$\LAY \times \LAY \leqsW \LAY$.
\end{Proposition}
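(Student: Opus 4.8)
The plan is to reduce $\LAY \times \LAY$ to a single call of $\LAY_{\MP U}$ for a suitable universal ML-test $\MP U$, exploiting the fact established in the preamble that we may choose $\MP U$ freely up to strong Weihrauch equivalence. Given a pair $(X,Y) \in \MLR \times \MLR$, the preprocessing functional $\Phi$ should produce a single $Z \in \MLR$ whose $\MP U$-randomness deficiency simultaneously dominates the deficiencies of $X$ and $Y$ with respect to a fixed universal test; then a single upper bound for $\rd_{\MP U}(Z)$ yields upper bounds for both, and by Remark~\ref{rmk-LAYisLAY} upper bounds are all $\LAY$ needs to provide.

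The construction of $Z$ should follow the template of the proof of Proposition~\ref{prop-LAYisLAY}, run in parallel for $X$ and $Y$. Fix auxiliary universal tests as in that proof so that we are really trying to dominate $\rd_{\MP V'}(X)$ and $\rd_{\MP V'}(Y)$ for some fixed universal $\MP V'$. The functional $\Phi$ outputs bits of (say) $X$ until it sees $X$ enter $\MP{V}'_0$, at which point it inserts a finite ``plug'' string $\tau$ driving the current output into $\MP{U}_0 \cap \MP{U}_1$, then restarts outputting $X$; similarly after seeing $X$ enter $\MP{V}'_1$ it plugs into $\MP{U}_0 \cap \MP{U}_1 \cap \MP{U}_2$, and so on; and it interleaves the same process watching $Y$ against $\MP{V}'$. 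Each such plug exists because non-ML-random extensions of any string must be captured by the universal test $\MP U$. The key point is that if $v = \max\{\rd_{\MP V'}(X), \rd_{\MP V'}(Y)\}$, then only finitely many plugs are inserted, so $\Phi(X,Y)$ has the form $\sigma^\smf X$ (eventually it just copies $X$), hence lies in $\MLR$, and by construction $\rd_{\MP U}(\Phi(X,Y)) > v \geq \rd_{\MP V'}(X), \rd_{\MP V'}(Y)$. The postprocessing functional $\Psi$, on receiving the original pair $(X,Y)$ together with an $i \in \LAY_{\MP U}(\Phi(X,Y))$, simply outputs the pair $(i,i)$; since $i \geq \rd_{\MP U}(\Phi(X,Y)) > \rd_{\MP V'}(X)$ we get $i \in \LAY_{\MP V}(X)$, and likewise $i \in \LAY_{\MP V}(Y)$, so $(i,i) \in (\LAY \times \LAY)(X,Y)$. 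Note $\Psi$ does use the original input, but only to know which coordinates are being targeted; in fact it does not even need it here, which gives strong Weihrauch reducibility.

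The main obstacle is bookkeeping the interleaving cleanly: one must make sure that the plug strings inserted on behalf of $X$ do not interfere with the running copy of $Y$ being threaded through (and vice versa), and that after the last plug the output genuinely stabilizes to a tail copy of a single fixed sequence rather than an alternating mess. The cleanest way to handle this is to have $\Phi$ maintain the invariant that its current output is always an initial segment of (current prefix)$^\smf X$ for the ``active'' copy, restart the copy whenever a plug is appended, and observe that each plug is appended at most once per level per sequence; since there are at most $v+1$ relevant levels for each of $X$ and $Y$, only finitely many appends occur and the construction converges. One should also double-check the measure condition is never an issue — it is not, since $\Phi$ is a Turing functional producing an element of $2^\omega$, not a test — and that the plug into $\bigcap_{j \le k}\MP{U}_j$ can be chosen by an effective search, which it can because $\overline{\MLR} = \bigcap_i \MP{U}_i$ meets every basic open set and membership of a basic clopen set in $\MP{U}_j$ is c.e.
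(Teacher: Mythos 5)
Your proposal is correct and follows essentially the same route as the paper's proof sketch: copy the bits of $X$ while watching both $X$ and $Y$ against a fixed universal test, insert finite ``plug'' strings into $\bigcap_{j\le k}\MP U_j$ whenever a new deficiency level is witnessed for either sequence (so the output is $\sigma^\smf X$ with $\rd(\sigma^\smf X)\geq\max(\rd(X),\rd(Y))$), and let $\Psi$ be $i\mapsto\la i,i\ra$. The additional bookkeeping you describe is the right way to make the sketch precise, and your observation that $\Psi$ needs no access to the original input correctly yields the strong reduction.
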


\begin{proof}[Proof sketch]
Fix a nested universal ML-test $\MP U$.  Given $X, Y \in 2^\omega$, compute $\Phi(\la X, Y \ra)$ in the style of Proposition~\ref{prop-LAYisLAY} by copying the bits of $X$ and increasing $\rd(\Phi(\la X, Y \ra))$ whenever increases in $\rd(X)$ or $\rd(Y)$ are noticed.  Let $\Psi$ be $n \mapsto \la n, n \ra$.  If $X$ and $Y$ are ML-random, then $\Phi(X,Y) = \sigma^\smf X$ for some $\sigma$ for which $\rd(\sigma^\smf X) \geq \max(\rd(X), \rd(Y))$.
\end{proof}

It follows that $\LAY \times \LAY \equivsW \LAY$.
% and that $\LAY \times \LAY \equivW \LAY$.

$\LAY$ is also strongly Weihrauch-equivalent to its parallelization when restricting to nested universal ML-tests and sequences of ML-random sets of uniformly bounded randomness deficiency.  Let $\MP U$ be a universal ML-test.  Let $\widetilde{\LAY_{\MP U}}$ be the restriction of $\widehat{\LAY_{\MP U}}$ to the domain consisting of all (codes for) sequences $(X_i)_{i \in \omega}$ in $\MLR^\omega$ such that $(\exists n)(\forall m \geq n)(\forall i)(X_i \notin \MP{U}_m)$.  Note that if $\MP U$ is nested, this condition is equivalent to $\exists n \forall i (\rd_{\MP U}(X_i) \leq n)$.

\begin{Proposition}
Let $\MP U$ be a universal ML-test.  Then $\widetilde{\LAY_{\MP U}} \leqsW \LAY$.
\end{Proposition}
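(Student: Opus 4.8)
The plan is to extend the coding technique of Propositions~\ref{prop-LAYisLAY} and~\ref{prop-LAYxLAY}: instead of absorbing the randomness deficiency of one or two ML-random sequences into a single ML-random sequence $Z$, we absorb those of the whole sequence $(X_i)_{i\in\omega}$ at once. Since all the functions $\LAY_{\MP{W}}$ are strongly Weihrauch equivalent by Proposition~\ref{prop-LAYisLAY}, we may take the target to be $\LAY_{\MP{W}}$ for a \emph{nested} universal ML-test $\MP{W}$. First I would replace $\MP{U}$ by the auxiliary nested universal ML-test $\MP{U}'$ given by $\MP{U}'_m=\bigcup_{j>m}\MP{U}_j$; just as in Proposition~\ref{prop-LAYisLAY}, $\MP{U}'$ is a nested universal ML-test with the key property that $X\in\MLR$ and $k>\rd_{\MP{U}'}(X)$ imply $X\notin\MP{U}_k$. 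Moreover the condition defining the domain of $\widetilde{\LAY_{\MP{U}}}$ is equivalent to the statement that $n^\ast:=\sup_i\rd_{\MP{U}'}(X_i)$ is finite.

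The reduction is witnessed by functionals $\Phi,\Psi$ as follows. On input (a code for) $(X_i)_{i\in\omega}$, the functional $\Phi$ first outputs a string $\tau$ with $[\tau]\subseteq\MP{W}_0$ (such a $\tau$ exists because $\MP{W}$ is universal) and thereafter copies the bits of $X_0$, while in parallel enumerating all the sets $\MP{U}'_m$ and searching for witnesses to statements of the form $X_i\in\MP{U}'_m$. A counter $M$ is maintained, initially $0$. Whenever a new pair $(i,m)$ with $X_i\in\MP{U}'_m$ and $m+1>M$ is discovered, $\Phi$ sets $M:=m+1$, lets $\sigma$ be its current output, appends a string $\tau$ with $[\sigma^\smf\tau]\subseteq\MP{W}_M$ (again possible because $\MP{W}$ is universal), and restarts copying $X_0$ from the beginning. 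The post-processor $\Psi$ sends a number $k$ to the constant sequence $(k,k,k,\dots)$.

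To verify the reduction, note first that because $\MP{U}'$ is nested, $X_i\in\MP{U}'_m$ holds for exactly the $m<\rd_{\MP{U}'}(X_i)$, so every demand $m+1$ that is ever discovered satisfies $m+1\leq\rd_{\MP{U}'}(X_i)\leq n^\ast$; hence $M$ takes only finitely many values and is updated only finitely often. Consequently $Z:=\Phi((X_i)_i)$ has the form $\rho^\smf X_0$ for some finite string $\rho$, so $Z\in\MLR$ and $Z$ lies in the domain of $\LAY_{\MP{W}}$. Furthermore $Z\in\MP{W}_0$ by the first step; and for each $i$ with $\rd_{\MP{U}'}(X_i)\geq 1$ the pair $(i,\rd_{\MP{U}'}(X_i)-1)$ is eventually discovered, with demand $\rd_{\MP{U}'}(X_i)$, so at the end $Z\in\MP{W}_{\rd_{\MP{U}'}(X_i)}$ (either because this demand triggered an update of $M$, or because $M$ already exceeded it, using that $\MP{W}$ is nested). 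In every case $\rd_{\MP{W}}(Z)>\rd_{\MP{U}'}(X_i)$. Therefore, for any realizer $\Gamma$ of $\LAY_{\MP{W}}$ and with $k=\Gamma(Z)$ we have $k\geq\rd_{\MP{W}}(Z)>\rd_{\MP{U}'}(X_i)$ for all $i$, hence $X_i\notin\MP{U}_k$ for all $i$ by the key property of $\MP{U}'$; thus $\Psi(k)=(k,k,\dots)$ is a correct output and $\Psi\circ\Gamma\circ\Phi\vdash\widetilde{\LAY_{\MP{U}}}$.

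The main obstacle is ensuring that $\Phi$ stabilizes, i.e.\ that $M$ is updated only finitely often. A naive search would re-act to the discoveries $(i,0)$ coming from \emph{each} of the (possibly infinitely many) $X_i$ with positive $\MP{U}'$-deficiency, causing $\Phi$ to restart copying $X_0$ infinitely often and so never output an ML-random sequence. The guard ``only act when $m+1>M$'' resolves this, and it is sound precisely because the domain restriction forces $n^\ast<\infty$; the remaining points---that $\MP{U}'$ is a test, the existence of the clopen sets inside the $\MP{W}_M$, and the computability of the enumerations---are routine.
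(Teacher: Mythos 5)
Your proof is correct and follows essentially the same strategy as the paper's: copy the bits of $X_0$ while dovetailing through the tests, and whenever a new deficiency demand is discovered, splice in a string lying deep enough in the target test so that the resulting $Z=\rho^\smallfrown X_0$ is ML-random with $\rd(Z)$ dominating all the $\rd(X_i)$, then post-process with the constant sequence. Your use of the nested auxiliary test $\MP{U}'$ and the monotone counter $M$ is just a cleaner bookkeeping of the paper's guard ``$[\sigma_s]\nsubseteq\bigcap_{m\leq n}\MP{U}_{m,s}$'', and you correctly identify and handle the one real danger (infinitely many restarts).
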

\begin{proof}[Proof sketch]

Given $\vv{X} = (X_i)_{i \in \omega}$ in $\MLR^\omega$ such that $(\exists n)(\forall m \geq n)(\forall i)(X_i \notin \MP{U}_m)$, we need only have $\Phi(\vv X)$ produce an ML-random set $Y$ such that $\rd(Y) \geq n$, for then we can let $\Psi$ be the functional that maps $n$ to the constant sequence of $\omega$ copies of $n$.

Compute $\Phi(\vv X)$ in the style of Proposition~\ref{prop-LAYisLAY} by copying the bits of $X_0$ and watching the sets $X_i$ enter the components of $\MP U$ in a dovetailing fashion.  If at stage $s = \la i, n, t \ra$ we notice that $X_i \in \MP{U}_{n,t}$ but that $[\sigma_s] \nsubseteq \bigcap_{m \leq n} \MP{U}_{m,s}$ (where $\sigma_s$ is the string output by $\Phi(\vv X)$ so far), then extend $\sigma_s$ to a ${\sigma_s}^\smf \tau$ such that $[{\sigma_s}^\smf \tau] \subseteq \bigcap_{m \leq n}\MP{U}_m$, and restart outputting the bits of $X_0$ from the beginning.  If indeed $\vv X$ satisfies $(\exists n)(\forall m \geq n)(\forall i)(X_i \notin \MP{U}_m)$, then by some stage $\Phi(\vv X)$ will have produced an output $\sigma$ such that $[\sigma] \subseteq \bigcap_{m < n}\MP{U}_m$ for a witnessing $n$.  Thus in the end $\Phi(\vv X) = \sigma^\smf X_0$, where $\sigma^\smf X_0 \in \MLR$ with $\rd(\sigma^\smf X_0) \geq n$.
\end{proof}

\begin{Theorem}\label{thm-LAYstarLAYisLAY}
$\LAY \ast \LAY \leqW \LAY$.
\end{Theorem}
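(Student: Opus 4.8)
The plan is to unwind the definition of the compositional product. Since $\LAY \ast \LAY$ is by definition the supremum of the set of all $f_0 \circ g_0$ with $f_0 \leqW \LAY$, $g_0 \leqW \LAY$, and $f_0, g_0$ composable, and since $\LAY$ is an upper bound of any set each of whose members reduces to $\LAY$, it suffices to prove $f_0 \circ g_0 \leqW \LAY$ for every such composable pair. Fix a nested universal ML-test $\MP U$ together with a fixed monotone stage-by-stage approximation $(\MP{U}_{i,s})_{s \in \omega}$ of each $\MP{U}_i$; by Proposition~\ref{prop-LAYisLAY} we may assume that every ML-test below is this $\MP U$, so that ``$\rd$'' abbreviates ``$\rd_{\MP U}$''. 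Let $\Phi_{g_0}, \Psi_{g_0}$ and $\Phi_{f_0}, \Psi_{f_0}$ be functionals witnessing $g_0 \leqW \LAY$ and $f_0 \leqW \LAY$ respectively. Because $\rd$ is itself a (non-computable) realizer of $\LAY$, on any input $q$ of $f_0 \circ g_0$ the honest play of the composition computes $a := \Phi_{g_0}(q) \in \MLR$, is handed $d_0 := \rd(a)$, computes $y_0 := \Psi_{g_0}(\la q, d_0 \ra) \in g_0(q)$ and then $b := \Phi_{f_0}(y_0) \in \MLR$, is handed $d_1 := \rd(b)$, and returns $\Psi_{f_0}(\la y_0, d_1 \ra) \in (f_0 \circ g_0)(q)$. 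Thus it is enough to produce, computably from $q$ alone, a single $Z \in \MLR$ and a functional $\Psi$ (which, the reduction being $\leqW$ and not $\leqsW$, may also consult $q$) such that every $m \geq \rd(Z)$ lets $\Psi$ recover $d_0$ and $d_1$.

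I would construct $Z = \Phi(q)$ by merging the splicing device from the proof of Theorem~\ref{thm-LAYisRD} with a dovetailing over provisional guesses for $\rd(a)$. Keep a finite string $\rho$ (the part of $Z$ committed so far) and a counter $\kappa$ recording how many of the memberships $a \in \MP{U}_0, a \in \MP{U}_1, \ldots$ have so far been confirmed, in order; the ``copied'' content of $Z$ is always the bits of $b^{(\kappa)} := \Phi_{f_0}(\Psi_{g_0}(\la q, \kappa \ra))$ for the current $\kappa$. Run two fairly interleaved processes. Process~$0$ hunts for the next confirmation $a \in \MP{U}_\kappa$; at the stage $s$ where it finds one it splices onto $\rho$ a finite $\tau$ with $[\rho^\smf\tau] \subseteq \bigcap_{j \leq s}\MP{U}_j$ (such $\tau$ exists since $\rho^\smf 0^\omega$ is non-ML-random, hence in $\bigcap_j \MP{U}_j$ by universality of $\MP U$), bumps $\kappa$, and restarts the copying with the new $b^{(\kappa)}$. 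Process~$1$, for the current $\kappa$, runs the Theorem~\ref{thm-LAYisRD} procedure on $b^{(\kappa)}$: it hunts the memberships $b^{(\kappa)} \in \MP{U}_0, b^{(\kappa)} \in \MP{U}_1, \ldots$ in order and performs an analogous splice (into $\bigcap_{j \leq s}\MP{U}_j$) at each confirmation; when $\kappa$ bumps it discards its progress and starts afresh on the new $b^{(\kappa)}$.

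Since $\kappa$ is nondecreasing and bounded by $d_0$, it stabilizes at $d_0$ after finitely many stages; fair interleaving guarantees that each too-small value of $\kappa$ causes only finitely many Process~$1$ splices before being bumped, and once $\kappa = d_0$ we have $b^{(d_0)} = b \in \MLR$, so Process~$1$ confirms exactly the $d_1$ memberships $b \in \MP{U}_i$ ($i < d_1$) and then splices no more. Hence only finitely many splices occur, $Z = \rho^\smf b$ for a finite $\rho$ (the remaining bits of $Z$ being a faithful copy of $b$), so $Z \in \MLR$; and if $s^\ast$ denotes the stage of the last splice then $[\rho] \subseteq \bigcap_{j \leq s^\ast}\MP{U}_j$, whence $\rd(Z) \geq s^\ast + 1$. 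Because memberships are confirmed in order and Process~$1$ acts on $b$ only after $\kappa$ has reached $d_0$, this $s^\ast$ dominates every stage at which a membership $a \in \MP{U}_i$ ($i < d_0$) or $b \in \MP{U}_i$ ($i < d_1$) was confirmed. Now let $\Psi(\la q, m \ra)$ recompute $a = \Phi_{g_0}(q)$, set $d_0 := \min\{i : a \notin \MP{U}_{i,m}\}$ (correct because $m \geq \rd(Z) > s^\ast$ bounds all the pertinent confirmation stages, while $a \notin \MP{U}_{d_0,m} \subseteq \MP{U}_{d_0}$), then compute $y_0 = \Psi_{g_0}(\la q, d_0 \ra)$ and $b = \Phi_{f_0}(y_0)$, set $d_1 := \min\{i : b \notin \MP{U}_{i,m}\}$ (correct for the same reason), and output $\Psi_{f_0}(\la y_0, d_1 \ra) \in (f_0 \circ g_0)(q)$.

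The delicate part is exactly the construction of $Z$ and the bound on $\rd(Z)$: one must ensure that provisional, too-small guesses for $\rd(a)$ cannot make Process~$1$ splice infinitely often (this is what forces the fair interleaving, so that such a guess is corrected within finitely many stages), and that the final splice is performed no sooner than every membership fact about $a$ below level $d_0$ and about $b$ below level $d_1$ has been witnessed---this is precisely what validates the single-stage read-offs $d_0 = \min\{i : a \notin \MP{U}_{i,m}\}$ and $d_1 = \min\{i : b \notin \MP{U}_{i,m}\}$, exactly as in the proof of Theorem~\ref{thm-LAYisRD}. The remaining verifications---that $Z$ is a well-defined element of $\MLR$, that $\Psi$ halts, and the measure bookkeeping---are routine manipulations of finite strings.
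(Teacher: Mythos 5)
Your proof is correct, and the underlying technique---simulate the two-stage computation while splicing strings into an output sequence so that its randomness deficiency dominates every stage at which a relevant membership fact is confirmed---is exactly the paper's. The packaging differs: the paper first proves $\LAY \ast \LAY \leqW \RD \times \RD$ by outputting a \emph{pair} $\la Y, W\ra$ (with $Y = \Phi_g(X)$ untouched and $W$ the spliced sequence tracking both deficiency counters), and then invokes $\RD \times \RD \equivW \LAY \times \LAY \equivW \LAY$ from Theorem~\ref{thm-LAYisRD} and Proposition~\ref{prop-LAYxLAY}; you instead build a \emph{single} random $Z$ and recover both exact deficiencies $d_0 = \rd(a)$ and $d_1 = \rd(b)$ in the post-processor via the stage-read-off $\min\{i : \cdot \notin \MP U_{i,m}\}$, i.e., you inline the content of Theorem~\ref{thm-LAYisRD} (twice) rather than citing it. Your version is self-contained and makes one $\LAY$ call explicitly, at the cost of having to argue the fair-interleaving/stabilization of the provisional guess $\kappa$ and that the last splice stage dominates all confirmation stages for both $a$ and $b$; the paper's version delegates that bookkeeping to the already-proved equivalences and so has a shorter verification. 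Both arguments hinge on the same two facts: non-random extensions of any string let one splice into $\bigcap_{j \leq s}\MP U_j$, and a $\leqW$-reduction's post-processor may consult the original input $q$ to recompute $a$, $y_0$, and $b$.
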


\begin{proof}
In fact we prove that $\LAY \ast \LAY \leqW \RD \times \RD$, from which the theorem follows because $\RD \times \RD \equivW \LAY \times \LAY \equivW \LAY$ by Theorem~\ref{thm-LAYisRD} and Proposition~\ref{prop-LAYxLAY}.

Fix any nested universal ML-test $\MP U$.  Let $f \colon (R, \delta_R) \rightrightarrows (S, \delta_S)$ and $g \colon (Q, \delta_Q) \rightrightarrows (R, \delta_R)$ be composable functions on represented spaces such that $f,g \leqW \LAY$.  We show that $f \circ g \leqW \RD \times \RD$.  To this end, let $\Phi_f$, $\Psi_f$, $\Phi_g$, and $\Psi_g$ be Turing functionals such that $\Psi_f \circ \la \id, \Gamma \circ \Phi_f \ra \vdash f$ and $\Psi_g \circ \la \id, \Gamma \circ \Phi_g \ra \vdash g$ whenever $\Gamma \vdash \LAY$.  What we need are Turing functionals $\Phi$ and $\Psi$ such that $\Psi \circ \la \id, \Delta \circ \Phi \ra \vdash f \circ g$ whenever $\Delta \vdash \RD \times \RD$.

Consider a realizer $\Gamma \vdash \LAY$ and an $X \in \dom(g \circ \delta_Q)$.  Then $Y = \Phi_g(X)$ must be ML-random because it must be in $\dom(\Gamma)$.  Likewise, $\Psi_g(\la X, \rd(Y) \ra) \in \dom(f \circ \delta_R)$, so ${Z = \Phi_f(\Psi_g(\la X, \rd(Y) \ra))}$ must be ML-random because it must be in $\dom(\Gamma)$.  

The task of the functional $\Phi$ is, given $X \in \dom(g \circ \delta_Q)$, to produce a pair $\Phi(X) = \la Y, W \ra$, where $Y = \Phi_g(X)$ and $\rd(W) \geq \rd(Z)$.  $\Phi(X)$ computes $Y$ simply by running $\Phi_g(X)$.  $\Phi(X)$ computes $W$ by watching $\rd(Y)$ increase, simulating the computation of $Z$, and watching $\rd(Z)$ increase. Specifically, to compute $W$ given $X$, begin by setting $d_Y = d_Z = 0$, and output the bits of $\Phi_g(X)$ while searching for a stage at which either $\Phi_g(X) \in \MP{U}_{d_Y}$ or $\Phi_f(\Psi_g(\la X, d_Y \ra)) \in \MP{U}_{d_Z}$.  If $\Phi_g(X) \in \MP{U}_{d_Y}$ is witnessed, update $d_Y = d_Y + 1$.  If $\Phi_f(\Psi_g(\la X, d_Y \ra)) \in \MP{U}_{d_Z}$ is witnessed, let $\sigma$ be the initial segment of $W$ that has been produced thus far, and search for a string $\tau$ such that $[\sigma^\smf\tau] \subseteq \MP{U}_{d_Z}$.  Append $\tau$ to the output thus far (so that it becomes $\sigma^\smf\tau$), update $d_Z = d_Z+1$, and restart outputting the bits of $\Phi_g(X)$ from the beginning.  $\Psi$ is the functional $\la X, \la n, m \ra \ra \mapsto \Psi_f(\la \Psi_g(\la X, n \ra), m \ra)$.

Let $\Delta \vdash \RD \times \RD$, and let $X \in \dom(g \circ \delta_Q)$.  We need to show that
\begin{align*}
\delta_S(\Psi(\la X, \Delta(\Phi(X)) \ra)) \in f(g(\delta_Q(X))).
\end{align*}
As discussed above, $Y = \Phi_g(X)$ is ML-random, so as the computation of $\Phi(X)$ proceeds, the variable $d_Y$ increases until it reaches its final value of $n = \rd(Y)$.  $Z = \Phi_f(\Psi_g(\la X, n \ra))$ is also ML-random, so once $d_Y$ reaches $\rd(Y)$, either $d_Z \geq \rd(Z)$ already, in which case $d_Z$ never changes again, or $d_Z < \rd(Z)$, in which case $d_Z$ increases until it reaches its final value of $\rd(Z)$.  Thus $\Phi(X) = \la Y, W \ra$, where $Y = \Phi_g(X)$ and $W = \sigma^\smf Y$ for some string $\sigma$.  $W$ is ML-random because $Y$ is ML-random, and the construction ensures that $\sigma$ satisfies $[\sigma] \subseteq \MP{U}_i$ for all $i < \rd(Z)$, which implies that $\rd(W) \geq \rd(Z)$.

As $\Phi(X) = \la Y, W \ra \in \dom(\Delta)$ and $\Delta \vdash \RD \times \RD$, we have that $\Delta(\Phi(X)) = \la n, m \ra$, where $n = \rd(Y)$ and $m = \rd(W) \geq \rd(Z)$.  Thus $\delta_R(\Psi_g(\la X, n \ra)) \in g(\delta_Q(X)) \subseteq \dom(f)$.  Thus $\Psi_g(\la X, n \ra) \in \dom(f \circ \delta_R)$.  Thus
\begin{align*}
\delta_S(\Psi_f(\la \Psi_g(\la X, n \ra), m \ra)) \in f(\delta_R(\Psi_g(\la X, n \ra))) \subseteq f(g(\delta_Q(X))).  
\end{align*}
As
\begin{align*}
\Psi(\la X, \Delta(\Phi(X)) \ra) = \Psi(\la X, \la n, m \ra \ra) = \Psi_f(\la \Psi_g(\la X, n \ra), m \ra),
\end{align*}
we have the desired $\delta_S(\Psi(\la X, \Delta(\Phi(X)) \ra)) \in f(g(\delta_Q(X)))$.
\end{proof}

\begin{Remark}
At this point it is natural to ask whether the strong Weihrauch analog of Theorem~\ref{thm-LAYstarLAYisLAY} holds.  Unfortunately, the strong Weihrauch degree $\LAY \ast_{\mathrm{sW}} \LAY$ does not exist.  In fact, no two functions $f, g \leqsW \LAY$ can be composed.  Consider first a function $f \colon (R, \delta_R) \rightrightarrows (S, \delta_S)$ that is $\leqsW \LAY$.  We show that no $Y \in \dom(f \circ \delta_R)$ is computable.  Let $\Phi$ and $\Psi$ witness that $f \leqsW \LAY$, and let $\Gamma \vdash \LAY$.  If $Y \in \dom(f \circ \delta_R)$, then $\delta_S(\Psi(\Gamma(\Phi(Y))))$ must be defined.  This means that $\Gamma(\Phi(Y))$ must be defined, so $\Phi(Y)$ must be in $\MLR$.  Thus $Y$ cannot be computable because if it were then $\Phi(Y)$ would be computable as well, which would contradict its ML-randomness.  Now consider a function $g \colon (Q, \delta_Q) \rightrightarrows (R, \delta_R)$ that is $\leqsW \LAY$, let $\Phi$ and $\Psi$ witness that $g \leqsW \LAY$, and let $\Gamma \vdash \LAY$.  Let $X \in \dom(g \circ \delta_Q)$, and let $Y = \Psi(\Gamma(\Phi(X)))$.  Then $\delta_R(Y) \in g(\delta_Q(X))$, so if $f$ and $g$ were composable, we would have that $\delta_R(Y) \in \dom(f)$, which we can rewrite as $Y \in \dom(f \circ \delta_R)$.  However, $\Gamma(\Phi(X))$ is just (the characteristic function of) a number, so $Y$ is computable.  This is a contradiction.
\end{Remark}

Closed choice principles (see Brattka and Gherardi~\cite{MR2760117}) have become important benchmarks in the Weihrauch degrees.  We now relate $\LAY$ to $\C_\N$, the closed choice principle on $\omega$ (the subscript on `$\C_\N$' is `$\N$' instead of `$\omega$' just to match the notation established in previous studies).

\begin{Definition}
$\C_\N \colon \!\!\!\subseteq\! \omega^\omega \rightrightarrows \omega$ is the multi-valued function whose domain is $\{f \in \omega^\omega : \exists n \forall k(f(k) \neq n+1)\}$ and is defined by $\C_\N(f) = \omega \setminus \{n : \exists k(f(k) = n+1)\}$ for every $f \in \dom(\C_\N)$.
\end{Definition}

The intuition behind the definition of $\C_\N$ is that an $f \in \dom(\C_\N)$ codes the complement of a non-empty subset of $\omega$ and that, given such an $f$, $\C_\N(f)$ must produce a member of the set that $f$ codes.  (The reason for `$n+1$' instead of `$n$' in the definition is to allow the constantly $0$ function to code $\omega$.)  We show that although $\C_\N$ is strictly stronger than $\LAY$, the two degrees become Weihrauch-equivalent when the inputs of $\C_\N$ are attached to ML-random sets.
 
\begin{Proposition}\label{prop-LAYvsC}
$\LAY \leqsW \C_\N$ and $\C_\N \nleqW \LAY$.
\end{Proposition}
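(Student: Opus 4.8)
The plan is to prove the two assertions separately: the positive direction $\LAY \leqsW \C_\N$ by an explicit strong Weihrauch reduction, and the negative direction $\C_\N \nleqW \LAY$ by a short remark about the domains of realizers.

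\emph{The reduction $\LAY \leqsW \C_\N$.} By Proposition~\ref{prop-LAYisLAY} and Remark~\ref{rmk-LAYisLAY} it suffices to reduce $\LAY_{\MP{U}}$ to $\C_\N$ for a single, conveniently chosen, nested universal ML-test $\MP{U}$, so fix such a $\MP{U}$. For nested $\MP{U}$ and $X \in \MLR$ we have $\{i : X \notin \MP{U}_i\} = \{i : i \geq \rd_{\MP{U}}(X)\}$, whose complement $\{i : X \in \MP{U}_i\} = \{0, 1, \dots, \rd_{\MP{U}}(X)-1\}$ is finite and is enumerable uniformly in $X$ (the event ``$X$ enters $\MP{U}_i$'' is c.e.\ in $X$ and $i$). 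So let the forward functional $\Phi$, on oracle $X$, dovetail the enumerations of the $\MP{U}_i$ and build an $f \in \omega^\omega$ whose range is $\{0\} \cup \{i+1 : X \in \MP{U}_i\}$, padding with $0$'s so that $f$ is total. Then $f \in \dom(\C_\N)$ and $f$ codes the set $\{i : X \notin \MP{U}_i\}$, which is non-empty since $X$ is ML-random; hence every $n \in \C_\N(f)$ satisfies $n \geq \rd_{\MP{U}}(X)$, and by nestedness $X \notin \MP{U}_n$, so $n$ is a correct answer for $\LAY_{\MP{U}}(X)$. We may therefore take $\Psi$ to be the identity, and since $\Psi$ does not consult $X$, this is a strong Weihrauch reduction. (The only degenerate case is $\rd_{\MP{U}}(X) = 0$, where $f$ is the constant function $0$ coding all of $\omega$; then every answer is correct, so nothing breaks.)

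\emph{The non-reduction $\C_\N \nleqW \LAY$.} Suppose toward a contradiction that $\C_\N \leqW \LAY$ via Turing functionals $\Phi$ and $\Psi$. Fix any universal ML-test $\MP{V}$ and the realizer $\Gamma$ of $\LAY = \LAY_{\MP{V}}$ that maps $Y \in \MLR$ to a code for $\rd_{\MP{V}}(Y)$; this $\Gamma$ has domain exactly $\MLR$. For the reduction to succeed on an input $f \in \dom(\C_\N)$, the value $\Psi(\la f, \Gamma(\Phi(f)) \ra)$ must be defined, which forces $\Gamma(\Phi(f))$ to be defined and hence $\Phi(f) \in \dom(\Gamma) = \MLR$. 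Now take $f$ to be the constant function with value $2$, which lies in $\dom(\C_\N)$ since it codes $\omega \setminus \{1\} \neq \emptyset$. As $f$ is computable, $\Phi(f)$ is a computable element of $\omega^\omega$, hence not ML-random, hence not in $\MLR$ --- contradicting $\Phi(f) \in \MLR$.

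\emph{Where the difficulty lies.} Honestly, neither half is hard. In the first half the only points needing care are the appeal to Proposition~\ref{prop-LAYisLAY} in order to work with a nested test (so that a mere upper bound for $\rd_{\MP{U}}(X)$ is literally an element of the coded set) and the $\rd_{\MP{U}}(X)=0$ corner case. In the second half the separation is essentially free: $\dom(\C_\N)$ contains computable points, every Weihrauch reduction must route such a point through $\Phi$ into the domain of a realizer of $\LAY$, and that domain can be taken to be exactly $\MLR$ --- so there is simply nowhere for $\Phi$ to send a computable instance. This is the same domain observation that underlies the earlier remark that no element of $\dom(f)$ is computable when $f \leqsW \LAY$; the only thing to double-check is that it still applies when $\Psi$ is allowed access to the original input, which it does, since the obstruction is located entirely in $\Phi$ and $\Gamma$.
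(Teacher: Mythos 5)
Your proof is correct. The negative direction is essentially identical to the paper's: both arguments locate the obstruction in the fact that $\dom(\C_\N)$ contains computable points, which $\Phi$ would have to map into the domain of a realizer of $\LAY$, and a realizer with domain exactly $\MLR$ exists. The positive direction, however, takes a genuinely different route. You code, uniformly in $X$, the set $\{i : X \notin \MP{U}_i\}$ itself as a $\C_\N$-instance (its complement $\{i : X \in \MP{U}_i\}$ is c.e.\ in $X$), and then $\Psi = \id$ works; note that your appeals to nestedness and to Proposition~\ref{prop-LAYisLAY} are actually superfluous here, since for any universal test the coded set is non-empty and every one of its elements is by definition a correct value of $\LAY_{\MP U}(X)$. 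The paper instead runs a prime-power trick so that $\Phi(X)$ codes the \emph{singleton} $\{p_i^n\}$ with $i = \rd(X)$. That extra work buys two strengthenings that the paper exploits immediately afterwards: it shows $\RD \leqsW \C_\N$ (not merely $\LAY \leqsW \C_\N$), which is needed for the chain $\LAY \lesW \RD \lesW \C_\N$ in the strong degrees, and since the instance is a singleton it shows $\LAY \leqsW \UC_\N$ for the unique choice principle. Your reduction codes the full (non-singleton) answer set, so it proves exactly the stated proposition but neither of these refinements; if you wanted them you would have to modify $\Phi$ to isolate the least element, which is precisely what the paper's stage-by-stage construction does.
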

\begin{proof}
Fix a universal ML-test $\MP U$.  For $\RD \leqsW \C_\N$, let $p_i$ denote the $i$\textsuperscript{th} prime for each $i \in \omega$.  Compute $\Phi$ and $\Psi$ as follows.  In the computation of $\Phi(X)$, `enumerate $n$' means set $\Phi(X)(k) = n+1$, where $k$ is least such that $\Phi(X)(k)$ has not yet been defined.  (The discrepancy between $n$ and $n+1$ here reflects the discrepancy between $n$ and $n+1$ in the definition of $\C_\N$.)

Given $X \in 2^\omega$, $\Phi(X)$ begins by enumerating all numbers that are not $p_0$ while searching for a stage at which $X$ enters $\MP{U}_0$.  When such a stage is found, $\Phi(X)$ finds the least $n$ such that $p_1^n$ is greater than all numbers it has enumerated so far, stops its current enumeration, and begins enumerating all numbers that are not $p_1^n$ while searching for a stage at which $X$ enters $\MP{U}_1$.  When such a stage is found, $\Phi(X)$ continues this pattern by finding the least $n$ such that $p_2^n$ is greater than all numbers it has enumerated so far, stopping its current enumeration, and beginning an enumeration of all numbers that are not $p_2^n$ while searching for a stage at which $X$ enters $\MP{U}_2$, and so on.  $\Psi(n)$ is the least $i$ such that $p_i$ divides $n$.

If $X \in \MLR$, then the $\C_\N$-instance coded by $\Phi(X)$ is a singleton of the form $\{p_i^n\}$, where $i = \rd(X)$.  Hence $\Phi$ and $\Psi$ witness that $\RD \leqsW \C_\N$.

To see that $\C_\N \nleqW \LAY$, observe that there are computable elements $f \in \dom(\C_\N)$, and, for such an $f$, if $\Phi$ if is a Turing functional and $\Phi(f)$ is defined, then $\Phi(f)$ is computable and hence not in $\dom(\Gamma) = \MLR$ for any realizer $\Gamma$ of $\LAY$.
\end{proof}

Thus in the strong Weihrauch degrees we have that $\LAY \lesW \RD \lesW \C_\N$, and in the Weihrauch degrees we have that $\LAY \equivW \RD \leW \C_\N$.  We note that the proof of Proposition~\ref{prop-LAYvsC} in fact shows that $\LAY \leqsW \UC_\N$, where $\UC_\N$ is the \emph{unique} choice principle on $\omega$ as defined by Brattka, de Brecht, and Pauly~\cite{MR2915694}.  

The reason that $\C_\N \leqW \LAY$ fails is that $\C_\N$ has instances that cannot compute ML-random sets. One may consider this an unsatisfactory answer and wonder whether in the presence of randomness the separation between both principles goes away. The first result in this direction is by Brattka, Gherardi and Hölzl~\cite{BrHoeGh13} who show that $\LAY \ast \MLR \equivW \C_\N \ast \MLR$.
In fact, it is sufficient to ``tag'' every input to $\C_\N$ with an additional ML-random set to obtain Weihrauch equivalence, as we can see in the following proposition. Here, $\id_\MLR$ is the identity function on the domain $\MLR$.

\begin{Proposition}\label{prop-LAYisCtimesMLR}
$\LAY \equivW \C_\N \times \id_\MLR$.
\end{Proposition}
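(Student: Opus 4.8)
The plan is to establish the two reductions $\LAY\leqW\C_\N\times\id_\MLR$ and $\C_\N\times\id_\MLR\leqW\LAY$ separately.

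For the first, I would invoke Proposition~\ref{prop-LAYvsC}, which gives $\LAY\leqsW\C_\N$; fix witnessing functionals $\Phi_0,\Psi_0$, so that for every $X\in\MLR$ the sequence $\Phi_0(X)$ codes a legitimate $\C_\N$-instance and $\Psi_0(n)\in\LAY(X)$ whenever $n\in\C_\N(\Phi_0(X))$. Then $X\mapsto\la\Phi_0(X),X\ra$ together with $\la n,X'\ra\mapsto\Psi_0(n)$ witnesses $\LAY\leqsW\C_\N\times\id_\MLR$ (so a fortiori $\LAY\leqW\C_\N\times\id_\MLR$): the $\id_\MLR$-component merely carries $X$ along and is discarded. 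This direction is routine.

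The content is in $\C_\N\times\id_\MLR\leqW\LAY$, and the idea is to encode a solution of a $\C_\N$-instance into the randomness deficiency of a constructed $\MLR$-set. Fix a nested universal ML-test $\MP U$. Given $f\in\dom(\C_\N)$ and $X\in\MLR$, let $E_s$ be the finite set of $n$ with $f(k)=n+1$ for some $k<s$, and put $m_s=\min(\omega\setminus E_s)$; then $(m_s)_{s\in\omega}$ is non-decreasing, changes value only finitely often, and converges to $a_0:=\min\C_\N(f)$. Build $Z=\Phi(\la f,X\ra)$ by outputting the bits of $X$ and, at stage $0$ and at every stage $s$ with $m_s>m_{s-1}$, performing the following: let $\rho$ be the output so far, search for a $\tau$ with $[\rho^\smf\tau]\subseteq\MP U_{\la m_s,s\ra}$, append $\tau$, and resume outputting the bits of $X$ from the beginning. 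Such a $\tau$ exists and can be found effectively because $\MP U_{\la m_s,s\ra}$ is effectively open and, being a component of a universal test, contains the dense set $\overline\MLR$. As there are only finitely many such stages, $Z=\rho^\smf X$ for a finite string $\rho$, so $Z\in\MLR$; moreover, if $s^*$ denotes the last stage at which a string was appended, then $m_{s^*}=a_0$ and that last append forces $[\rho]\subseteq\MP U_{\la a_0,s^*\ra}$, whence, by nestedness, $\rd_{\MP U}(Z)\geq\la a_0,s^*\ra+1$. Let $\Psi$ be the functional that, on input $\la\la f,X\ra,i\ra$, computes $m_i$ from $f$ and outputs $\la m_i,X\ra$.

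To see that $\Phi,\Psi$ witness the reduction, let $i\in\LAY(Z)$. Then $Z\notin\MP U_i$, so $i\geq\rd_{\MP U}(Z)>\la a_0,s^*\ra\geq s^*$ (using $\la a_0,s^*\ra\geq s^*$ for the canonical pairing). Hence $(m_s)_s$ has already reached its final value by stage $i$, so $m_i=a_0\in\C_\N(f)$, and the second coordinate of $\Psi$'s output is exactly $\id_\MLR(X)$; thus $\Psi(\la\la f,X\ra,i\ra)\in(\C_\N\times\id_\MLR)(\la f,X\ra)$. Combined with the first reduction, this yields $\LAY\equivW\C_\N\times\id_\MLR$. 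The step I expect to be the crux is the double encoding: pairing $a_0$ with a bound $s^*$ on the stabilization stage inside the level index of $\MP U$ is precisely what lets $\Psi$ run $f$ long enough to read off $a_0$; encoding $a_0$ alone would only give an \emph{upper bound} for $a_0$ from a value in $\LAY(Z)$, which is insufficient since the set coded by $f$ need not be upward closed. The remaining points---that the $\tau$-searches halt, that $Z$ is total and $\MLR$, and that $\rd_{\MP U}(Z)$ is as claimed---are then straightforward.
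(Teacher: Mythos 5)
Your proof is correct and takes essentially the same approach as the paper's: the forward direction reuses Proposition~\ref{prop-LAYvsC} and passes the input along the $\id_\MLR$ coordinate, and the reverse direction encodes a stage past which the approximation to $\min\C_\N(f)$ has stabilized into the randomness deficiency of $\sigma^\smf X$, then recovers the answer by rerunning $f$ up to the returned level. The only cosmetic difference is that you encode the pair $\la m_s,s\ra$ into the test index where the paper encodes only the stage; the stage alone already suffices (your $\Psi$ in fact only uses the bound $i\geq s^*$, not the first coordinate), so the ``double encoding'' is harmless but not actually the crux.
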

\begin{proof}[Proof sketch]
We see that $\LAY \leqW \C_\N \times \id_\MLR$ by using the reduction from Proposition~\ref{prop-LAYvsC} and by passing along the given $\LAY$ instance as the input to the $\id_\MLR$ side of $\C_\N \times \id_\MLR$.

For $\C_\N \times \id_\MLR \leqW \LAY$, fix a universal ML-test $\MP U$.  For any function $f \colon \omega \imp \omega$ and any $s \in \omega$, let $a_s(f)$ be the least number $n$ such that $(\forall k \leq s)(f(k) \neq n+1)$.  $\Phi(\la f, X \ra)$ copies the bits of $X$ while checking for each $s$ whether $a_s(f) = a_{s+1}(f)$.  At stages $s$ for which $a_s(f) = a_{s+1}(f)$, $\Phi(\la f, X \ra)$ appends a string to its output to ensure that $\rd(\Phi(\la f, X \ra)) \geq s+1$.  Then $\Phi(\la f, X \ra)$ restarts outputting the bits of $X$ from the beginning.  $\Psi(\la \la f, X \ra, s \ra)$ is $\la a_s(f), X \ra$.  If $\la f, X \ra$ is a $\C_\N \times \id_\MLR$-instance, then $\Phi(\la f, X \ra)$ is a ML-random sequence of the form $\sigma^\smf X$ where, for any $s \geq \rd(\sigma^\smf X)$, $a_s(f)$ is the least $n$ such that $n+1 \notin \ran(f)$.  Thus if $s \geq \rd(\sigma^\smf X)$, then $\Psi(\la \la f, X \ra, s \ra) = \la a_s(f), X \ra \in (\C_\N \times \id_\MLR)(\la f, X \ra)$.
\end{proof}

We thank Davie, Fouch\'e, and Pauly~\cite{PaulyPC} for pointing us to the correct statement of Proposition~\ref{prop-LAYisCtimesMLR}.  They also independently obtained Proposition~\ref{prop-LAYisCtimesMLR}, and their statement of it is much less cumbersome than our original statement of it was.
%~\cite{PaulyPC}.

The next pair of results identify the complexity of sets whose characteristic functions can be reduced to $\LAY$ when restricted to $\MLR$.  The characteristic function of any $\Delta^0_2$ set restricted to $\MLR$ is $\leqW \LAY$, but there is a $\Sigma^0_2$ set whose characteristic function restricted to $\MLR$ is not $\leqW \LAY$.  This contrasts with the complexity of sets whose characteristic functions are layerwise computable, as we see in Section~\ref{sec-LWCvsLAY}.

\begin{Theorem}\label{thm-Delta02LAY}
If $\MP A \subseteq 2^\omega$ is $\Delta^0_2$, then $\chi_{\MP A} \restriction \MLR \leqW \LAY$.
\end{Theorem}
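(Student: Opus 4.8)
The plan is to reduce $\chi_{\MP A} \restriction \MLR$ to $\C_\N \times \id_\MLR$; this suffices because $\C_\N \times \id_\MLR \equivW \LAY$ by Proposition~\ref{prop-LAYisCtimesMLR}. First I would unpack the hypothesis. Since $\MP A$ is $\Delta^0_2$, it is both $\Sigma^0_2$ and $\Pi^0_2$, so there are uniformly effectively open sequences $(O_n)_{n \in \omega}$ and $(O'_n)_{n \in \omega}$ with $\MP A = \bigcap_n O_n$ and $2^\omega \setminus \MP A = \bigcap_n O'_n$; replacing $O_n$ by $\bigcap_{i \leq n} O_i$ and likewise for $O'_n$, we may assume both sequences are descending. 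Put $\MP B_n = O_n \cap O'_n$. Then $(\MP B_n)_{n \in \omega}$ is a descending, uniformly effectively open sequence, and $\bigcap_n \MP B_n = \MP A \cap (2^\omega \setminus \MP A) = \emptyset$.

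The key observation is that for \emph{any} $X \in 2^\omega$ and any $n$ with $X \notin \MP B_n$, membership of $X$ in $\MP A$ can be decided computably from $X$ and $n$. Indeed, $\MP A \subseteq O_n$ and $2^\omega \setminus \MP A \subseteq O'_n$, so: if $X \in \MP A$ then $X \in O_n$, and $X \notin O'_n$ since $X \notin O_n \cap O'_n$; symmetrically, if $X \notin \MP A$ then $X \in O'_n \setminus O_n$. Hence exactly one of the events $X \in O_n$, $X \in O'_n$ occurs, and it tells us whether $X \in \MP A$. As $O_n$ and $O'_n$ are effectively open uniformly in $n$, these two events are semidecidable from $X$ and $n$, so dovetailing their semidecision procedures terminates and outputs the correct value of $\chi_{\MP A}(X)$.

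Next I would set up the reduction. The forward functional $\Phi$ sends $X \in \MLR$ to the pair $\la f_X, X \ra$, where $f_X$ is computed from $X$ by a fixed Turing functional so that $f_X$ enumerates $\{n+1 : X \in \MP B_n\}$. This is a legitimate $\C_\N$-instance: since $\bigcap_n \MP B_n = \emptyset$, the set $\{n : X \notin \MP B_n\}$ coded by $f_X$ is nonempty; and $X \in \MLR$ is a legitimate input to $\id_\MLR$. Given any realizer of $\C_\N \times \id_\MLR$, its value on $\la f_X, X \ra$ is a pair $\la n, X \ra$ with $X \notin \MP B_n$. The backward functional $\Psi$, which has access to the original input $X$, then runs the dovetailing procedure from the previous paragraph on $X$ and $n$ and outputs $\chi_{\MP A}(X)$. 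This establishes $\chi_{\MP A} \restriction \MLR \leqW \C_\N \times \id_\MLR \equivW \LAY$.

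I expect the only real obstacle to be the temptation to argue more directly: because $\bigcap_n \MP B_n = \emptyset$ forces $\lambda(\MP B_n) \to 0$, one might hope to thin out $(\MP B_n)_{n \in \omega}$ into a genuine Martin-L\"of test and recover a level $n$ with $X \notin \MP B_n$ from a $\LAY$-style bound on the randomness deficiency of $X$. But $\lambda(\MP B_n) \to 0$ is not effective, so this does not work. Passing through $\C_\N$ sidesteps the issue, since $\C_\N$ can ``guess'' such an $n$ from the set $\{n : X \in \MP B_n\}$, which is c.e.\ in $X$; and tagging that $\C_\N$-instance with the random set $X$ is precisely what keeps the whole reduction within the strength of $\LAY$ via Proposition~\ref{prop-LAYisCtimesMLR}.
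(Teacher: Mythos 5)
Your proof is correct, but it takes a different route from the paper's main argument. The paper proves the theorem directly: writing $\MP A = \bigcup_i [T_i]$ and $2^\omega \setminus \MP A = \bigcup_i [S_i]$ for uniformly computable trees, the forward functional copies $X$ while forcing its output ever deeper into a nested universal test each time it sees $X \restriction n \notin T_k \cup S_k$, so that any $i \geq \rd(\Phi(X))$ bounds the least level $k$ at which the two approximations separate; the backward functional then decodes membership from such an $i$. You instead factor the reduction as $\chi_{\MP A} \restriction \MLR \leqW \C_\N \times \id_\MLR \equivW \LAY$, packaging the $\Delta^0_2$ data as the $\C_\N$-instance $\{n+1 : X \in O_n \cap O'_n\}$ (where $\MP A = \bigcap_n O_n$ and $2^\omega\setminus\MP A = \bigcap_n O'_n$) and correctly observing that from any $n$ with $X \notin O_n \cap O'_n$ one can decide $\chi_{\MP A}(X)$ by dovetailing two semidecision procedures, exactly one of which halts. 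All the deficiency-inflating machinery is thereby delegated to Proposition~\ref{prop-LAYisCtimesMLR}, which precedes the theorem in the paper, so there is no circularity. In fact the paper records essentially your argument in the remark immediately following the theorem, phrased with trees and with the intermediate step $\chi_{\MP A} \leqW \C_\N$ (there the instance enumerates $\{n+1 : (\forall i \leq n)(g \notin [T_i] \cup [S_i])\}$, the tree-side dual of your open-set formulation); that version has the small added benefit of isolating the test-free fact $\chi_{\MP A} \leqW \C_\N$, which holds on all of Baire space. Your modular route is cleaner; the paper's direct construction is self-contained and illustrates the technique reused throughout Section~\ref{sec-Weihrauch}. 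Your closing caveat is also sound: $\lambda(O_n \cap O'_n) \to 0$ holds by continuity of measure but not effectively, so the sequence $(O_n \cap O'_n)_{n\in\omega}$ cannot in general be thinned into an ML-test, and passing through $\C_\N$ is the right way around this.
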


\begin{proof}
Let $\MP A \subseteq 2^\omega$ be $\Delta^0_2$, and let $(T_i)_{i \in \omega}$ and $(S_i)_{i \in \omega}$ be two uniformly computable sequences of subtrees of $2^{<\omega}$ such that $\MP A = \bigcup_{i \in \omega}[T_i]$ and $2^\omega \setminus \MP A = \bigcup_{i \in \omega}[S_i]$.

Fix a nested universal ML-test $\MP U$.  Given $X \in 2^\omega$, to compute $\Phi(X)$, first search for a string $\tau$ such that $[\tau] \subseteq \MP{U}_0$, and output $\tau$ as the initial bits of $\Phi(X)$.  Now start outputting the bits of $X$ while searching for an $n$ such that $X \restriction n \notin T_0 \cup S_0$.  If such an $n$ is found, let $\sigma$ be the output of $\Phi(X)$ so far, and search for a string $\tau$ such that $[\sigma^\smf\tau] \subseteq \MP{U}_1$.  Append $\tau$ to the current output of $\Phi(X)$ (so that it becomes $\sigma^\smf\tau$), and restart outputting the bits of $X$ from the beginning.  Now search for an $n$ such that $X \restriction n \notin T_1 \cup S_1$.  Again, if such an $n$ is found, let $\sigma$ be the output of $\Phi(X)$ so far, and search for a string $\tau$ such that $[\sigma^\smf\tau] \subseteq \MP{U}_2$.  Append $\tau$ to the current output of $\Phi(X)$ (so that it becomes $\sigma^\smf\tau$), and restart outputting the bits of $X$ from the beginning.  Continue in this way, now searching for an $n$ such that $X \restriction n \notin T_2 \cup S_2$, and so on.  To compute $\Psi(\la X, i \ra)$, search for the least $n$ such that either $(\forall j \leq i)(X \restriction n \notin T_j)$ or $(\forall j \leq i)(X \restriction n \notin S_j)$.  If $(\forall j \leq i)(X \restriction n \notin T_j)$, then output $0$; and otherwise output $1$.

We show that $\Phi$ and $\Psi$ witness that $\chi_{\MP A} \restriction \MLR \leqW \LAY$.  Let $X \in \MLR$.  Note that $X$ is in exactly one of $\bigcup_{i \in \omega}[T_i]$ and $\bigcup_{i \in \omega}[S_i]$.  So let $k$ be least such that $X \in [T_k] \triangle [S_k]$.  Then $\Phi(X) = \sigma^\smf X$ for a $\sigma$ with $[\sigma] \subseteq \MP{U}_k$.  Thus $\Phi(X)$ is ML-random with $\rd(\Phi(X)) \geq k$.  Now let $i \geq \rd(\Phi(X))$ and consider $\Psi(\la X, i \ra)$.  If $X \in \MP A$, then $X \in \bigcup_{j \leq i}[T_j]$ but $X \notin \bigcup_{j \leq i}[S_j]$ (because $i \geq \rd(\Phi(X)) \geq k$).  In this case, $\Psi(\la X, i \ra)$ only finds an $n$ such that $(\forall j \leq i)(X \restriction n \notin S_j)$ and correctly outputs $1$.  Similarly, if $X \notin \MP A$, then $X \in \bigcup_{j \leq i}[S_j]$ but $X \notin \bigcup_{j \leq i}[T_j]$.  In this case, $\Psi(\la X, i \ra)$ only finds an $n$ such that $(\forall j \leq i)(X \restriction n \notin T_j)$ and correctly outputs $0$.
\end{proof}

\begin{Remark}
Theorem~\ref{thm-Delta02LAY} can also be seen indirectly.  First, if $\MP A \subseteq 2^\omega$  is $\Delta^0_2$, then $\chi_{\MP A} \leqW \C_\N$.  This can be seen by a proof similar to that of Theorem~\ref{thm-Delta02LAY}:  let $(T_i)_{i \in \omega}$ and $(S_i)_{i \in \omega}$ be two uniformly computable sequences of subtrees of $2^{<\omega}$ such that $\MP A = \bigcup_{i \in \omega}[T_i]$ and $2^\omega \setminus \MP A = \bigcup_{i \in \omega}[S_i]$.  Given $g \in 2^\omega$, $\Phi(g)$ enumerates the set $\{n+1 : (\forall i \leq n)(g \notin [T_i] \cup [S_i])\}$.  Then, given $n$ such that $n+1 \notin \ran( \Phi(g))$, $\Psi(\la g, n \ra)$ decodes whether or not $g$ is in $\MP A$ as it does in the proof of Theorem~\ref{thm-Delta02LAY}. Notice that this argument also works in Baire space.

 Second, if $\MP A \subseteq 2^\omega$ and $\chi_{\MP A} \leqW \C_\N$, then $\chi_{\MP A}\restriction \MLR \leqW \LAY$.  To see this, observe that $\LAY \equivW \C_\N \times \id_\MLR$ by Proposition~\ref{prop-LAYisCtimesMLR} and that $\chi_{\MP A}\restriction \MLR \leqW \C_\N \times \id_\MLR$ by using the assumed reduction $\chi_{\MP A} \leqW \C_\N$ and the fact that inputs to $\chi_{\MP A}\restriction \MLR$ must be in $\MLR$.  Thus if $\MP A \subseteq 2^\omega$ is $\Delta^0_2$, then $\chi_{\MP A} \leqW \C_\N$, so $\chi_{\MP A}\restriction \MLR \leqW \LAY$. 
\end{Remark}

\begin{Theorem}\label{thm-Sigma2NotLAY}
There is a $\Sigma^0_2$ set $\MP A \subseteq 2^\omega$ such that $\chi_{\MP A} \restriction \MLR \nleqW \LAY$.
\end{Theorem}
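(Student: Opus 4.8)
The plan is to pin down exactly how much information a reduction to $\LAY$ can extract and then to diagonalise against that. By Proposition~\ref{prop-LAYisLAY} we may work with a nested universal ML-test $\MP U$, so it suffices to show $\chi_{\MP A}\restriction\MLR\nleqW\LAY_{\MP U}$. Suppose $\Phi,\Psi$ witness $\chi_{\MP A}\restriction\MLR\leqW\LAY_{\MP U}$. The first step is to observe that then, for every $X\in\MLR$,
\begin{align*}
X\notin\MP A\iff\exists i\,(\Phi(X)\notin\MP{U}_i\ \andd\ \Psi(\la X,i\ra)\da=0).
\end{align*}
Indeed, if $X\notin\MP A$ take $i=\rd_{\MP U}(\Phi(X))$; and if $X\in\MP A$ then for $i\geq\rd_{\MP U}(\Phi(X))$ the second conjunct fails since $\Psi(\la X,i\ra)=1$, while for $i<\rd_{\MP U}(\Phi(X))$ the first conjunct fails by nestedness of $\MP U$. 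Since $X\mapsto\Phi(X)$ is computable, ``$\Phi(X)\notin\MP{U}_i$'' is uniformly $\Pi^0_1$ in $X$ and ``$\Psi(\la X,i\ra)\da=0$'' is uniformly $\Sigma^0_1$ in $X$, so the displayed right-hand side is a single $\Sigma^0_2$ condition on $X$. Equivalently, there is a $\Sigma^0_2$ class $\MP W=\bigcup_i\MP{C}_i\subseteq 2^\omega$ with the $\MP{C}_i$ uniformly effectively closed and $\overline{\MP A}\cap\MLR=\MP W\cap\MLR$. (One can also read this off from $\LAY\equivW\C_\N\times\id_\MLR$, Proposition~\ref{prop-LAYisCtimesMLR}.)

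It therefore suffices to produce a $\Sigma^0_2$ set $\MP A$ whose ``outside'' cannot be captured on $\MLR$ in this way. The set I would use is
\begin{align*}
\MP A=\bigcup\{[1^k0]:\Phi_k\text{ is not total}\},
\end{align*}
which is $\Sigma^0_2$ because ``$X\in\MP A$'' unfolds as $\exists\la k,n\ra\,\forall s\,(1^k0\prec X\ \andd\ \Phi_{k,s}(n)\ua)$, a union of uniformly $\Pi^0_1$ classes; moreover it is properly $\Sigma^0_2$, since evaluating a hypothetical $\Pi^0_2$ description of $\MP A$ at the computable point $1^k0^\smf0^\omega$ would make $\{k:\Phi_k\text{ total}\}$ a $\Delta^0_2$ set. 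The point of this choice is that for $X\in[1^k0]\cap\MLR$ we have $X\notin\MP A$ if and only if $\Phi_k$ is total, so the ``outside'' of $\MP A$ on $\MLR$ is organised by a genuinely $\Pi^0_2$-complete set.

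Now assume $\overline{\MP A}\cap\MLR=\MP W\cap\MLR$ as above. Each cone $[1^k0]$ has positive measure, hence meets $\MLR$, and by the previous step $[1^k0]\cap\MLR$ lies entirely inside $\MP W$ when $\Phi_k$ is total and entirely outside $\MP W$ otherwise; consequently $\Phi_k$ is total iff $[1^k0]\cap\MP W\cap\MLR\neq\estr$ iff $\exists i\,([1^k0]\cap\MP{C}_i)\cap\MLR\neq\estr$. The crucial move is then: a nonempty effectively closed subset of $2^\omega$ meets $\MLR$ exactly when it has positive measure --- if $\lambda([T])=0$ then the uniformly computable level sets of $T$ form a ML-test capturing $[T]$ --- so ``$([1^k0]\cap\MP{C}_i)\cap\MLR\neq\estr$'' is equivalent to ``$\lambda([1^k0]\cap\MP{C}_i)>0$''. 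The measure of a $\Pi^0_1$ class is uniformly right-c.e., and ``a right-c.e.\ real is positive'' is a $\Sigma^0_2$ condition, so ``$\Phi_k$ is total'' becomes a $\Sigma^0_2$ condition on $k$, contradicting the $\Pi^0_2$-completeness of $\{k:\Phi_k\text{ total}\}$. Hence $\chi_{\MP A}\restriction\MLR\nleqW\LAY$.

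The delicate step --- the heart of the proof --- is this last one. It is tempting to try to transfer the complexity of $\{k:\Phi_k\text{ total}\}$ to $\overline{\MP A}\cap\MLR$ directly, but a $\Sigma^0_2$ formula that is correct on $\MLR$ may behave arbitrarily off $\MLR$, and a single ML-random $R$ can be as powerful as $\emptyset''$, so naively substituting computable points $1^k0^\smf R$ does not work. What does work is to bypass the syntax and use the measure-theoretic characterisation of which effectively closed pieces of $\MP W$ meet $\MLR$; that criterion is robust and is exactly $\Sigma^0_2$, which is precisely one level too low for totality. That $\Sigma^0_2$ is the correct threshold rather than an artefact of a lossy argument is confirmed by Theorem~\ref{thm-Delta02LAY}, which reduces every Borel $\Delta^0_2$ set to $\LAY$.
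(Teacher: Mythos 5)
Your proof is correct, but it takes a genuinely different route from the paper's. The paper diagonalises directly: it defines $\MP A$ self-referentially over all pairs of functionals $(\Phi_i,\Psi_j)$, using the disjoint cones $[0^{\la i,j\ra}1]$ to separate requirements and a computable stage-wise approximation $k(\sigma,s)$ to the randomness deficiency of the partial output of $\Phi_i$; the condition ``$\exists s_0\forall s>s_0(\Psi_j(\cdots)\neq 1)$'' makes $\MP A$ a $\Sigma^0_2$ set, and whichever value $\Psi_j$ returns at the true deficiency, the definition places the input on the opposite side. You instead extract a structural necessary condition for reducibility to $\LAY$: the trace of $\overline{\MP A}$ on $\MLR$ must agree on $\MLR$ with a union $\bigcup_i \MP C_i$ of uniformly $\Pi^0_1$ classes, and which pieces $[1^k0]\cap\MP C_i$ actually meet $\MLR$ is decided by the positivity of a uniformly right-c.e.\ real --- a $\Sigma^0_2$ condition --- so encoding $\mathrm{Tot}$ into the cones yields a contradiction with its $\Pi^0_2$-completeness. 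Your approach buys an explicit, natural witness (the set coding totality) and a reusable obstruction principle (anything $\leqW\LAY$ on $\MLR$ has both its trace and its co-trace of this measure-detectable $\Sigma^0_2$ form, which dovetails nicely with Theorem~\ref{thm-Delta02LAY}); the paper's buys brevity and self-containedness, needing nothing beyond the enumeration of functionals. Two cosmetic points to tidy: the sets $\{X : \Phi(X)\notin\MP U_i \andd \Psi(\la X,i\ra)\da=0\}$ as literally written are differences of effectively open sets rather than closed, so one should split the $\Sigma^0_1$ conjunct into its clopen stages to get the uniformly $\Pi^0_1$ decomposition you assert (every $\Sigma^0_2$ class admits one, so nothing is lost); and ``$\Phi(X)\notin\MP U_i$'' should be read as the $\Pi^0_1$ condition ``no finite output of $\Phi$ on $X$ lands inside $\MP U_i$,'' which coincides with the intended meaning on $\MLR$, where $\Phi(X)$ is total.
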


\begin{proof}
For the purposes of this proof, let $(\Psi_j)_{j \in \omega}$ be a duplicate effective listing of all Turing functionals.  Fix a universal ML-test $\MP U$.  Define a computable function $k \colon \omega \times 2^{<\omega} \rightarrow \omega$ by letting $k(\sigma,s)$ be the least $i$ such that $[\sigma] \nsubseteq \MP{U}_{i,s}$.  For each $n \in \omega$, let $\tau_n$ be the string $0^n1$.  Let $\MP A$ be the $\Sigma^0_2$ set
\begin{align*}
\MP A = \{{\tau_{\la i,j \ra}}^\smf X : (\exists s_0)(\forall s > s_0)(\Psi_j(\la {\tau_{\la i,j \ra}}^\smf X, k(\Phi_{i,s}({\tau_{\la i,j \ra}}^\smf X),s) \ra) \neq 1)\}.
\end{align*}
We show that $\chi_{\MP A} \restriction \MLR \nleqW \LAY$.  Suppose for a contradiction that $\Phi_i$ and $\Psi_j$ witness that $\chi_{\MP A} \restriction \MLR \leqW \LAY$.  Let $X \in \MLR$.  Then ${\tau_{\la i,j \ra}}^\smf X \in \MLR$, so $\Phi_i({\tau_{\la i,j \ra}}^\smf X) \in \MLR$ as well.  Let $d = \rd(\Phi_i({\tau_{\la i,j \ra}}^\smf X))$, and let $s_0$ be large enough to witness this.  That is, let $s_0$ be such that $(\forall \ell < d)([\Phi_{i,{s_0}}({\tau_{\la i,j \ra}}^\smf X)] \subseteq \MP{U}_{\ell,s_0})$, and observe that $(\forall s > s_0)(k(\Phi_{i,s}({\tau_{\la i,j \ra}}^\smf X),s) = d)$.  Suppose that $\Psi_j(\la {\tau_{\la i,j \ra}}^\smf X, d \ra) = 0$.  Then, for all $s > s_0$, $\Psi_j(\la {\tau_{\la i,j \ra}}^\smf X, k(\Phi_{i,s}({\tau_{\la i,j \ra}}^\smf X),s)\ra) = \Psi_j(\la {\tau_{\la i,j \ra}}^\smf X, d \ra) = 0 \neq 1$.  So, by definition of $\MP A$, ${\tau_{\la i,j \ra}}^\smf X \in \MP A$, contradicting that $\Phi_i$ and $\Psi_j$ witness that $\chi_{\MP A} \restriction \MLR \leqW \LAY$.  Now suppose that $\Psi_j(\la {\tau_{\la i,j \ra}}^\smf X, d \ra) = 1$.  Then, for all $s > s_0$, $\Psi_j(\la {\tau_{\la i,j \ra}}^\smf X, k(\Phi_{i,s}({\tau_{\la i,j \ra}}^\smf X),s) \ra) = \Psi_j(\la {\tau_{\la i,j \ra}}^\smf X, d \ra) = 1$.  This implies that ${\tau_{\la i,j \ra}}^\smf X \notin \MP A$, again contradicting that $\Phi_i$ and $\Psi_j$ witness that $\chi_{\MP A} \restriction \MLR \leqW \LAY$.
\end{proof}

Of course the complement of the set $\MP A$ from Theorem~\ref{thm-Sigma2NotLAY} is an example of $\Pi^0_2$ set whose characteristic function restricted to $\MLR$ is not $\leqW \LAY$.

\section{Weihrauch-below $\LAY$ versus layerwise computability}\label{sec-LWCvsLAY}

It is not difficult to see that the restriction to $\MLR$ of a layerwise computable function is Weihrauch-below $\LAY$.  With a slightly more involved argument we can also show that if $\MP A$ is any layerwise semi-decidable set, then the characteristic function of $\MP A$ restricted to $\MLR$ is $\leqW \LAY$.

\begin{Theorem}\label{thm-LWSDareLAY}
Let $\MP W$ be a universal ML-test, and let $\MP A \subseteq 2^\omega$ be $\MP W$-layerwise semi-decidable.  Then $\chi_{\MP A}\restriction\MLR \leqW \LAY$.
\end{Theorem}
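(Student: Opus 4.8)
The idea is to run a standard "wait for a witness while padding the randomness deficiency" construction, exactly in the style of the proofs in Section~\ref{sec-Weihrauch}. Since $\MP A$ is $\MP W$-layerwise semi-decidable, fix a sequence of uniformly effectively open sets $(\MP G_i)_{i \in \omega}$ with $\MP A \cap (2^\omega \setminus \MP W_i) = \MP G_i \cap (2^\omega \setminus \MP W_i)$ for all $i$. The reduction will use a \emph{nested} universal ML-test $\MP U$ (legitimate by Proposition~\ref{prop-LAYisLAY}, since $\LAY_{\MP W} \equivsW \LAY_{\MP U}$; or one can just work with $\LAY_{\MP W}$ directly and pad inside $\MP W$). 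On input $X \in \MLR$, $\Phi(X)$ copies the bits of $X$ and, for increasing $i$, waits for a stage at which $X$ is observed to enter $\MP W_i$ (equivalently, waits until a basic clopen set around the current output is seen to be contained in $\MP W_i$); when this happens, it extends its current output $\sigma$ to some $\sigma^\smf\tau$ with $[\sigma^\smf\tau] \subseteq \MP U_{i+1}$ (such $\tau$ exists because, $\MP U$ being universal, every cylinder contains non-ML-random points) and restarts copying $X$ from the beginning. If $X \in \MLR$ then $X \notin \MP W_i$ for all $i \geq \rd_{\MP W}(X)$, so only finitely many restarts occur, the final output is $\sigma^\smf X$ with $\sigma^\smf X \in \MLR$, and by construction $\rd_{\MP U}(\sigma^\smf X) > \rd_{\MP W}(X)$; in particular $\Phi(X) \in \dom(\LAY)$.

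The post-processor $\Psi$ takes $\la X, i \ra$ with $i \in \LAY(\Phi(X))$, so $i \geq \rd_{\MP U}(\Phi(X)) > \rd_{\MP W}(X)$, hence $X \notin \MP W_i$. It then searches for a stage $s$ at which either $X$ is observed to enter $\MP W_i$ (which will never happen, since $X \notin \MP W_i$) or $X$ is observed to enter $\MP G_i$ — i.e.\ $\Psi$ dovetails, looking for a basic clopen set around an initial segment of $X$ witnessing membership in $\MP W_i$, respectively in $\MP G_i$. If such a stage is found witnessing $\MP G_i$, output $1$; since $X \notin \MP W_i$, the agreement $\MP A \cap (2^\omega \setminus \MP W_i) = \MP G_i \cap (2^\omega \setminus \MP W_i)$ gives $X \in \MP A$. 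If on the other hand $X \in \MP A$, then again because $X \notin \MP W_i$ we have $X \in \MP G_i$, so the search succeeds and $\Psi$ correctly outputs $1$. If $X \notin \MP A$, then $X \notin \MP G_i$ (same agreement), so the search never succeeds — but this is a problem, because then $\Psi$ never halts, and we need $\chi_{\MP A}(X) = 0$ to be produced.

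The fix is to exploit that $\Psi$ has access to the \emph{whole} input $X$ (this is a plain, not strong, Weihrauch reduction): $\Psi$ should actually dovetail three searches on $X$: the witness for $X \in \MP W_i$, the witness for $X \in \MP G_i$, and a witness for $X \in \MP W_i'$ where $\MP W' = (\MP W_i')$ is a \emph{second} universal ML-test chosen so that $\MP A$ is also decided correctly on its complement — concretely, one arranges that both $\MP A$ and a suitable effectively open over-approximation behave well, or, more simply, one makes $\Phi$ pad to a randomness deficiency exceeding \emph{both} $\rd_{\MP W}(X)$ and $\rd_{\MP {W}'}(X)$ for a second universal test $\MP{W}'$ with the property that $2^\omega \setminus \MP A$ agrees with some effectively open $\MP G_i'$ off $\MP W_i'$. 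If no such $\MP{W}'$ is available directly, the cleanest route is: let $\MP G_i$ over-approximate $\MP A$ off $\MP W_i$ as above and additionally build, uniformly, an effectively open set $\MP H_i \supseteq (2^\omega \setminus \MP A)$ with $\lambda(\MP G_i \cap \MP H_i)$ small, then pad $\Phi(X)$ so that $i \in \LAY(\Phi(X))$ forces $i$ large enough that $X \notin \MP G_i \cap \MP H_i$; then $\Psi(\la X, i\ra)$ dovetails the searches "$X \in \MP G_i$?" and "$X \in \MP H_i$?", exactly one of which succeeds, and outputs $1$ or $0$ accordingly. Verifying that this padding is achievable — i.e.\ that one can simultaneously force $i$ past $\rd_{\MP W}(X)$ and past the index where $X$ leaves $\MP G_i \cap \MP H_i$ — is the main technical point, but it is the same mechanism as in Theorem~\ref{thm-Delta02LAY}, where $T_j \cup S_j$ plays the role of the region where the decision is not yet forced. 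I expect the bookkeeping of the restarts (ensuring each restart is triggered only finitely often and that the appended $\tau$'s stay inside the correct component of $\MP U$) to be the most error-prone step, while the correctness of the final output follows immediately from the defining agreement of layerwise semi-decidability.
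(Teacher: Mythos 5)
Your pre-processing step is fine and matches the first half of the paper's argument: padding the output's deficiency past $\rd_{\MP W}(X)$ guarantees that the returned bound $i$ satisfies $X \notin \MP W_i$, so membership of $X$ in $\MP A$ becomes equivalent to the $\Sigma^0_1$ event $X \in \MP G_i$. You also correctly spot the real difficulty: when $X \notin \MP A$ the search for a witness of $X \in \MP G_i$ diverges and $\Psi$ cannot output $0$. But your proposed fix does not work. An effectively open $\MP H_i \supseteq 2^\omega \setminus \MP A$ with $\lambda(\MP G_i \cap \MP H_i)$ small, uniformly in $i$, is exactly a witness that $\MP A$ is effectively $\lambda$-measurable, hence (by Theorem~\ref{thm-H&R}) layerwise \emph{decidable}; by the Hoyrup--Rojas criterion quoted in Remark~\ref{rmk-ClosedNotLWSD}, a layerwise semi-decidable set is layerwise decidable iff its measure is computable, and the complement of a positive-measure $\Pi^0_1$ class of ML-randoms is a layerwise semi-decidable set for which no such $\MP H_i$ exists. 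The analogy with Theorem~\ref{thm-Delta02LAY} is misleading for the same reason: there both $\MP A$ and its complement come with uniformly computable closed approximations, i.e.\ the description is two-sided, whereas layerwise semi-decidability is intrinsically one-sided.

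The missing idea is to make a \emph{second} call to the oracle rather than to symmetrize the set. The paper factors $\chi_{\MP A}\restriction\MLR = f \circ g$ with $g(X) = \la X, \rd_{\MP W}(X)\ra$ and $f(\la X,n\ra) = \chi_{\MP{G}_n}(X)$, and reduces each factor to $\RD$, concluding via $\RD \ast \RD \equivW \LAY \ast \LAY \leqW \LAY$ (Theorems~\ref{thm-LAYisRD} and~\ref{thm-LAYstarLAYisLAY}). For $f \leqW \RD$ the pre-processor copies $X$ while waiting for a stage $s$ with $X \in \MP{G}_{n,s}$; if one appears it pads the output into $\bigcap_{j<s}\MP W_j$ and continues with $X$, and if none appears the output is just $X$. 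The post-processor, given the bound $m$, evaluates the \emph{stage-$m$ approximation} ``$X \in \MP{G}_{n,m}$?'', which is a terminating computation: if $X \in \MP G_n$ the padding forces $m$ past the entry stage so the answer is $1$, and if $X \notin \MP G_n$ the answer is $0$ for every $m$. In other words, the divergent search is replaced by a bounded search whose bound is smuggled out through the randomness deficiency of a second random sequence; this is the device you need in place of the nonexistent $\MP H_i$.
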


\begin{proof}
Let $(\MP{U}_i)_{i \in \omega}$ be a sequence of uniformly effectively open sets that witnesses the $\MP W$-layerwise semi-decidability of $\MP A$.  We show that $\chi_{\MP A}\restriction\MLR \leqW \RD \ast \RD$, which suffices by Theorem~\ref{thm-LAYisRD} and Theorem~\ref{thm-LAYstarLAYisLAY}.

Let $g \colon \MLR \imp \MLR \times \omega$ be the function $X \mapsto \la X, \rd_{\MP W}(X) \ra$, and let $f \colon \MLR \times \omega \imp 2$ be the function
\begin{align*}
f(\la X, n \ra) = 
\begin{cases}
0 & \text{if $X \notin \MP{U}_n$}\\
1 & \text{if $X \in \MP{U}_n$}.
\end{cases}
\end{align*}
For $X \in \MLR$, we have that $f(g(X)) = f(\la X, \rd_{\MP W}(X) \ra) = 1$ if and only if $X \in \MP{U}_{\rd_{\MP W}(X)}$ if and only if $X \in \MP{U}_{\rd_{\MP W}(X)} \cap (2^\omega \setminus \MP{W}_{\rd_{\MP W}(X)})$ if and only if $X \in \MP A$.  Thus $f \circ g = \chi_{\MP A}\restriction\MLR$.  Clearly $g \leqW \RD$.  It remains to show that $f \leqW \RD$.

On input $\la X, n \ra$, $\Phi(\la X, n \ra)$ outputs the bits of $X$ while searching for a stage $s$ with $X \in \MP{U}_{n,s}$.  When such an $s$ is found, let $\sigma$ be the output produced so far, search for a string $\tau$ such that $(\forall i < s)([\sigma^\smf \tau] \subseteq \MP{W}_i)$, and then output $\sigma^\smf \tau^\smf X$.  If no such $s$ is ever found, then $\Phi(\la X, n \ra) = X$.  Let $\Psi$ be the functional
\begin{align*}
\Psi(\la \la X, n \ra, m \ra) =
\begin{cases}
0 & \text{if $X \notin \MP{U}_{n,m}$}\\
1 & \text{if $X \in \MP{U}_{n,m}$}.
\end{cases}
\end{align*}

Consider $\la X, n \ra$ with $X \in \MLR$ and $n \in \omega$.  If $X \in \MP{U}_n$, then this is witnessed at some stage $s$, and $\Phi(\la X, n \ra)$ is ML-random with $\rd_{\MP W}(\Phi(\la X, n \ra)) \geq s$.  So, in this case, if $m = \rd_{\MP W}(\Phi(\la X, n \ra))$, then $\Psi(\la \la X, n \ra, m \ra) = 1$.  On the other hand, if $X \notin \MP{U}_n$, then $\Psi(\la \la X, n \ra, m \ra) = 0$ for every $m$.  Thus $\Phi$ and $\Psi$ witness that $f \leqW \RD$.
\end{proof}

\begin{Remark}\label{rmk-ClosedNotLWSD}
If $\MP A$ is effectively open, then $\MP A$ is layerwise semi-decidable and hence 
%the characteristic function of $\MP A$ restricted to $\MLR$ is $\leqW \LAY$ 
$\chi_{\MP A}\restriction\MLR \leqW \LAY$
by either Theorem~\ref{thm-Delta02LAY} or Theorem~\ref{thm-LWSDareLAY}.  Of course, there are examples of effectively open sets that are not layerwise decidable.  In fact, Hoyrup and Rojas (\cite{MR2545900}~Proposition~6) prove that a layerwise semi-decidable set $\MP A \subseteq 2^\omega$ is layerwise decidable if and only if $\lambda(\MP A)$ is computable.  Consider then a non-empty $\Pi^0_1$ class $\MP T \subseteq \MLR$.  By \cite[Theorem~3.2.35]{Nies:book}, $\lambda(\MP T)$ is not computable.  Thus the effectively open set $\MP A = 2^\omega \setminus \MP T$ is not layerwise decidable because its measure is $1-\lambda(\MP T)$, which is not computable.  $\MP T$ itself is an example of an effectively closed set that is not layerwise semi-decidable.
\end{Remark}

\begin{Corollary}\label{cor-not_characterize}
The class of layerwise computable functions is strictly smaller than the class of functions whose restrictions to $\MLR$ are Weihrauch-reducible to $\LAY$.  The class of layerwise semi-decidable sets is strictly smaller than the class of sets whose characteristic functions restricted to $\MLR$ are Weihrauch-reducible to $\LAY$.  Hence Weihrauch reducibility to $\LAY$ does not characterize layerwise computability or even layerwise semi-decidability.
\end{Corollary}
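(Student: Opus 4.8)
The plan is to treat the corollary as a packaging of results already established and to supply a single example that witnesses both strict inclusions at once. First I would dispose of the (easy) containments: the remark opening this section records that the restriction to $\MLR$ of a layerwise computable function is Weihrauch-reducible to $\LAY$, and Theorem~\ref{thm-LWSDareLAY} gives the same for the characteristic function of any layerwise semi-decidable set. So the only real work is separating the classes.

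Second, for the separating example I would take a non-empty $\Pi^0_1$ class $\MP T \subseteq \MLR$; concretely $\MP T = 2^\omega \setminus \MP{U}_1$ for any universal ML-test $\MP U$ already works, since $X \notin \MP{U}_1$ implies $X \notin \bigcap_{n \in \omega}\MP{U}_n = \overline\MLR$, and $\MP T$ is non-empty because $\lambda(\MP{U}_1) \leq 1/2$. The third step is the upper bound: $\MP T$ is $\Pi^0_1$, hence in particular $\Delta^0_2$, so Theorem~\ref{thm-Delta02LAY} gives $\chi_{\MP T}\restriction\MLR \leqW \LAY$. The fourth step is the non-membership: $\lambda(\MP T)$ is the measure of a non-empty $\Pi^0_1$ class of ML-random reals, hence is ML-random and in particular not computable by \cite[Theorem~3.2.35]{Nies:book}; then, exactly as recalled in Remark~\ref{rmk-ClosedNotLWSD}, $\MP T$ cannot be layerwise semi-decidable, for otherwise --- $2^\omega \setminus \MP T$ being effectively open and hence layerwise semi-decidable --- $\MP T$ would be layerwise decidable, which would force $\lambda(\MP T)$ to be computable. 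A fortiori $\MP T$ is not layerwise decidable, so $\chi_{\MP T}$ is not layerwise computable. Hence $\chi_{\MP T}$ witnesses the first strict inclusion and $\MP T$ witnesses the second, and together they yield the final sentence.

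I do not expect a genuine obstacle here; the substantive content sits in Theorem~\ref{thm-Delta02LAY} and in the measure computation recalled in Remark~\ref{rmk-ClosedNotLWSD}. The one point I would be careful about is the intended scope of ``layerwise (semi-)decidable'': I would read it, as throughout, with respect to the optimal ML-test underlying the Hoyrup and Rojas definition, so that Theorem~\ref{thm-H&R} lets me replace ``not layerwise decidable'' by ``not effectively $\lambda$-measurable'' and the non-computability of $\lambda(\MP T)$ settles the matter. If one instead wants to rule out $\MP W$-layerwise semi-decidability for \emph{every} universal ML-test $\MP W$, the same measure-approximation argument applies verbatim, since $\MP W$-layerwise decidability of a set already forces that set's measure to be computable.
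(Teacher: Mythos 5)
Your proposal is correct and follows essentially the same route as the paper: both reduce the strictness to the example of a non-empty $\Pi^0_1$ class $\MP T \subseteq \MLR$, apply Theorem~\ref{thm-Delta02LAY} for $\chi_{\MP T}\restriction\MLR \leqW \LAY$, and rule out layerwise semi-decidability via the non-computability of $\lambda(\MP T)$ together with the Hoyrup--Rojas measure characterization, exactly as in Remark~\ref{rmk-ClosedNotLWSD}. You merely spell out the details of that remark, which the paper's proof cites directly.
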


\begin{proof}
It is easy to see that if $f$ is layerwise computable, then $f \restriction \MLR \leqW \LAY$.  Theorem~\ref{thm-LWSDareLAY} states that if $\MP A$ is layerwise semi-decidable, then $\chi_{\MP A} \restriction \MLR \leqW \LAY$.  For strictness, let $\MP T$ be an effectively closed yet not layerwise semi-decidable set as discussed in Remark~\ref{rmk-ClosedNotLWSD}.  Then $\chi_{\MP T}\restriction \MLR \leqW \LAY$ by Theorem~\ref{thm-Delta02LAY} because $\MP T$ is $\Delta^0_2$, but $\chi_{\MP T}$ is not layerwise computable and $\MP T$ is not layerwise semi-decidable.
\end{proof}

\begin{Remark}
Let $\MP U$ be an optimal ML-test.  Then by Theorem~\ref{thm-RDnotLWC} and Theorem~\ref{thm-LAYisRD}, $\rd_{\MP U}$ is another example of a function defined on $\MLR$ that is $\leqW \LAY$ but is not layerwise computable.
\end{Remark}

With the help of Theorem~\ref{thm-Delta02LAY} and the following (surely known) lemma, we also see that Weihrauch reducibility to $\LAY$ does not characterize exact layerwise computability.

\begin{Lemma}\label{lem-not_characterize2helper}
If $\MP T \subseteq 2^\omega$ is an effectively closed set of positive measure, then there is an effectively open set $\MP A \subseteq 2^\omega$ such that $\MP A \cap \MP T$ is not closed.
\end{Lemma}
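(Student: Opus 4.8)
The plan is to split according to whether $\MP T$ has nonempty interior and to write $\MP A$ down explicitly in each case; ``effectively open'' will cost nothing because in both cases $\MP A$ is obtained from $\MP T$ and one auxiliary finite string in a manifestly $\Sigma^0_1$ way, and we never need to compute that string---only know it exists. If $[\rho]\subseteq\MP T$ for some $\rho$, take $\MP A=\bigcup_{n\in\omega}[\rho^\smf 0^n1]$. Then $\MP A$ is effectively open, $\MP A\subseteq[\rho]\subseteq\MP T$ so $\MP A\cap\MP T=\MP A$, and $\rho^\smf 0^\omega\in[\rho]\subseteq\MP T$ is a limit point of $\MP A$ that does not lie in $\MP A$; hence $\MP A\cap\MP T$ is not closed.

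So assume $\MP T$ has empty interior; being closed, it is then nowhere dense. By the Lebesgue density theorem, $\lambda(\MP T)>0$ yields a string $\rho$ with $\lambda(\MP T\cap[\rho])>\tfrac12 2^{-|\rho|}$; fix such a $\rho$. Write $\widehat\sigma$ for the sibling of $\sigma$ (same length, last bit flipped), call $\tau$ \emph{dead} if $[\tau]\cap\MP T=\emptyset$ and \emph{alive} otherwise, and let $D=\{\tau\in 2^{<\omega}:\tau\text{ dead}\}$, a c.e.\ set. I would set
\[
\MP A=(2^\omega\setminus[\rho])\ \cup\ (2^\omega\setminus\MP T)\ \cup\ \bigcup\{\,[\widehat\tau]:\tau\in D\text{ and }\tau\succ\rho\,\},
\]
which is effectively open since $D$ is c.e.\ and $2^\omega\setminus\MP T$ is effectively open. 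The omitted limit point will be a ``maximal $\MP T$-density'' path $X$ above $\rho$: put $X\restriction|\rho|=\rho$, and having defined $X\restriction m$ with $m\ge|\rho|$ and $\lambda(\MP T\cap[X\restriction m])>\tfrac12 2^{-m}$, note that both children of $X\restriction m$ meet $\MP T$ in positive measure (a cylinder of $\MP T$-density exceeding $\tfrac12$ branches in the tree of $\MP T$), and let $X\restriction(m{+}1)$ be whichever child has larger $\MP T$-measure. The density then stays above $\tfrac12$ forever, so $X\in\MP T\cap[\rho]$ and $X\restriction m$ branches for every $m\ge|\rho|$. Consequently $X\notin\MP A$: branching at $X\restriction m$ means exactly that the non-$X$ child of $X\restriction m$ is alive, so no prefix of $X$ of length $>|\rho|$ is the sibling of a dead string.

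It remains to exhibit points of $\MP A\cap\MP T$ converging to $X$. Fix $m\ge|\rho|$ and let $\sigma$ be the non-$X$ child of $X\restriction m$, so $\sigma\succ\rho$ and $\sigma$ is alive; since $\MP T$ is nowhere dense, $[\sigma]\not\subseteq\MP T$, so there is a shortest $\tau'\succ\sigma$ that is dead. Its immediate predecessor is alive (being either $\sigma$ or a proper extension of $\sigma$ shorter than $\tau'$), which forces the sibling $\widehat{\tau'}$ to be alive as well; moreover $\widehat{\tau'}\succ\rho$ and $\widehat{\widehat{\tau'}}=\tau'\in D$, whence $[\widehat{\tau'}]\subseteq\MP A$. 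Any $Z\in[\widehat{\tau'}]\cap\MP T$ then lies in $\MP A\cap\MP T$ and satisfies $Z\restriction m=X\restriction m$. Letting $m\to\infty$ produces a sequence in $\MP A\cap\MP T$ converging to $X\notin\MP A$, so $\MP A\cap\MP T$ is not closed.

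The main obstacle is the very point flagged at the outset: an effectively open set is a $\Sigma^0_1$ object, whereas ``$[\tau]\cap\MP T\ne\emptyset$'' is only $\Pi^0_1$, so one cannot build $\MP A$ out of cylinders that have been \emph{verified} to meet $\MP T$. The device that gets around this is to fill $\MP A$ instead with the \emph{siblings} of the (enumerable) dead cylinders---which are automatically alive---while using positive measure only to locate the escaping path $X$, none of whose prefixes is ever named.
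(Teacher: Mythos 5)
Your proof is correct, but it takes a genuinely different route from the paper's. The paper reduces non-closedness to non-compactness --- it suffices that no finite union of cones of $\MP A$ covers $\MP A \cap \MP T$ --- and arranges this by a movable-marker enumeration that, for each length $n \geq n_0$, tries to keep in $\MP A$ one leftmost cone of length $n$ meeting $\MP T$, retracting to the next cone to the right whenever the current one is discovered to miss $\MP T$ or to be swallowed by a shorter cone; a measure budget $\sum_{n\ge n_0} 2^{-n} \le \lambda(\MP T)/2$ guarantees the markers never run out of room. You instead split on whether $\MP T$ has interior (where the claim is immediate) and, in the nowhere dense case, populate $\MP A$ with the siblings of the \emph{dead} cylinders --- a manifestly c.e.\ family --- and then exhibit an explicit point of $\overline{\MP A \cap \MP T} \setminus \MP A$ by following a maximal-$\MP T$-measure path from a cylinder of $\MP T$-density exceeding $1/2$. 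Both arguments must confront the same obstacle, namely that ``$[\tau]\cap\MP T\neq\emptyset$'' is only $\Pi^0_1$: the paper handles it dynamically by moving markers and tolerating chain reactions, while you handle it statically by enumerating only siblings of verified-dead cylinders, which are automatically alive whenever their parent is. Your version buys an explicit witness to non-closedness and avoids the bookkeeping of the retraction process, at the modest price of invoking the Lebesgue density theorem (which here could be replaced by the elementary fact that any set of positive measure has relative measure greater than $1/2$ in some dyadic cylinder). All the individual steps check out: the invariant $\lambda(\MP T\cap[X\restriction m])>\tfrac12 2^{-m}$ is preserved by the greedy choice of child, it forces both children of every prefix to be alive (so no prefix of $X$ is the sibling of a dead string extending $\rho$), and minimal dead extensions of alive strings always have alive siblings, which supplies the approximating points in $\MP A\cap\MP T$.
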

\begin{proof}
If $\MP A \cap \MP T$ were closed, then it would be compact, $\MP A$ would be an open cover of $\MP A \cap \MP T$, and, by compactness, $\MP A \cap \MP T$ would be covered by the union of some finite collection of cones from~$\MP A$.  Thus it suffices to construct $\MP A$ so that no finite union of cones from $\MP A$ covers $\MP A \cap \MP T$. 

Let $T \subseteq 2^{<\omega}$ be a computable tree such that $[T] = \MP T$.  Let $n_0$ be such that $2^{-n_0} \leq \lambda(\MP T)/4$.  
The goal is to enumerate into $\MP A$, for each $n \geq n_0$, the cone $[\sigma]$ for the leftmost $\sigma \in 2^n$ such that {\em (i)} $[\sigma] \cap \MP T \neq \emptyset$ and {\em (ii)} $[\sigma] \nsubseteq [\tau]$ for every shorter $\tau$ where $[\tau]$ was enumerated into  $\MP A$. Of course, as $\MP T$ shrinks, it could happen that we 
enumerated a $\sigma$ of length $n$ at some stage $s$ because we observed $[\sigma] \cap \MP T_s \neq \emptyset$; but that later it turns out that $[\sigma] \cap \MP T = \emptyset$, that is, that {\em (i)} is violated for~$\sigma$. In this case we need to enumerate a new string $\sigma'$ of length $n$ to replace~$\sigma$. Enumerating this  new $\sigma'$ risks violating condition {\em (ii)} for some longer $\tau$ that was previously enumerated (that is, we could have $[\tau]\subseteq[\sigma']$); in such cases we will also need to replace $\tau$ by a new $\tau'$ of the same length. Further chain reactions of the same type could occur, and need to be handled by us.

\smallskip

Let us make this more formal. For the purpose of this proof, for any $\sigma \in 2^{<\omega}$, let $\sigma^+$ denote the string immediately to the right of and of equal length as $\sigma$. At stage $s = \la i, t \ra$, first check whether in the construction so far a cone $[\sigma]$ for a $\sigma$ of length $n_0+i$ has been enumerated into $\MP A$. If not, let $\sigma$ be leftmost in $2^{n_0+i}$ such that $[\sigma] \nsubseteq [\tau]$ for all $\tau$ with $[\tau] \subseteq \MP A$, and enumerate $[\sigma]$ into $\MP A$. 

 %If on the other hand there is a $\sigma \in 2^{n_0+i}$ with $[\sigma] \in \MP A$, 
 
 If yes, then choose the rightmost such $\sigma$.  Now check {\em (1)} whether $\sigma \succ \tau$ for a $\tau$ with $[\tau] \subseteq \MP A$ and {\em (2)} whether there is an extension of $\sigma$ of length $s$ in $T$.  If {\em (1)} fails and {\em (2)} holds, then~$\sigma$ is still the leftmost string of length $n_0+i$ meeting conditions {\em (i)} and {\em (ii)} at this stage, so do nothing. Otherwise, enumerate $[\sigma^+]$ into~$\MP A$.

A new cone $[\sigma^+]$ is enumerated into $\MP A$ only if either $[\sigma]$ was covered by a $[\tau] \subseteq \MP A$ for a shorter~$\tau$ or if we discover that $[\sigma] \cap \MP T = \emptyset$.  Thus at every stage, for each $n \geq n_0$, $\MP A$ contains at most one cone $[\sigma]$ such that $\sigma \in 2^n$, $[\sigma] \cap \MP T \neq \emptyset$, and $[\sigma] \nsubseteq [\tau]$ for all shorter $\tau$ with $[\tau] \in \MP A$.  Thus, at each stage, $\lambda(\MP A \cap \MP T) \leq \sum_{n \geq n_0}2^{-n} = 2^{-n+1} \leq \lambda(\MP T)/2$.  From this it follows that the construction of $\MP A$ is well-defined (it is always possible to find the required $[\sigma]$'s to enumerate into~$\MP A$) because $\MP A$ never covers $\MP T$.  Similarly, we see that no finite union of cones from $\MP A$ covers~$\MP A \cap \MP T$:  for any finite set of cones $\{[\tau_i] : i < m\}$ enumerated into $\MP A$, $\MP A$ eventually enumerates a cone $[\sigma]$ such that $\sigma$ is to the right of each of the $\tau_i$ and $[\sigma] \cap \MP T \neq \emptyset$.
\end{proof}

\begin{Theorem}\label{thm-not_characterize2}
Fix a universal ML-test $\MP U$.  Then the class of exactly $\MP U$-layerwise computable functions is strictly smaller than the class of functions whose restrictions to $\MLR$ are Weihrauch-reducible to $\LAY$.  Hence Weihrauch reducibility to $\LAY$ does not characterize exact $\MP U$-layerwise computability.
\end{Theorem}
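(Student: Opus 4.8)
Here is how I would approach Theorem~\ref{thm-not_characterize2}.

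The plan has two parts: the (easy) inclusion, and the production of a single witnessing function separating the two classes, namely the characteristic function of a suitably chosen effectively open set. For the inclusion: if $f$ is exactly $\MP U$-layerwise computable via a functional $\Phi$, then $f(X) = \Phi(\la X, \rd_{\MP U}(X) \ra)$ for every $X \in \MLR$. Since $\RD_{\MP U} \equivW \LAY$ by Theorem~\ref{thm-LAYisRD}, it suffices to observe that $f \restriction \MLR \leqW \RD_{\MP U}$: take the forward functional to be the identity and let the backward functional, on input $\la X, n \ra$ with $n = \rd_{\MP U}(X)$, output $\Phi(\la X, n \ra)$. Hence $f \restriction \MLR \leqW \LAY$ whenever $f$ is exactly $\MP U$-layerwise computable.

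For the witness, the idea is to build an effectively open set $\MP A$ that is \emph{not} exactly $\MP U$-layerwise decidable; then $F = \chi_{\MP A}$ satisfies $F \restriction \MLR \leqW \LAY$ by Remark~\ref{rmk-ClosedNotLWSD} (or by Theorem~\ref{thm-Delta02LAY}, since $\MP A$ is $\Delta^0_2$), while $F$ lies outside the class of exactly $\MP U$-layerwise computable functions. To obtain $\MP A$, for each $d$ set $L_d = \{X \in \MLR : \rd_{\MP U}(X) = d\}$. The sets $(L_d)_{d \in \omega}$ partition $\MLR$, so $\sum_d \lambda(L_d) = \lambda(\MLR) = 1$, and hence $\lambda(L_{d_0}) > 0$ for some $d_0$. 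Now $L_{d_0} = (2^\omega \setminus \MP{U}_{d_0}) \cap \bigcap_{i < d_0} \MP{U}_i$ is the intersection of an effectively closed set with the effectively open set $V := \bigcap_{i < d_0} \MP{U}_i$; choosing a clopen inner approximation $G \subseteq V$ with $\lambda((2^\omega \setminus \MP{U}_{d_0}) \cap G) > 0$ (possible because the clopen approximations to $V$ increase to $V$), we obtain an effectively closed set $\MP T := (2^\omega \setminus \MP{U}_{d_0}) \cap G$ of positive measure with $\MP T \subseteq L_{d_0}$. (When $d_0 = 0$ no approximation is needed: $L_0 = 2^\omega \setminus \MP{U}_0$ is already effectively closed, and we may take $\MP T = L_0$.) Then Lemma~\ref{lem-not_characterize2helper} applied to $\MP T$ yields an effectively open $\MP A$ for which $\MP A \cap \MP T$ is not closed.

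To finish, I would argue by contradiction. Suppose $\chi_{\MP A}$ were exactly $\MP U$-layerwise computable via $\Phi$. With $d_0$ fixed, put $\MP P = \{X : \Phi(\la X, d_0 \ra)\da = 1\}$ and $\MP Q = \{X : \Phi(\la X, d_0 \ra)\da = 0\}$; these are disjoint effectively open sets. Every $X \in \MP T$ has $\rd_{\MP U}(X) = d_0$, so $\Phi(\la X, d_0 \ra) = \chi_{\MP A}(X)$; consequently $\MP T \subseteq \MP P \cup \MP Q$ and $\MP A \cap \MP T = \MP P \cap \MP T$. Since $\MP P$ and $\MP Q$ are disjoint and cover $\MP T$, we get $\MP P \cap \MP T = \MP T \setminus \MP Q = \MP T \cap (2^\omega \setminus \MP Q)$, an intersection of two closed sets, hence closed --- contradicting the choice of $\MP A$. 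Thus $\chi_{\MP A}$ is not exactly $\MP U$-layerwise computable, and combined with the inclusion above this gives the strictness, hence also the final sentence of the theorem.

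The step I expect to be the main obstacle is the construction of the effectively closed, positive-measure set $\MP T$ contained in a \emph{single} randomness-deficiency layer $L_{d_0}$: one has to notice that some layer carries positive measure and then carve an effectively closed, positive-measure piece out of the ``closed $\cap$ open'' set $L_{d_0}$ without losing all of its measure. Once this is in place, the ``disjoint open cover'' observation (which forces $\MP P \cap \MP T$ to be closed as soon as $\MP T$ lies in one layer) together with Lemma~\ref{lem-not_characterize2helper} delivers the contradiction with essentially no further work.
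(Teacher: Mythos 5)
Your proposal is correct and follows essentially the same route as the paper: Lemma~\ref{lem-not_characterize2helper} applied to a positive-measure effectively closed set contained in a single randomness-deficiency layer, followed by the observation that an exact layerwise decision procedure would split that set into two relatively clopen pieces, forcing $\MP A \cap \MP T$ to be closed. The only differences are cosmetic: the paper obtains the single-layer closed set for free by taking $\MP{T}_i = 2^\omega \setminus \MP{U}_i$ for the \emph{least} $i$ with $\MP{U}_i \neq 2^\omega$ (so all lower layers are empty and $\lambda(\MP{T}_i) \geq 1 - 2^{-i} > 0$), which bypasses your positive-measure-layer and clopen inner-approximation step, and it uses $\chi_{\MP A \cap \MP{T}_i}$ rather than $\chi_{\MP A}$ as the witness; both variants work.
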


\begin{proof}
It is easy to see that the restriction to $\MLR$ of any exactly layerwise-computable function is  Weihrauch-reducible to $\RD$. Then, by appealing to the fact that $\RD \equivW \LAY$ from Theorem~\ref{thm-LAYisRD}, we see that if $f$ is exactly $\MP U$-layerwise computable, then $f \restriction \MLR \leqW \LAY$.

For strictness, we show that there is a set $\MP B \subseteq 2^\omega$ such that $\chi_{\MP B}$ is not exactly $\MP U$-layerwise computable yet satisfies $\chi_{\MP B}\restriction\MLR \leqW \LAY$.  Let $i$ be least such that $\MP{T}_i = 2^\omega \setminus \MP{U}_i \neq \emptyset$, and note that $(\forall X \in \MP{T}_i)(\rd(X) = i)$.  By Lemma~\ref{lem-not_characterize2helper}, let $\MP A$ be an effectively open set such that $\MP B = \MP A \cap \MP{T}_i$ is not closed. 

Then $\chi_{\MP B}$ is not exactly $\MP U$-layerwise computable, but $\chi_{\MP B}\restriction\MLR \leqW \LAY$: Theorem~\ref{thm-Delta02LAY} tells us that $\chi_{\MP B}\restriction\MLR \leqW \LAY$ because $\MP B$ is $\Delta^0_2$.  Suppose for a contradiction that $\Phi$ witness that $\chi_{\MP B}$ is exactly $\MP U$-layerwise computable.  Let $\MP S = \{X \in \MP{T}_i : \Phi(X)(i) \neq 0\}$.  Then $\MP S$ is closed.  Moreover, $\MP S = \MP B$.  This is because if $X \in \MP{T}_i$, then $\rd(X) = i$, so $\Phi(X)(i) \neq 0$ if and only if $\Phi(X)(i) = 1$ if and only if $X \in \MP B$.  This contradicts that $\MP B$ is not closed.
\end{proof}

\section*{Acknowledgements}
The authors would like to thank Vasco Brattka, Guido Gherardi, and Arno Pauly for helpful discussions on some of the topics of this article.

\bibliographystyle{plain}
\bibliography{HoelzlShafer_UniversalityOptimalityRandomnessDeficiency}

\end{document}